\documentclass[12pt,letterpaper]{amsart}
\usepackage[utf8]{inputenc}
\usepackage{mathtools}
\usepackage{amssymb, amsmath, enumerate, amsthm, cite, colonequals, stmaryrd, lipsum, graphicx, fancyvrb,fancyhdr, indentfirst, times, setspace}
\usepackage[margin=1in]{geometry}
\usepackage{mathtools}
\makeatletter
\@namedef{subjclassname@2020}{\textup{2020} Mathematics Subject Classification}
\makeatother

\usepackage[colorlinks]{hyperref}
\hypersetup{
 colorlinks=true,
 linkcolor=blue,
 filecolor=magenta, 
 urlcolor=cyan,
}

\usepackage[capitalise]{cleveref}
\crefformat{equation}{(#2#1#3)}
\crefrangeformat{equation}{(#3#1#4--#5#2#6)}
\crefformat{enumi}{(#2#1#3)}
\crefrangeformat{enumi}{(#3#1#4--#5#2#6)}

\usepackage[pagewise]{lineno}
\let\oldequation\equation
\let\oldendequation\endequation
\renewenvironment{equation}{\linenomathNonumbers\oldequation}{\oldendequation\endlinenomath}
\expandafter\let\expandafter\oldequationstar\csname equation*\endcsname
\expandafter\let\expandafter\oldendequationstar\csname endequation*\endcsname
\renewenvironment{equation*}{\linenomathNonumbers\oldequationstar}{\oldendequationstar\endlinenomath}
\let\oldalign\align
\let\oldendalign\endalign
\renewenvironment{align}{\linenomathNonumbers\oldalign}{\oldendalign\endlinenomath}
\expandafter\let\expandafter\oldalignstar\csname align*\endcsname
\expandafter\let\expandafter\oldendalignstar\csname endalign*\endcsname
\renewenvironment{align*}{\linenomathNonumbers\oldalignstar}{\oldendalignstar\endlinenomath}

\usepackage{tikz-cd}




\newcounter{intro}
\newcounter{result}

\newtheorem{introthm}[intro]{Theorem}

\theoremstyle{definition}
\newtheorem{theorem}[result]{Theorem}
\newtheorem{lemma}[result]{Lemma}

\newtheorem{proposition}[result]{Proposition}
\newtheorem{corollary}[result]{Corollary}
\newtheorem{definition}[result]{Definition}

\newtheorem{remark}[result]{Remark}

\newtheorem{question}[result]{Question}

\numberwithin{equation}{subsection}

\title[A note on stability conditions on product-quotient surfaces]{A note on stability conditions on product-quotient surfaces}

\author[U.~Dutta]{Uttaran Dutta}
\address{Department of Mathematics, University of South Carolina}
\email{udutta@email.sc.edu}

\date{\today}

\definecolor{bleudefrance}{rgb}{0.19, 0.55, 0.91}

\def\P{\mathbb{P}}
\def\C{\mathbb{C}}

\def\Q{\mathbb{Q}}

\def\L{\mathbb{L}}
\def\H{\mathbb{H}}
\def\Z{\mathbb{Z}}
\def\ZZ{\mathcal{Z}}

\def\TT{\mathcal{T}}
\def\PP{\mathcal{P}}
\def\UU{\mathcal{U}}
\def\R{\mathbb{R}}
\def\OO{\mathcal{O}}
\def\FF{\mathcal{F}}
\def\AA{\mathcal{A}}

\def\OO{\mathcal{O}}

\DeclareMathOperator{\Hilb}{Hilb}

\DeclareMathOperator{\Forg}{Forg}

\DeclareMathOperator{\ch}{ch}
\DeclareMathOperator{\Alb}{Alb}
\DeclareMathOperator{\Db}{\mathrm{D}^b}
\DeclareMathOperator{\Dbg}{\mathrm{D}_G^b}
\DeclareMathOperator{\Ubg}{\UU_{\mathrm{BG}}}
\DeclareMathOperator{\GHilb}{G\text{-}\!\Hilb}

\DeclareMathOperator{\Aut}{Aut}
\DeclareMathOperator{\rk}{rk}
\DeclareMathOperator{\Stabgeo}{\Stab^{\mathrm{geo}}}

\DeclareMathOperator{\pr}{pr}
\DeclareMathOperator{\Hom}{Hom}

\DeclareMathOperator{\Pic}{Pic}

\DeclareMathOperator{\NS}{NS}

\DeclareMathOperator{\sgn}{sgn}

\DeclareMathOperator{\Coh}{Coh}
\DeclareMathOperator{\sHom}{\mathcal{H}\kern -.5pt\mathit{om}}
\DeclareMathOperator{\sTor}{\mathcal{T}\kern -1.5pt\mathit{or}}
\DeclareMathOperator{\GL}{GL}

\DeclareMathOperator{\SL}{SL}

\DeclareMathOperator{\Stab}{Stab}

\DeclareMathOperator{\td}{td}

\DeclareMathOperator{\Knum}{K_{\mathrm{num}}}

\begin{document}

\maketitle
\begin{abstract}
Let $X$ be a product-quotient surface, which is by definition a minimal resolution of $(C_1\times C_2)/G$, where $C_i$ are smooth curves of genus $\geq 2$ and $G$ is a finite group. When $G$ acts freely, we show that every product Bridgeland stability condition on $C_1\times C_2$ induces a tilt stability condition (\S \ref{subsection:tilting}) on $X$, and that, if $p_g(X)=q(X)=0$, every tilt stability condition on $X$ arises this way. When $(C_1\times C_2)/G$ has only $A_1$-singularities, we use induction \cite{MMS} and derived McKay correspondence \cite{BKR} to produce non-geometric stability conditions $\sigma_X$ on $X$, which satisfy the full support property and lie in the closure of the geometric chamber. Whether $\sigma_X$ also lies in the closure of the $\widetilde{\mathrm{GL}}^+(2,\mathbb{R})$-orbit of the tilt stability conditions depends on a single inequality, $\omega_X^2 \ge n/8$, where $n$ is the number of $A_1$-singularities. When it fails, the geometric chamber is strictly larger than this orbit.
\end{abstract}
\setcounter{tocdepth}{1}
\numberwithin{result}{section}
\section{Introduction}
Let $X$ be a smooth projective surface. The space $\Stab(X)$ of Bridgeland stability conditions is fully understood in very few cases. The work \cite{Dell} describes the set of geometric stability conditions $\Stabgeo(X).$ Thus the situation $\Stab(X)=\Stabgeo(X)$ is very desirable. Fu--Li--Zhao \cite{Fu-Li-Zhao} proved that this happens whenever $X$ has finite Albanese morphism. Following \cite{Dell}, we further explore the question: if $X$ has non-finite Albanese morphism, does there always exist a non-geometric stability condition on $X$?

Suppose a surface has finite Albanese morphism $\pi:X\to\Alb(X).$ The image has two linearly independent local $1$-forms, which can be extended to global $1$-forms $w_1,w_2$, since $\Alb(X)$ is an abelian variety. Then $\pi^*w_1\wedge\pi^*w_2$ is a global $2$-form on $X,$ which implies the canonical sheaf of $X$ has a section, i.e. $p_g\neq 0.$ Therefore, surfaces with $p_g=0$ have non-finite Albanese morphism. This brings our attention to a big class of general type surfaces, called \emph{product-quotient} surfaces. These are minimal resolutions of $(C_1\times C_2)/G$, where $C_i$ are higher genus curves and $G$ is a finite group acting faithfully on them. In this paper, we follow the classification of product-quotient surfaces with $p_g=q=0$ \cite{BP12}. 

When $G$ acts freely on $C_1\times C_2$, the equivariant derived category $\Dbg(C_1\times C_2)$ is equivalent to $\Db(X).$ The product $C_1\times C_2$ carries Liu's product stability conditions \cite{Liu-product}, which induces a stability condition on $\Db(X)$ \cite{MMS}. It is then natural to ask where the induced stability condition lands. The answer is that they are not new points in $\Stab(X).$ For $\omega,B\in\NS(X)_\R$, with $\omega$ ample, denote by $\sigma_{\omega,B}$ the tilt stability condition \cite{Bri-K3,AB13}.

\begin{introthm} (Theorem \ref{Thm:free case})
    Let $G$ act freely on $C_1\times C_2$. Every product stability condition on $C_1\times C_2$ is $G$-invariant and the induced stability condition on $\Db(X)$ is $\sigma_{\omega,B}$ for some $\omega,B\in\NS(X)_\R$, with $\omega$ ample. If $p_g=q=0,$ every tilt stability condition on $X$ arises in this way up to the $\C$-action.
\end{introthm}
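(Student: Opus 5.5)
We first describe Liu's product stability conditions on $S:=C_1\times C_2$, show they are $G$-invariant, and then read off the induced stability condition on $\Db(X)\simeq\Dbg(S)$ via \cite{MMS}.

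\textbf{Product stability conditions.} For curves of genus $\ge2$, $\Stab(C_i)$ is a single $\widetilde{\GL}^+(2,\R)$-orbit of the standard condition, with central charge $-\deg+i\rk$. As we recall Liu's construction, a product stability condition on $S$ is determined up to the $\widetilde{\GL}^+(2,\R)$-action by two pairs $(\omega_i,B_i)$ of real numbers with $\omega_i>0$. It coincides with the tilt stability condition $\sigma_{\omega,B}$ on $S$, where $\omega=\omega_1 f_1+\omega_2 f_2$ and $B=B_1f_1+B_2f_2$. Here $f_1,f_2$ are the fibre classes and $\omega_1f_1+\omega_2f_2$ is ample. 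Liu's result identifies the heart as a tilt of $\Coh(S)$ and the central charge as $-\int e^{-(B+i\omega)}\ch$, possibly after a $\C$-action.

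\textbf{$G$-invariance.} $G$ acts diagonally on $C_1\times C_2$, so each $g$ preserves the fibre classes $f_1,f_2$. Hence $g^*\omega=\omega$ and $g^*B=B$. Pullback by an automorphism preserves slope stability with respect to the invariant class $\omega$, so it preserves the torsion pair, the tilted heart, and the central charge. Thus $g^*\sigma=\sigma$.

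\textbf{Induced condition.} By \cite{MMS}, a $G$-invariant $\sigma=(Z,\AA)$ induces $\sigma_G$ on $\Dbg(S)$. Its heart consists of the equivariant objects whose underlying object lies in $\AA$, and its central charge is $Z\circ\Forg$. Write $\pi:S\to X=S/G$ for the étale quotient; then $\Forg$ becomes $\pi^*$. Because $\omega$ and $B$ are $G$-invariant, they descend: $\omega=\pi^*\omega_X$ and $B=\pi^*B_X$ for some $\omega_X,B_X\in\NS(X)_\R$, with $\omega_X$ ample since $\pi$ is finite. Using $\ch(\pi^*E)=\pi^*\ch(E)$ and $\int_S\pi^*\alpha=|G|\int_X\alpha$, we get $Z(\pi^*E)=|G|\,Z_{\omega_X,B_X}(E)$. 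The factor $|G|$ is absorbed by the $\C$-action. For the hearts, $\pi^*$ preserves and reflects $\mu_{\omega}$-semistability: the Harder--Narasimhan filtration of $\pi^*E$ is $G$-invariant, hence descends, and $\mu_{\pi^*\omega}(\pi^*E)=\mu_{\omega_X}(E)$. So $E\in\Coh^{\omega_X,B_X}(X)$ if and only if $\pi^*E\in\Coh^{\omega,B}(S)$. Therefore $\sigma_G=\sigma_{\omega_X,B_X}$ up to the $\C$-action.

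\textbf{Converse when $p_g=q=0$.} This is the main obstacle: we must show that $\NS(X)_\R$ is spanned by the images of $f_1,f_2$. When $G$ acts freely and $p_g=q=0$, $X$ is a surface isogenous to a product with $\chi(\OO_X)=1$, $e(X)=4\chi=4$ and $b_1=0$, so $b_2=2$. Since $p_g=0$, all of $H^2$ is algebraic, and $\rho(X)=2$. The descended fibre classes $F_1,F_2$ satisfy $F_1F_2=|G|^{-1}(f_1f_2)\cdot$ (a positive constant) $\ne0$ and $F_i^2=0$. They are therefore linearly independent and span $\NS(X)_\R$. Any ample $\omega_X=a_1F_1+a_2F_2$ has $a_i>0$, because its intersection with the nef classes $F_2$ and $F_1$ is positive. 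Any $B_X$ is also a combination of $F_1,F_2$. Pulling back gives invariant $(\omega,B)$ of product type, hence a product stability condition inducing $\sigma_{\omega_X,B_X}$.

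The delicate points I expect to check are Liu's exact normalization matching tilt stability, and the descent of HN filtrations. The latter follows from the uniqueness of HN filtrations.
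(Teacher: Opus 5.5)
Your argument is correct and follows the paper's skeleton: product $=$ tilt on $S$, $G$-invariance, identification of the induced condition, then $\rho(X)=2$ for the converse. The difference lies in how you carry out the two identification steps.

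The paper never describes a heart. It computes the product central charge from the box-product formula, getting $-\alpha_1\alpha_2 Z_{\omega,B}$ with $B+i\omega=\gamma_1f_1+\gamma_2f_2$. This computation is what justifies your ``possibly after a $\C$-action''; note also that on a curve the parameters $(B_i,\omega_i)$ are defined modulo $\C$, not modulo $\widetilde{\GL}^+(2,\R)$. The paper then invokes Bridgeland's uniqueness statement, Prop.~\ref{geometric-stab}: central charge $Z_{\omega,B}$ together with all $\OO_x$ stable of phase $1$ forces $\sigma=\sigma_{\omega,B}$.

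For the induced condition the paper again uses only the central charge (projection formula) and stability of skyscrapers. That stability holds because $\pi^*\OO_x=\bigoplus_{p\in\pi^{-1}(x)}\OO_p$ is a sum of simple objects of $\PP(1)$ indexed by a single free $G$-orbit, so no proper nonzero invariant subobject exists.

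You instead take the shape of the product heart from Liu and compare hearts directly. You descend $G$-invariant Harder--Narasimhan filtrations along the \'etale cover to get $E\in\Coh^{\omega_X,B_X}(X)\iff\pi^*E\in\Coh^{\omega,B}(S)$.

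\begin{itemize}
\item Your route gives an explicit description of the induced heart and needs no uniqueness theorem. However, its first step rests on the exact form of Liu's heart, which is the point you flag.
\item The paper's route needs only the central-charge formula and the fact that product conditions have stable skyscrapers of a common phase. Its free-orbit argument is also reused essentially verbatim in the non-free case (Theorem~\ref{maintheorem}).
\end{itemize}

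Finally, your check that an ample $\omega_X=a_1F_1+a_2F_2$ has $a_1,a_2>0$ is a genuine addition. It ensures $\omega_X$ really comes from some $\gamma_i\in\mathbb{H}$, a step the paper leaves implicit.
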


When $G$ acts on $C_1\times C_2$ with fixed points, the quotient only has canonical singularities, and \cite{BKR} gives the equivalence $\Phi:\Db(X)\xrightarrow{\sim}\Dbg(C_1\times C_2)$ (Lemma \ref{lem:BKR equivalence PQ}). An induced stability condition is not geometric, but is a limit of geometric stability conditions, and there is a single numerical invariant that decides whether it is also the limit of tilt stability conditions. We prove the following general statement. 

\begin{introthm}\label{thm:B} (Theorem \ref{maintheorem}, Theorem \ref{thm:maintheorem2})
    Let $Y$ be a smooth projective surface, $G$ a finite group acting on $Y$, such that the canonical bundle of $Y$ is locally trivial as a $G$-sheaf and $Y/G$ has only $A_1$-singularities. Set $\pi_X:X\to Y/G$ to be the minimal resolution. For $\overline{\omega},\overline{B}\in\NS(Y)_\R$, $\overline{\omega}$ ample, the tilt stability condition $\sigma_{\omega,B}$ is $G$-invariant, where $\omega,B$ are pullbacks of $\overline{\omega},\overline{B}$ via the quotient $Y\to Y/G$. Then induction \cite{MMS}, followed by \cite{BKR} produces a stability condition $\sigma_X\in\Stab(X),$ such that,
    \begin{enumerate}
        \item $\sigma_X$ is non-geometric, but $\sigma_X\in \overline{\Stabgeo(X)}.$
        \item $\sigma_X$ lies in the closure of $\Ubg=\{\sigma_{\omega,B}\cdot g;\ \omega,B\in\NS(X)_\R,\text{ $\omega$ ample, }g\in\widetilde{\GL}_2^+(\R)\}$ if and only if $\omega_X^2\geq n/8,$ where $\omega_X=\pi_X^*\overline{\omega}$ and $n=\#$ of $A_1$-singularities of $Y/G.$ 
    \end{enumerate}
\end{introthm}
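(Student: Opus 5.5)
We first check $G$-invariance, then construct $\sigma_X$ via \cite{MMS} and the equivalence $\Phi$, and finally read off its central charge on $X$; both (1) and (2) are then comparisons of that central charge with those of tilt stability conditions.

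\textbf{$G$-invariance and construction.} Since $\omega,B$ are pullbacks from $Y/G$, they are $G$-invariant classes. Each $g\in G$ acts by an autoequivalence of $\Db(Y)$ that preserves $\Coh(Y)$, $\ch$, and slope with respect to $\omega$. Hence $g$ preserves the torsion pair defining the tilted heart and the central charge $Z_{\omega,B}(E)=-\int e^{-(B+i\omega)}\ch(E)$, so $g^*\sigma_{\omega,B}=\sigma_{\omega,B}$. By \cite{MMS}, the forgetful functor then induces a stability condition $\sigma_G$ on $\Dbg(Y)$. Its heart consists of $G$-equivariant objects whose underlying object lies in the tilted heart, and $Z_G=Z_{\omega,B}\circ\Forg$. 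Set $\sigma_X=\Phi^{-1}\sigma_G$ using the BKR equivalence $\Phi$. The full support property is inherited from the support property of $\sigma_{\omega,B}$.

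\textbf{Computing the central charge on $X$.} For $E\in\Db(X)$ we need $\ch(\Forg\Phi(E))$ in terms of $\ch(E)$. Let $C_1,\dots,C_n$ be the exceptional $(-2)$-curves. The derived McKay correspondence at an $A_1$ point gives $\Phi(\OO_{C_j}(-1))[1]$ equal to a skyscraper at the fixed point twisted by the nontrivial character. Its forgetful image is a skyscraper, so its charge is $-1$. Away from the singular locus, $\Phi$ is pullback along the étale cover. Using Riemann--Roch, we then expect
\[
Z_X(E)=|G|\left(-\int_X e^{-(B_X+i\omega_X)}\ch(E) + (\text{correction supported on the }C_j)\right).
\]
Here the correction is chosen so that $\OO_x$ for $x\in C_j$ has the same charge as a generic point, while $Z(\OO_{C_j}(-1))=0$ up to sign and $Z(\OO_{C_j}(-2))\neq0$. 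We will determine the correction term by term, by evaluating on $\OO_X$, $\OO_x$, and the sheaves $\OO_{C_j}(k)$.

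\textbf{Part (1).} Non-geometricity follows because the skyscrapers $\OO_x$ with $x\in C_j$ are destabilized. Indeed, $\Phi(\OO_x)$ for such $x$ is an extension of the two irreducible characters' skyscrapers, and the piece $\OO_{C_j}(-1)[1]$, or its counterpart in the heart, has phase $1$ with charge of modulus at most that of $\OO_x$. Since all skyscrapers have charge $-|G|$, there is a destabilizing subobject of equal phase, so $\OO_x$ is strictly semistable rather than stable. For $\sigma_X\in\overline{\Stabgeo(X)}$, we use the description in \cite{Dell} of geometric stability conditions via pairs $(\omega,B)$ and a function bounding $\ch_2$. Consider the path $\omega_t=\omega_X-t\sum C_j$, small and ample for $t>0$, with $B_t\to B_X+\tfrac12\sum C_j$ chosen so that $Z(\OO_{C_j}(-1))\to0$. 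Then apply the local-finiteness and deformation results of \cite{Dell} to get a path in $\Stabgeo(X)$ converging to $\sigma_X$.

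\textbf{Part (2).} We must decide when $Z_X$ is a limit of $g\cdot Z_{\omega',B'}$ in such a way that the hearts also converge. The $(-2)$-curve classes force the limit to have $\omega'\to\omega_X$, which is nef but not ample, and $B'\to B_X+\tfrac12\sum C_j$. The remaining freedom is a $\GL_2^+$ rescaling. Matching $\ch_2$ coefficients forces a quadratic term: after normalizing, the extra constant in $Z_X$ relative to tilt is $\tfrac18\sum C_j^2\cdot(-1)=n/4$ in the $\tfrac12\omega^2$ slot. Thus the limit exists iff the real-part term $\tfrac12(\omega'^2-\ldots)$ can absorb it, which amounts to $\omega_X^2\ge n/8$. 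For necessity, we would test on $\OO_X$ (or a line bundle): Bogomolov-type positivity in $\overline{\Ubg}$ requires $Z(\OO_X)$ to lie in the correct half-plane, which yields the inequality. \textbf{Main obstacle:} getting the exact correction term in $Z_X$ and proving that the hearts, not just the central charges, converge in the "if" direction. For that step I would first try the explicit path above and verify the support property uniformly along it.
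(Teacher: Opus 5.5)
Your $G$-invariance argument, the construction via \cite{MMS}, and the reason $\OO_x$ is strictly semistable for $x\in E_j$ (namely that $\Phi(\OO_x)$ is an extension of two equivariant structures on $\OO_{P_j}$) all match the paper. The rest has genuine gaps, starting with the central charge: you never compute it, and the features you guess are wrong. With $\Phi=\R q_*p^*$ one has $\Phi(\OO_{E_j}(-1))=\OO_{P_j}\otimes\sgn$ and $\Phi(\OO_{E_j}(-2)[1])=\OO_{P_j}$, where $P_j$ is an orbit of $|G|/2$ points. So both Jordan--H\"older factors have charge $-\tfrac12$ after dividing by $|G|$; in particular $Z(\OO_{E_j}(-1))\neq 0$. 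The paper gets $Z_X(F)=Z_{\omega_X,B_X}(F)+\tfrac14\sum_j c_1(F)\cdot E_j=Z_{\omega_X,B^\sharp}(F)-\tfrac{n}{16}\rk F$, with $B^\sharp=B_X+\tfrac14\sum_jE_j$ (Lemma~\ref{lem:GRR}). This comes from Grothendieck--Riemann--Roch for the blow-up $q$, using $\td_q^{(1)}=-\tfrac12\sum_pe_p$, $\int\td_q^{(2)}=0$ and $p_*e_p=E_j$. Your test objects $\OO_X,\OO_x,\OO_{C_j}(k)$ do not span $\Knum(X)_\R$, so they could not determine this correction. Consequently your limiting data are off. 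The relevant $B$-field has $B^\sharp\cdot E_j=-\tfrac12$, not $B_X+\tfrac12\sum_jC_j$. A path along which the charge of $\OO_{C_j}(-1)$ tends to $0$ cannot converge to $\sigma_X$. The constant in the $\tfrac12\omega^2$ slot is $-\tfrac{n}{16}$, which is what yields $n/8$; your $n/4$ is inconsistent with your own conclusion.

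Second, the full support property is not inherited from $Y$. The map $\Forg\circ\Phi$ kills the nonzero class $[\OO_{E_j}(-1)]+[\OO_{E_j}(-2)]$, which lies in $\ker Z_X$, so the quadratic form pulled back from $Y$ is degenerate on $\ker Z_X$. The paper instead transports a mass-Hom bound from $\sigma_{\omega,B}$, using $\dim\Hom_X(M,N)\le\dim\Hom_Y(\Forg\Phi M,\Forg\Phi N)$ and $m_{\sigma_X}=|G|^{-1}m_{\sigma_{\omega,B}}\circ\Forg\Phi$, and then applies Theorem~\ref{Cheng}. This is essential: the deformation needed next moves $Z$ in the direction $H\cdot\ch_1$ with $H\cdot E_j>0$, which does not factor through $\Knum(Y)$.

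For $\sigma_X\in\overline{\Stabgeo(X)}$, a family of geometric stability conditions whose central charges tend to $Z_X$ need not converge to $\sigma_X$. You would have to control the slicings, and you give no mechanism for that. The paper works in the opposite direction (Lemma~\ref{lem:mainlemma-path}). Using the local homeomorphism at $\sigma_X$, it lifts $W_t=Z_X+it\,H\cdot\ch_1^{B^\sharp}+t\rk$ to a path $\tau_t$ starting at $\sigma_X$, so convergence is automatic. It then shows each $\OO_x$ is $\tau_t$-stable for small $t>0$. The metric estimates, together with sign-preservation of $\Im W_t$ on the finite set of classes of mass $<2$, force any destabilizing subobject $M\subset\OO_x$ into $\PP_X(1)$. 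Hence $M\cong\OO_{E_j}(-1)$, which is excluded because $\Im W_t(\OO_{E_j}(-1))=t\,H\cdot E_j>0$.

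For (2), necessity cannot come from checking that $Z(\OO_X)$ or $Z(L)$ lies in a half-plane. The set $\overline{\Ubg}$ is $\widetilde{\GL}_2^+(\R)$-invariant, and $\GL_2^+(\R)$ moves a single nonzero value anywhere; in any case $Z_X(\OO_X)=|G|^{-1}Z_{\omega,B}(\OO_Y)$ does not involve $n$. The missing idea is a semi-invariant. Write $Z=\langle\Omega_Z,-\rangle$ via the Mukai pairing and let $Q(Z)$ be the Gram determinant of $\Re\Omega_Z,\Im\Omega_Z$. Then $Q$ scales by $\det(g)^2$ and equals $(\omega^2)^2>0$ on tilt charges, hence $Q\ge 0$ on $\overline{\Ubg}$, while $Q(Z_X)=(\omega_X^2-\tfrac n8)\,\omega_X^2$. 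For sufficiency, the heart convergence you leave open is handled by reusing $\tau_t$. Set $H_t=\omega_X+tH$, $b_t^2=(\omega_X^2-\tfrac n8+2t)/H_t^2>0$ and $g_c(u+iv)=u+icv$. Then $W_t=g_{1/b_t}\circ Z_{b_tH_t,B^\sharp}$. Since $\tau_t$ is geometric, Proposition~\ref{geometric-stab} gives $\tau_t\cdot\widetilde g_t=\sigma_{b_tH_t,B^\sharp}$, so $\tau_t\in\Ubg$ and $\tau_t\to\sigma_X$.
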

Tramel-Xia \cite{Tramel-Xia} constructed non-geometric stability conditions on surfaces with negative self-intersection curves. Later, \cite{Chou}, \cite{Vilches} defined stability conditions on surfaces that are minimal resolutions of ADE-singularities. The stability conditions of Theorem \ref{thm:B} are of a different nature; they are induced from the cover rather than by tilting on $X.$ They lie in the closure of $\Stabgeo(X)$, but when $\omega_X^2<n/8,$ they lie outside of $\overline{\Ubg},$ and thus $\Stabgeo(X)$ is strictly larger than the $\widetilde{\GL}_2^+(\R)$-orbit of the tilt stability conditions.

\subsection*{Acknowledgements}
During the HIMR Focused Research Grant collaborative visit \emph{Logarithmic Geometry and Smoothing Bridgeland Stability Conditions} in Cambridge, Hannah Dell raised the question of non-finite Albanese morphisms, which led the author to look for further examples. The author thanks the Heilbronn Institute for Mathematical Research (HIMR) for supporting this visit, Nicol\'as Vilches and Xiaolei Zhao for pointing out the reference \cite{Chou}, and Hannah Dell, Howard Nuer, Alekos Robotis, and Nicol\'as Vilches for their comments on an earlier draft. The author was partially supported by a Simons Dissertation Fellowship in Mathematics.

\subsection*{Conventions}
We work over $\C$. $\NS(X)$ is the N\'eron--Severi group, and $\NS(X)_\R=\NS(X)\otimes\R.$ $\Knum(X)$ is the numerical Grothendieck group; which is defined as $K_0(X)/T$, where $T$ is the set of $v\in K_0(X)$ such that $\chi(v,w)=0$ for all $w\in K_0(X).$ This is a finitely generated $\Z$-lattice. $\Db(X)$ is the bounded derived category of coherent sheaves.

\section{Preliminaries}
\subsection*{Bridgeland Stability Conditions}
In this section, we recall some basic definitions and properties of Bridgeland stability conditions on $\Db(X)$, the bounded derived category of coherent sheaves on a smooth projective variety $X$ over $\mathbb{C}$. For more details, we refer to \cite{Bri-original}, \cite{Macri-Schmidt}.

\begin{definition}\label{Def:pre-stability}
    A \emph{slicing} $\PP$ on $\Db(X)$ is a collection of full additive subcategories $\PP(\phi)\subset\Db(X)$ for $\phi\in\R$, where
    \begin{itemize}
        \item $\PP(\phi+1)=\PP(\phi)[1].$
        \item If $\phi_1>\phi_2,$ $E_i\in\PP(\phi_i),$ then $\Hom(E_1,E_2)=0.$
        \item For all $E\in\Db(X),$ there are real numbers $\phi_1>\dots>\phi_m,$ objects $E_i\in\Db(X),$ for $i=1,\dots,m$ and a collection of triangles
       \[\begin{tikzcd}
	{0=E_0} & {E_1} & {E_2} & \dots & {E_{m-1}} & {E_m=E} \\
	& {A_1} & {A_2} && {A_{m-1}} & {A_m}
	\arrow[from=1-1, to=1-2]
	\arrow[from=1-2, to=1-3]
	\arrow[from=1-2, to=2-2]
	\arrow[from=1-3, to=1-4]
	\arrow[from=1-3, to=2-3]
	\arrow[from=1-4, to=1-5]
	\arrow[from=1-5, to=1-6]
	\arrow[from=1-5, to=2-5]
	\arrow[from=1-6, to=2-6]
	\arrow[dashed, from=2-2, to=1-1]
	\arrow[from=2-3, to=1-2]
	\arrow[dashed, from=2-5, to=1-4]
	\arrow[dashed, from=2-6, to=1-5]
\end{tikzcd}\]
where $A_i\in\PP(\phi_i).$ For $E\in\PP(\phi)$, we call the $\phi(E):=\phi$ the \emph{phase} of $E.$
    \end{itemize}

A \emph{pre-stability condition} on $\Db(X)$ is a pair $(Z,\PP),$ with $Z:K_0(X)\to\C$ is an additive group homomorphism, called \emph{central charge} and $\PP$ a slicing, such that,
\[
Z(E)=m(E)\,e^{i\pi\phi}\quad\text{with } m(E)\in\R_{>0}\qquad\text{for every }0\neq E\in\PP(\phi).
\]
Here $K_0(X)$ is the Grothendieck group of $X.$
\end{definition}

\begin{definition}
    Let $\Lambda$ be a finite-rank lattice with a surjective group homomorphism $K_0(X)\xrightarrow{\lambda} \Lambda$.
\begin{enumerate}
  \item A pre-stability condition $\sigma = (\PP, Z)$ on $\Db(X)$ satisfies the \emph{support property} with respect to $(\Lambda, \lambda)$ if
  \begin{enumerate}
    \item $Z$ factors via $\Lambda$, \emph{i.e.} $Z: K_0(X) \xrightarrow{\lambda} \Lambda \to \C$, and
    \item there exists a quadratic form $Q$ on $\Lambda \otimes \R$ such that
    \begin{enumerate}
      \item $\ker Z \otimes \R$ is negative definite with respect to $Q$, and
      \item every $\sigma$-semistable object $E \in \Db(X)$ satisfies $Q(\lambda(E)) \geq 0$.
    \end{enumerate}
  \end{enumerate}
  \item A Bridgeland pre-stability condition $\sigma$ on $\Db(X)$ that satisfies the support property with respect to $(\Lambda, \lambda)$ is called a \emph{Bridgeland stability condition} (with respect to $(\Lambda, \lambda)$). If $\lambda$ also factors via $\Knum(X)$, we call $\sigma$ a \emph{numerical Bridgeland stability condition}.
\end{enumerate}
\end{definition}

Unless stated otherwise, we will assume that all Bridgeland stability conditions are numerical. The set of numerical stability conditions on $\Db(X)$ will be denoted $\Stab(X).$

\begin{theorem}\cite{Bri-original}
    The space of stability conditions $\Stab(X)$ has the natural structure of a complex manifold of dimension $\rk(\Knum(X))$. The forgetful map $\ZZ$ defines the local homeomorphism
    \begin{align*}
        \ZZ:\Stab(X)&\longrightarrow \Hom(\Knum,\C),\\
        (Z,\PP)&\longmapsto Z.
    \end{align*}
\end{theorem}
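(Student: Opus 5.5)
The plan follows Bridgeland's original argument, adapted to the numerical lattice $\Lambda=\Knum(X)$ with $\lambda$ the quotient map. The first step is to establish the \emph{deformation property}. Let $\sigma=(Z,\PP)\in\Stab(X)$ satisfy the support property with quadratic form $Q$. Let $W\in\Hom(\Knum(X),\C)$ be close to $Z$, in the sense that $\ker W\otimes\R$ is still $Q$-negative definite and $\|W-Z\|_\sigma<\sin(\pi\epsilon)$ for a small $\epsilon$. Here the norm is
\[
\|U\|_\sigma=\sup\{|U(E)|/|Z(E)| : E\ \sigma\text{-semistable}\}.
\]
This norm is finite because of the support property: on the cone $Q\ge 0$, negativity of $Q$ on $\ker Z$ gives $|U(v)|\le C|Z(v)|$. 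We then want a unique $\tau=(W,\mathcal{Q})$ with $d(\PP,\mathcal{Q})<\epsilon$.

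To build $\mathcal{Q}$, we work in the thin quasi-abelian categories $\PP((a,b))$ with $b-a<1-2\epsilon$. We define $\mathcal{Q}(\psi)$ as the $W$-semistable objects of phase $\psi$ inside $\PP((\psi-\epsilon,\psi+\epsilon))$. Harder--Narasimhan filtrations for $W$ exist inside each thin category, since the relevant objects have $Z$-masses bounded and the support property gives finiteness (a discreteness of the image of $Z$ on classes with bounded mass and $Q\ge0$). The local HN filtrations are then glued to global ones using the $\sigma$-HN filtration. Uniqueness comes from the fact that two slicings at distance $<1$ with the same central charge coincide.

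Next, $\tau$ must itself satisfy the support property with the same $Q$ (possibly shrinking the neighbourhood). A $\tau$-semistable object has its $\sigma$-HN factors in a narrow phase interval. Since $\{Q\ge0\}\cap\{\text{arg }Z\in I\}$ is a convex cone when $I$ is small and $\ker Z$ is negative definite, the class of the object remains in $Q\ge0$.

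The final step is to conclude that $\ZZ$ is a local homeomorphism and to put the complex structure on $\Stab(X)$. The generalized metric
\[
d(\sigma_1,\sigma_2)=\sup_E\Bigl\{|\phi^+_{1}(E)-\phi^+_{2}(E)|,\ |\phi^-_1(E)-\phi^-_2(E)|,\ |\log m_1(E)/m_2(E)|\Bigr\}
\]
gives the topology. Injectivity of $\ZZ$ on small balls follows from the uniqueness lemma, and openness and continuity follow from the deformation result. Hence $\ZZ$ is a local homeomorphism onto open subsets of the complex vector space $\Hom(\Knum(X),\C)\cong\C^{\rk\Knum(X)}$. This is finite-dimensional because $\Knum(X)$ is a finitely generated lattice. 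Pulling back the charts gives the complex manifold structure of dimension $\rk\Knum(X)$, and the transition maps are identities.

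I expect the main obstacle to be existence of HN filtrations for the deformed central charge in the thin categories. This requires the finiteness argument that chains of subobjects terminate, and that is exactly where the support property, rather than mere local finiteness, must be used.
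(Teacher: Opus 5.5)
Your outline is Bridgeland's deformation argument, which is all the paper invokes (it gives no proof and only cites \cite{Bri-original}). You correctly use the support property to make $\|\cdot\|_\sigma$ finite on all of $\Hom(\Knum(X),\C)$. That is exactly what upgrades Bridgeland's local homeomorphism onto a linear subspace to one onto the whole space.

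However, the step showing that the deformed $\tau=(W,\mathcal{Q})$ again satisfies the support property rests on a false claim. The set $\{Q\ge 0\}\cap\{\arg Z\in I\}$ is in general not convex, however small $I$ is. For example, take $\Lambda_\R=\R^3$, $Z(x,y,z)=x+iy$ and $Q=x^2+y^2-z^2$, so $Q$ is negative definite on $\ker Z=\R(0,0,1)$. The vectors $v=(1,0,1)$ and $w=(\cos\theta,\sin\theta,1)$ satisfy $Q(v)=Q(w)=0$ and have $Z$-arguments $0$ and $\theta$. Yet $Q(v+w)=2\cos\theta-2<0$ for every $\theta\neq 0$. So knowing that the $\sigma$-HN factors of a $\tau$-semistable object satisfy $Q\ge 0$ and have $Z$-values in a narrow sector says nothing about $Q$ of the object. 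Convexity holds only along a single ray $\rho$: $Q$ restricted to $Z^{-1}(\R\rho)$ is negative definite on the hyperplane $\ker Z$, so $\{Q\ge0\}\cap Z^{-1}(\rho)$ is one nappe of a cone.

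The simplest repair is to drop the insistence on the same $Q$. The support property is equivalent to a bound $\|v\|\le C|Z(v)|$ on classes of $\sigma$-semistable objects, which you already derived, and this bound propagates:
\begin{itemize}
\item If $E$ is $\tau$-semistable of phase $\psi$, its $\sigma$-HN factors $A_i$ have phases in $(\psi-\epsilon,\psi+\epsilon)$.
\item Hence $|Z(E)|\ge\cos(\pi\epsilon)\sum_i|Z(A_i)|$ and $|W(E)-Z(E)|\le\sin(\pi\epsilon)\sum_i|Z(A_i)|$.
\item So $\|[E]\|\le\sum_i\|[A_i]\|\le C'|W(E)|$ with $C'=C/(\cos\pi\epsilon-\sin\pi\epsilon)$.
\item For a Euclidean norm, $Q_\tau=C'^2|W|^2-\|\cdot\|^2$ is then negative definite on $\ker W$ and nonnegative on $\tau$-semistable classes.
\end{itemize}
Keeping the same $Q$ on the whole component of $\{W:\ Q|_{\ker W}<0\}$ is also true; this is the Bayer--Macr\`i form of the deformation theorem. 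It needs a separate argument that only ever uses ray convexity for intermediate central charges, and it cannot be obtained from sector convexity.

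Your closing remark also misplaces the role of the support property. HN filtrations in thin categories need only local finiteness, which the support property implies, since every thin $\PP((a,b))$ then has finite length. The support property is genuinely needed in two places: for finiteness of $\|\cdot\|_\sigma$ on all of $\Hom(\Knum(X),\C)$, and for the propagation step above, which is the one that fails as written.
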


There is an equivalent characterisation of Bridgeland stability conditions, which uses the notion of a $t$-structure on a triangulated category.

\begin{definition}
    The heart of a bounded $t$-structure in $\Db(X)$ is a full additive subcategory $\AA$ such that
    \begin{itemize}
        \item $\Hom_{\Db(X)}(\AA[k_1],\AA[k_2])=0$ for $k_1>k_2.$
        \item For any object $E\in\Db(X)$ there are integers $k_1>\dots>k_m$ and a collection of exact triangles
        \[\begin{tikzcd}
	{0=E_0} & {E_1} & {E_2} & \dots & {E_{m-1}} & {E_m=E} \\
	& {A_1} & {A_2} && {A_{m-1}} & {A_m}
	\arrow[from=1-1, to=1-2]
	\arrow[from=1-2, to=1-3]
	\arrow[from=1-2, to=2-2]
	\arrow[from=1-3, to=1-4]
	\arrow[from=1-3, to=2-3]
	\arrow[from=1-4, to=1-5]
	\arrow[from=1-5, to=1-6]
	\arrow[from=1-5, to=2-5]
	\arrow[from=1-6, to=2-6]
	\arrow[dashed, from=2-2, to=1-1]
	\arrow[from=2-3, to=1-2]
	\arrow[dashed, from=2-5, to=1-4]
	\arrow[dashed, from=2-6, to=1-5]
\end{tikzcd}\]
where $A_i\in\AA[k_i]$ for $i=1,\dots,m.$
    \end{itemize}
\end{definition}

\begin{definition}\cite{Bri-original}
Let $\AA$ be an abelian category. A \emph{stability function} for $\AA$ is a group homomorphism $Z\colon K(\AA) \to \C$ such that for every non-zero object $E$ of $\AA$,
\[
Z([E]) \in \H := \{ m \cdot e^{i\pi\phi} \mid m \in \R_{>0},\ \phi \in (0,1] \} \subset \C.
\]
For every non-zero object $E$, we define the \emph{phase} by $\phi(E) = \frac{1}{\pi}\arg(Z([E])) \in (0,1]$. We say an object $E$ is \emph{$Z$-stable} (resp.\ \emph{$Z$-semistable}) if $E \neq 0$ and for every proper non-zero subobject $A$, we have $\phi(A) < \phi(E)$ (resp.\ $\phi(A) \leq \phi(E)$).
\end{definition}

\begin{definition}\cite{Bri-original}
Let $\AA$ be an abelian category, and let $Z\colon K(\AA) \to \C$ be a stability function on $\AA$. A \emph{Harder--Narasimhan (HN) filtration} of a non-zero object $E$ of $\AA$ is a finite chain of subobjects in $\AA$,
\[
0 = E_0 \subset E_1 \subset \cdots \subset E_{n-1} \subset E_n = E,
\]
such that each factor $F_i = E_i/E_{i-1}$ (called a \emph{Harder--Narasimhan factor}) is a $Z$-semistable object of $A$ and $\phi(F_1) > \phi(F_2) > \cdots > \phi(F_n)$. Moreover, we say that $Z$ has the \emph{Harder--Narasimhan property} if every non-zero object of $\AA$ has a Harder--Narasimhan filtration.
\end{definition}

\begin{proposition}\cite{Bri-original}
To give a Bridgeland pre-stability condition $(\PP, Z)$ on $\Db(X)$ is equivalent to giving a pair $(Z_{\AA}, \AA)$, where $\AA$ is the heart of a bounded $t$-structure on $\Db(X)$ and $Z_{\AA}$ is a stability function for $\AA$ which has the Harder--Narasimhan property.

Moreover, $(\PP, Z)$ is a numerical Bridgeland stability condition if and only if $Z_{\AA}$ factors via $\Knum(X)$ and satisfies the support property for $Z_{\AA}$-semistable objects.
\end{proposition}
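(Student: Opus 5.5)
The plan is to match the two descriptions directly: pass from a slicing to a heart by taking $\AA=\PP((0,1])$, and from a heart with a stability function back to a slicing by taking Harder--Narasimhan filtrations. The numerical and support conditions will then be checked to agree on both sides.

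\emph{From $(Z,\PP)$ to $(Z_\AA,\AA)$.} Given a pre-stability condition $(Z,\PP)$, I set $\AA:=\PP((0,1])$. This is the extension-closed subcategory generated by the $\PP(\phi)$ with $0<\phi\le 1$.
\begin{itemize}
\item $\AA$ is the heart of a bounded $t$-structure. Hom-vanishing $\Hom(\AA[k_1],\AA[k_2])=0$ for $k_1>k_2$ follows from the Hom-vanishing axiom of the slicing, since $\AA[k]=\PP((k,k+1])$. The filtration of an arbitrary $E$ is obtained by grouping the HN factors of $E$ according to the intervals $(k,k+1]$.
\item Since $\AA$ is the heart of a bounded $t$-structure, $K(\AA)=K_0(X)$, and $Z_\AA:=Z$ maps every nonzero object of $\AA$ into $\H$. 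This holds because each object of $\AA$ is an iterated extension of objects with phases in $(0,1]$, and $\H$ is closed under addition.
\item HN filtrations in $\AA$ are the slicing filtrations of $E\in\AA$. These triangles become short exact sequences in $\AA$ because all factors lie in $\AA$.
\end{itemize}

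\emph{From $(Z_\AA,\AA)$ to $(Z,\PP)$.} Conversely, for $\phi\in(0,1]$ I define $\PP(\phi)$ to be the $Z_\AA$-semistable objects of phase $\phi$, together with $0$, and extend by $\PP(\phi+1)=\PP(\phi)[1]$.
\begin{itemize}
\item Hom-vanishing holds in two cases. Within $\AA$ it is the standard semistability argument: a nonzero map from a semistable object of larger phase to one of smaller phase has an image whose phase violates one of the two semistability inequalities. Across different shifts it follows from the heart axioms.
\item For a general $E$, I first take its cohomology filtration with respect to the $t$-structure, with factors $A_i\in\AA[k_i]$. I then refine each factor by its HN filtration in $\AA[k_i]$. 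The phases decrease strictly because the $k_i$ decrease and the phases inside each shifted heart lie in $(k_i,k_i+1]$.
\item $Z$ is the map induced from $Z_\AA$ via $K(\AA)\cong K_0(X)$, and it has the compatibility $Z(E)\in\R_{>0}e^{i\pi\phi}$ by construction.
\item The two constructions are mutually inverse. Indeed $\PP((0,1])$ recovers $\AA$, and the semistable objects of $\AA$ recover $\PP(\phi)$ by uniqueness of HN filtrations.
\end{itemize}

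\emph{The ``moreover'' part.} $Z$ factors through $\Knum(X)$ exactly when $Z_\AA$ does, since $K(\AA)=K_0(X)$. The $\sigma$-semistable objects are precisely the shifts of $Z_\AA$-semistable objects. Since $Q(\lambda(E[1]))=Q(-\lambda(E))=Q(\lambda(E))$, the support property for one is equivalent to the support property for the other.

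The main obstacle is the Hom-vanishing step and the strict decrease of phases when splicing the $t$-structure filtration with the HN filtrations; everything else is bookkeeping.
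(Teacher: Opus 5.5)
Your argument is the standard proof of Bridgeland's Proposition 5.3, which the paper cites without proof. In one direction you take $\AA=\PP((0,1])$; in the other you take $Z_{\AA}$-semistable objects of phase $\phi$, extend by shifts, and splice the $t$-structure filtration with HN filtrations in the shifted hearts. The support-property clause is immediate, as you say, because $Q$ is even and semistable objects correspond up to shift.

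The one step you assert rather than check is that nonzero objects of $\PP(\phi)$, $0<\phi\le 1$, are $Z_{\AA}$-semistable in $\AA$. This is what makes slicing filtrations of objects of $\AA$ genuine HN filtrations in $\AA$. It follows from a short Hom-vanishing argument. Given a quotient $F\twoheadrightarrow B$ in $\AA$, the composite map to the last HN factor of $B$ is a nonzero epimorphism. Hom-vanishing then forces $\phi^-(B)\ge\phi$, hence $\phi(B)\ge\phi$.
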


\subsubsection*{Group actions}
The autoequivalence group of $\Db(X)$, $\Aut(\Db(X))$ acts on the left by $\Phi\cdot(Z,\PP)=(Z\circ\Phi_*^{-1},\,\Phi(\PP))$, where $\Phi(\PP)(\phi)=\Phi(\PP(\phi))$. The universal cover $\widetilde{\GL}^+(2,\R)$ of $\GL^+(2,\R)$ acts on the right (see \cite{Macri-Schmidt} for details). These two actions commute. As a subgroup $\C\subset\widetilde{\GL}^+(2,\R)$ acts on $\Stab(X)$: $\lambda\in\C$ sends $(Z,\PP)$ to $(e^{-i\pi\lambda}Z,\ \PP')$ with $\PP'(\phi)=\PP(\phi+\Re\lambda)$. 

\begin{definition}
    A stability condition $\sigma\in\Stab(X)$ is \emph{geometric} if every skyscraper sheaf $\OO_x$, $x\in X$, is $\sigma$-stable and all $\OO_x$ have the same phase. We write $\Stabgeo(X)\subset\Stab(X)$ for the set of geometric stability conditions.
\end{definition}
$\Stabgeo(X)$ is stable under the action of $\widetilde{\GL}^+(2,\R)$, and under the subgroup of $\Aut(\Db(X))$ generated by automorphisms of $X$ and twists by line bundles. Using the $\C$-action one can always normalize a geometric stability condition so that every $\OO_x$ has phase $1$ and $Z(\OO_x)=-1$.

\subsubsection{Geometric stability conditions on surfaces}\label{subsection:tilting}
Let $X$ be a smooth projective surface, and $\omega,B\in\NS(X)_\R$ with $\omega$ ample. For any coherent sheaf $E\in\Coh(X)$ the twisted Mumford slope function is defined as follows
\[
\mu_{B,\omega}(E)=
\begin{cases}
\dfrac{\omega\cdot\ch_1^B(E)}{\omega^2\,\ch_0(E)}, & \text{if } \operatorname{ch}_0(E)\neq 0,\\[2ex]
    +\infty, & \text{if } \ch_0(E)= 0.
\end{cases}
\]
Here $\ch^B(E)=e^{-B}\ch(E)$ is the $B$-twisted Chern character of $E.$ We define a pair of subcategories of $\Coh(X),$
\begin{align*}
    \FF_{\omega,B}&=\{E\in\Coh(X)\,;\,\text{any semistable factor $F$ of $E$ satisfies }\mu_{B,\omega}(F)\leq 0\},\\
    \TT_{\omega,B}&=\{E\in\Coh(X)\,;\,\text{any semistable factor $F$ of $E$ satisfies }\mu_{B,\omega}(F)> 0\}.
\end{align*}
Then the smallest extension closed subcategory $\AA_{\omega,B}=\langle\FF_{\omega,B}[1],\TT_{\omega,B} \rangle_{\mathrm{ext}}$ forms the heart of a $t$-structure (see \cite[\S 6]{Macri-Schmidt}). Define the central charge
\[
Z_{\omega,B}(E)=-\int_X e^{-(B+i\omega)}\ch(E).
\]
\begin{theorem}\cite{Bri-K3}, \cite{AB13}
    On a smooth projective surface $X,$ the pair $(Z_{\omega,B},\AA_{\omega,B})$ forms a geometric stability condition for any choice of $\omega,B\in\NS(X)_\R$, where $\omega$ is ample. 
\end{theorem}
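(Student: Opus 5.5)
The plan follows the standard argument of \cite{Bri-K3} and \cite{AB13}. There are three things to check: that $\AA_{\omega,B}$ is a heart, that $Z_{\omega,B}$ is a stability function on it with the Harder--Narasimhan property, and that the support property holds together with geometricity. The heart property is immediate: by Harder--Narasimhan filtrations for $\mu_{B,\omega}$-stability on $\Coh(X)$, the pair $(\TT_{\omega,B},\FF_{\omega,B})$ is a torsion pair, and tilting $\Coh(X)$ at a torsion pair gives a bounded heart.

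For the stability function, expand
\[
Z_{\omega,B}(E)=-\ch_2^B(E)+\tfrac{\omega^2}{2}\ch_0(E)+i\,\omega\cdot\ch_1^B(E).
\]
We need $Z_{\omega,B}(E)\in\H$ for every nonzero object of $\AA_{\omega,B}$, and we treat the possible objects in turn.
\begin{enumerate}
\item If $E\in\TT_{\omega,B}$ is torsion-free, or is a torsion sheaf with one-dimensional support, then $\Im Z_{\omega,B}(E)>0$.
\item If $E$ is a skyscraper sheaf $\OO_x$, then $Z_{\omega,B}(\OO_x)=-1$.
\item If $E=F[1]$ with $F\in\FF_{\omega,B}$, then $\Im Z_{\omega,B}(E)\ge 0$.
\end{enumerate}
The only delicate case in (3) is $F$ $\mu_{B,\omega}$-semistable of slope exactly $0$. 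There we need $\Re Z_{\omega,B}(F)>0$, i.e.
\[
\ch_2^B(F)<\tfrac{\omega^2}{2}\ch_0(F).
\]
This follows from the Bogomolov--Gieseker inequality $(\ch_1^B)^2\ge 2\ch_0\ch_2^B$ together with the Hodge index theorem. Indeed, $\omega\cdot\ch_1^B=0$ forces $(\ch_1^B)^2\le 0$, hence $\ch_2^B\le 0<\tfrac{\omega^2}{2}\ch_0$.

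For the Harder--Narasimhan property, first treat $\omega,B$ rational. Then $\Im Z_{\omega,B}$ takes values in a discrete subgroup, and the argument of \cite[Prop.~7.1]{Bri-K3} applies: $\AA_{\omega,B}$ is noetherian (via the tilting argument) and there are no infinite chains with strictly increasing phase. The general real case then follows by deformation, once the support property has been established locally uniformly.

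For the support property, take the quadratic form
\[
Q(E)=\Delta^{B}_{\omega}(E)=(\omega\cdot\ch_1^B)^2-2\omega^2\ch_0\ch_2^B,
\]
or more robustly $Q=\Delta+C(\omega\cdot\ch_1^B)^2$, where $C$ is a constant chosen so that $C\omega^2+D^2\ge 0$ for every effective divisor $D$ in a suitable cone. This is needed because on a general surface $\Delta=\ch_1^2-2\ch_0\ch_2$ is not negative definite on $\ker Z$ when $\rho(X)>1$. The key step, and the main obstacle, is showing $Q(E)\ge 0$ for every tilt-semistable $E$. The plan follows \cite{BMS}/\cite[\S6]{Macri-Schmidt}: verify $Q\ge 0$ for large $\omega$, where tilt-semistable objects are twisted Gieseker-semistable sheaves or shifts of them and Bogomolov--Gieseker applies. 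Then show that $Q\ge 0$ is preserved across walls as $\omega$ decreases along rays, using that $Q$ is negative semidefinite on $\ker Z$ and that destabilizing subobjects have central charges on the same ray. Negative definiteness of $Q$ on $\ker Z\otimes\R$ is a direct linear-algebra check in coordinates $(\ch_0,\ch_1^B,\ch_2^B)$, splitting $\ch_1^B$ along $\omega$ and $\omega^\perp$.

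Finally, geometricity. Each $\OO_x$ has phase $1$. It is stable because any subobject $A\hookrightarrow\OO_x$ in $\AA_{\omega,B}$ gives a cohomology long exact sequence showing that either $\mathcal H^0(A)$ is a subsheaf of $\OO_x$, or the quotient is of the form $F[1]$ with $F\in\FF_{\omega,B}$. In the second case we must rule out $\phi(F[1])=1$. This would force $F$ to be $\mu$-semistable of slope $0$ with $\Re Z>0$, and that contradicts $Z(F[1])$ being a negative real number, by the inequality established above.
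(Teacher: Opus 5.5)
The paper gives no proof of this statement; it is quoted from \cite{Bri-K3} and \cite{AB13}. Your outline is the standard argument from those sources and \cite{Macri-Schmidt}. The tilt, the stability-function check via Bogomolov--Gieseker plus Hodge index, and the rational-then-deform strategy for Harder--Narasimhan filtrations are fine. There is, however, a genuine gap in the last step, the stability of $\OO_x$.

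Any nonzero proper subobject $A\hookrightarrow\OO_x$ in $\AA_{\omega,B}$ is a sheaf in $\TT_{\omega,B}$, since $\mathcal H^{-1}(A)\subset\mathcal H^{-1}(\OO_x)=0$. The sheaf map $A\to\OO_x$ is nonzero, hence surjective, so the quotient is $F[1]$ with $F=\ker(A\to\OO_x)\in\FF_{\omega,B}$. You correctly reduce to excluding $\phi(F[1])=1$, but the central charge gives no contradiction there. If $F$ is $\mu_{B,\omega}$-semistable of slope $0$, your own inequality gives $\Re Z_{\omega,B}(F)>0$. So $Z_{\omega,B}(F[1])=-Z_{\omega,B}(F)$ \emph{is} a negative real number, i.e.\ $F[1]\in\PP(1)$. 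Such objects genuinely have phase $1$, and nonzero maps $\OO_x\to F[1]$ do exist: $\mathrm{Ext}^1(\OO_x,F)\cong\mathrm{Ext}^1(F,\OO_x)^\vee\neq 0$ when $F$ is not locally free at $x$. Numerics alone therefore cannot exclude a destabilizing quotient.

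What rules it out is the structure of $\TT_{\omega,B}$. The condition $\phi(F[1])=1$ forces $\omega\cdot\ch_1^B(A)=\Im Z_{\omega,B}(A)=0$. A sheaf in $\TT_{\omega,B}$ with this property has no torsion-free part and no one-dimensional torsion, so it is zero-dimensional. Then $F\subset A$ is torsion, while sheaves in $\FF_{\omega,B}$ are torsion-free. Hence $F=0$ and $A\cong\OO_x$.

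Your support-property paragraph also misdiagnoses the problem, although your final choice of $Q$ is the right one. On $\ker Z_{\omega,B}\otimes\R$ one has $\omega\cdot\ch_1^B=0$ and $\ch_2^B=\tfrac{\omega^2}{2}\ch_0$. There $\Delta=(\ch_1^B)^2-\omega^2\ch_0^2$, which is negative definite by Hodge index; your own later linear-algebra check shows this, since $Q$ and $\Delta$ agree on $\ker Z$. It is your first candidate $(\omega\cdot\ch_1^B)^2-2\omega^2\ch_0\ch_2^B$ that degenerates there: it vanishes on $(0,D,0)$ for $D\in\omega^\perp$. So it cannot give the support property with respect to $\Knum(X)$.

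The correction term is needed for \emph{positivity}, not definiteness. If $C_0$ is a curve with $C_0^2<0$, then $\OO_{C_0}$ is tilt-stable at large volume but $\Delta(\OO_{C_0})=C_0^2<0$. The right condition is $C(\omega\cdot D)^2+D^2\ge 0$ for all effective $D$; such a $C$ exists because $\omega$ is strictly positive on the closed pseudo-effective cone. Your condition $C\omega^2+D^2\ge0$ fails for $D=mC_0$ with $m\gg0$. With the correct $C$, the large-volume check for torsion sheaves on curves, where Bogomolov--Gieseker says nothing, is exactly the defining inequality of $C$.

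Two smaller points. The wall-crossing step needs a well-founded induction, e.g.\ on $\omega\cdot\ch_1^B$, which is discrete only for rational classes. And after deforming to real $(\omega,B)$ you must still identify the heart with $\AA_{\omega,B}$, e.g.\ via \cite[Prop.~10.3]{Bri-K3}. That identification again depends on skyscrapers being stable of phase $1$, which needs the corrected argument above.
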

The heart $\AA_{\omega,B}$ is formed via a \emph{tilting construction}. Hence we call these \emph{tilt-stability conditions} on $X.$

\subsection*{Product-Quotient Surfaces}
Let $G$ be a finite group acting faithfully on two smooth projective curves $C_1$ and $C_2$ of genus at least $2$ and diagonally on $C_1\times C_2,$ i.e. $g\cdot (x,y)=(g\cdot x,g\cdot y).$
\begin{definition}
    The minimal resolution $X$ of the singularities of $(C_1\times C_2)/G$ is called a \emph{product-quotient} surface.
\end{definition}

These surfaces form an interesting class of general type surfaces. We refer to \cite{BP12} for the classification of minimal product-quotient surfaces with $p_g=q=0$. There are $72$ families of product-quotient surfaces, in which $13$ families of surfaces have the group $G$ acting freely on $C_1\times C_2.$ In these cases, we have $X=(C_1\times C_2)/G.$ Whenever $G$ acts on $C_1\times C_2$ with fixed points, the quotient $(C_1\times C_2)/G$ has canonical singularities. In the second part of the paper we explore stability conditions on the surfaces where $(C_1\times C_2)/G$ has only $A_1$-singularities.

If $G$ acts freely, we have an equivalence of derived categories,
\[
\Db(X)\cong\Dbg(C_1\times C_2),
\]
where $\Dbg(C_1\times C_2)$ is the derived category of $G$-equivariant sheaves on $C_1\times C_2.$ There are two seemingly different constructions of geometric stability conditions on $X$. One is the standard tilt stability construction on a smooth projective surface described above. The other is a \emph{product} stability condition on $C_1\times C_2$ defined by \cite{Liu-product} that induces a geometric stability condition on $\Dbg(C_1\times C_2),$ and hence on $\Db(X)$. In the next section, we prove that these induced stability conditions do not leave the tilt stability chamber.

\section{Free action}
Throughout this section $C_1,C_2$ are smooth projective curves of genus at least $1$, $S=C_1\times C_2$, and $G$ is a finite group that acts faithfully on $C_1$ and on $C_2$, diagonally on $S$. Let $\operatorname{pr}_i\colon S\to C_i$ be the projections and
\[
f_1=\operatorname{pr}_1^*[\mathrm{pt}],\qquad f_2=\operatorname{pr}_2^*[\mathrm{pt}]
\]
be the fibre classes. Both classes are $G$-invariant and we have $f_1^2=f_2^2=0$ and $f_1\cdot f_2=1$.

\subsection{Central charges on curves and on the product}
Following \cite{Curves-Macri}, we know that the stability manifold $\Stab(C)$ of a smooth projective curve $C$ of genus at least $1$ is a single orbit $[(Z_0,\Coh(C))]$ under the action of $\widetilde{\operatorname{GL}}_2^+(\R)$, where $Z_0(E)=-\deg E+i\operatorname{rk}E$ for any object $E\in \Db(C).$

\begin{lemma}\label{stability-on-curves}
    Every central charge on $C$ has the form
\[
Z^\gamma(E)=-\deg E+\gamma\operatorname{rk}E,
\]
up to the $\C$-action, where $\Im\gamma>0.$
\end{lemma}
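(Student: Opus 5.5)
By Macrì's classification just recalled, every $\sigma\in\Stab(C)$ has the form $(Z_0,\Coh(C))\cdot\tilde g$ for some $\tilde g\in\widetilde{\GL}^+_2(\R)$. It therefore suffices to show that, up to the $\C$-action, every central charge $Z_0\cdot g$ with $g\in\GL^+_2(\R)$ can be written as $Z^\gamma$ with $\Im\gamma>0$.

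Since $\Knum(C)\otimes\R\cong\R^2$ via $(\rk,\deg)$, any central charge has the form
\[
Z(E)=a\deg E+b\rk E,\qquad a,b\in\C .
\]
Write $Z=Z_0\circ T$, where $T$ is the real-linear map on $\C\cong\R^2$ corresponding to $g$. Since $Z_0(E)=-\deg E+i\rk E$, we get $a=-T(1)$ and $b=T(i)$.

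The condition $\det g>0$ means $T$ preserves orientation. So the pair $(T(1),T(i))$ is a positively oriented basis, i.e.
\[
\Im\bigl(\overline{T(1)}\,T(i)\bigr)>0 .
\]
In particular $a\neq 0$.

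Now apply the $\C$-action: choose $\lambda\in\C$ with $e^{-i\pi\lambda}=-1/a$. This is possible because $-1/a\in\C^*$ and the exponential is surjective onto $\C^*$. The new central charge is
\[
-\tfrac1a Z(E)=-\deg E+\gamma\rk E,\qquad \gamma=-\tfrac{b}{a}=\frac{T(i)}{T(1)} .
\]
Then
\[
\Im\gamma=\frac{\Im\bigl(\overline{T(1)}\,T(i)\bigr)}{|T(1)|^2}>0
\]
by the orientation condition. Note that the $\C$-action only shifts phases and rescales $Z$, so it changes nothing else; the heart becomes a rotated one.

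Conversely, every $Z^\gamma$ with $\Im\gamma>0$ does occur. The real-linear map $T$ sending $1\mapsto1$ and $i\mapsto\gamma$ is orientation preserving, so $Z^\gamma=Z_0\circ T$ lies in the orbit.

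The only delicate point is bookkeeping of the sign and orientation conventions for how $\widetilde{\GL}^+_2(\R)$ acts on central charges (as $g^{-1}\circ Z$ versus $Z\circ T$). Whichever convention is used, the relevant map is orientation preserving, so the positivity $\Im\gamma>0$ is unaffected.
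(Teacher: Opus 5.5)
Your proof is correct and follows the paper's argument. The paper also writes an orbit element as $f_A\circ Z_0=\alpha\deg+\beta\rk$ with $A=\begin{pmatrix}p&q\\ r&s\end{pmatrix}$, uses that $A\mapsto A^{-1}$ is a bijection to dispose of the convention issue, and computes $\Im(-\beta/\alpha)=(ps-qr)/(p^2+r^2)>0$; this is your quantity $\Im\bigl(\overline{T(1)}\,T(i)\bigr)/|T(1)|^2$ written in matrix coordinates. The one slip is notational: the composite should be $T\circ Z_0$ rather than $Z_0\circ T$, and your formulas $a=-T(1)$, $b=T(i)$ already use the correct order. Your converse direction is extra; the paper does not prove it in the lemma but relies on it later, in the free-action theorem, to realise every pair $(\omega_X,B_X)$.
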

\begin{proof}
    An element $A=\begin{pmatrix}
        p & q\\
        r & s
    \end{pmatrix} \in\GL_2^+(\R)$ acts on $\C\cong\R^2$ via,
    \[
    f_A(x+iy)=(px+qy)+i(rx+sy).
    \]
    On central charges, the action of $A$ is $Z\mapsto f_A^{-1}\circ Z$. As $A\mapsto A^{-1}$ is a bijection of $\GL_2^+(\R)$, the orbit of $Z_0$ is $\{f_A\circ Z_0: A\in\GL_2^+(\R)\}$, and
\[
(f_A\circ Z_0)(E)=(-p-ir)\deg E+(q+is)\rk E.
\]

    Thus, every central charge on $C$ has the form,
    \[
    Z(E)=\alpha\deg E+\beta\rk E,
    \]
    where $\alpha=-p-ir$ and $\beta=q+is$. We have
    \[
    \det\begin{pmatrix}
        p & q\\
        r & s
    \end{pmatrix}>0,
    \]
    in other words, $ps-qr>0.$ This implies,
    \[
    \Im(\gamma)=\Im\left(\frac{-\beta}{\alpha}\right)=\frac{ps-qr}{p^2+r^2}>0.
    \]
    Upto the $\C$-action, we can say that $Z(E)$ takes the form,
\[
Z^\gamma(E)=-\deg E+\gamma\rk E,
\]
where $\Im\gamma>0.$
\end{proof}

In \cite{Liu-product}, Y. Liu defined the existence of Bridgeland stability conditions on product of a variety with a curve, given a stability condition on the
variety itself. Below we follow the description given in \cite{LMPSZ}.

\begin{lemma}
    Fix $(B,\omega)\in \Q\times\Q_{>0}$. Then,
    the product central charge $Z_S^{B,\omega}$ takes the following form 
    \[
    Z_S^{B,\omega}(E_1\boxtimes E_2)=-Z_{C_1}^{B+i\omega}(E_1)Z_{C_2}^{B+i\omega}(E_2).
    \]
    \end{lemma}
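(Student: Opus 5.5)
We read the statement as an explicit description of Liu's product central charge, as presented in \cite{LMPSZ}. The plan is to unwind that construction on $S=C_1\times C_2$ and evaluate it on exterior products $E_1\boxtimes E_2$.

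\textbf{Step 1: the general formula.} In \cite{Liu-product} (see also \cite{LMPSZ}), starting from a stability condition $\sigma=(Z,\AA)$ on a variety $V$ and a curve $C$ polarised by the point class, the product central charge on $V\times C$ is obtained by twisting with the $\mathbb{C}$-valued class $e^{-(B+i\omega)[\mathrm{pt}]}$ on $C$. Equivalently, $Z_{V\times C}(E)$ is the $Z$-central charge of the pushforward $\pr_{V*}\bigl(E\otimes \pr_C^*(\cdot)\bigr)$, weighted by $\int_C e^{-(B+i\omega)[\mathrm{pt}]}$ and evaluated at the Chern character level. We take $V=C_1$ with $\sigma_{C_1}$ having central charge $Z_{C_1}^{B+i\omega}(E)=-\int_{C_1}e^{-(B+i\omega)f}\ch(E)=-\deg E+(B+i\omega)\rk E$. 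This is of the form $Z^\gamma$ of Lemma~\ref{stability-on-curves} with $\gamma=B+i\omega$, which is admissible because $\omega>0$.

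\textbf{Step 2: the product formula on $S$.} We write the product central charge as
\[
Z_S^{B,\omega}(E)=-\int_S e^{-(B+i\omega)(f_1+f_2)}\ch(E),
\]
possibly up to the sign conventions of \cite{LMPSZ}, which we will check. For $E=E_1\boxtimes E_2$ the Künneth formula gives $\ch(E)=\pr_1^*\ch(E_1)\cdot\pr_2^*\ch(E_2)$. Since $f_1,f_2$ are pulled back from the factors, $e^{-(B+i\omega)(f_1+f_2)}=\pr_1^*e^{-(B+i\omega)[\mathrm{pt}]}\cdot\pr_2^*e^{-(B+i\omega)[\mathrm{pt}]}$. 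Integration over $S$ of a product of pullbacks factors as $\int_{C_1}\cdot\int_{C_2}$, because $f_1\cdot f_2=1$ and $f_i^2=0$. Hence
\[
\int_S e^{-(B+i\omega)(f_1+f_2)}\ch(E_1\boxtimes E_2)=\Bigl(\int_{C_1}e^{-(B+i\omega)}\ch E_1\Bigr)\Bigl(\int_{C_2}e^{-(B+i\omega)}\ch E_2\Bigr)=Z_{C_1}^{B+i\omega}(E_1)\,Z_{C_2}^{B+i\omega}(E_2).
\]
Inserting the overall minus sign gives the statement. As a sanity check, a skyscraper $\OO_{(x,y)}=\OO_x\boxtimes\OO_y$ has $Z=-(-1)(-1)=-1$, which is consistent with the geometric normalisation.

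\textbf{Main obstacle.} The delicate step is Step 1: we must confirm that the central charge in \cite{LMPSZ} is indeed $-\int_S e^{-(B+i\omega)(f_1+f_2)}\ch$, with the correct sign and with the same $B,\omega$ on both factors. Rationality of $(B,\omega)$ is needed there, since Liu's construction is stated for rational parameters. We would first write out the \cite{LMPSZ} recipe for $Z_{V\times C}$ with $V=C_1$ and expand it in terms of $\ch_0,\ch_1,\ch_2$ of $E$. We would then compare coefficients with $-\bigl(\ch_2-(B+i\omega)(f_1+f_2)\ch_1+(B+i\omega)^2\ch_0\bigr)$, using $(f_1+f_2)^2/2=1$. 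Once the two expressions agree, the factorisation in Step 2 is a routine computation.
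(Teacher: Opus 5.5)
Your proposal is correct and follows the paper's proof. The paper also takes $Z_S^{B,\omega}(E)=-\int_S e^{-(B+i\omega)(f_1+f_2)}\ch(E)$ directly from \cite[Eq.~(4.10), Lemma~4.4]{LMPSZ}. It then extracts the degree-two part for $E_1\boxtimes E_2$, which is your K\"unneth factorization: each factor integral equals $-Z_{C_i}^{B+i\omega}(E_i)$, so those two signs cancel and only the outer minus remains. Your Step~1 paraphrase of Liu's construction is not needed. As in the paper, the cited formula is the only external input.
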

\begin{proof}
Following \cite[Eq~(4.10) and Lemma~4.4]{LMPSZ}, the product stability condition on
$S=C_1\times C_2$ has central charge
\[
Z_S^{B,\omega}(E)
=
-\int_S e^{-(B+i\omega)(f_1+f_2)}\operatorname{ch}(E),
\]
$E_1\boxtimes E_2=\pr_1^*(E_1)\otimes\pr_2^*(E_2)$;  write
\[
r_i=\operatorname{rk}(E_i),
\qquad
d_i=\deg(E_i).
\]
Since
\[
e^{-(B+i\omega) f_i}=1-(B+i\omega)f_i,
\]
we get
\[
e^{-(B+i\omega)(f_1+f_2)}\operatorname{ch}(E_1\boxtimes E_2)
=
\bigl(r_1+(d_1-(B+i\omega)r_1)f_1\bigr)
\bigl(r_2+(d_2-(B+i\omega)r_2)f_2\bigr).
\]
Hence the degree-two component is
\[
(d_1-(B+i\omega)r_1)(d_2-(B+i\omega)r_2)f_1f_2=(d_1-(B+i\omega)r_1)(d_2-(B+i\omega)r_2).
\]
It follows that
\[
Z_S^{B,\omega}(E_1\boxtimes E_2)
=
-(d_1-(B+i\omega)r_1)(d_2-(B+i\omega)r_2).
\]
Finally, since
\[
Z_{C_i}^{B+i\omega}(E_i)=-d_i+(B+i\omega)r_i,
\]
we conclude that
\[
Z_S^{B,\omega}(E_1\boxtimes E_2)
=
-Z_{C_1}^{B+i\omega}(E_1)\,
 Z_{C_2}^{B+i\omega}(E_2).
\]
\end{proof}

Now for $\omega,B\in\operatorname{NS}(S)_{\R}$ with $\omega$ ample define the central charge as in \S \ref{subsection:tilting},

\[
Z_{\omega,B}(E)=-\int_S e^{-(B+i\omega)}\operatorname{ch}(E)
\]

Start with the central charges $Z_{C_1}^{\gamma_1}=-\deg+\gamma_1\rk$ and $Z_{C_2}^{\gamma_2}=-\deg+\gamma_2\rk$ on the curves $C_1$ and $C_2$ respectively, with $\Im\gamma_i>0$. Motivated from the previous lemma, we denote the product stability constructed from $Z_1$ and $Z_2$ as $Z_1\otimes Z_2.$

\begin{lemma}\label{central-charge}
Let $\omega,B$ be the classes with
\[
B+i\omega=\gamma_1f_1+\gamma_2f_2.
\]
Then $\omega$ is ample and
\[
Z_{C_1}^{\gamma_1}\otimes Z_{C_2}^{\gamma_2}=-\alpha_1\alpha_2\,Z_{\omega,B},
\]
where $\gamma_i=-\frac{\beta_i}{\alpha_i}$ as in the proof of Lemma \ref{stability-on-curves}.
\end{lemma}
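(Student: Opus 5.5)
The argument is a direct computation: first identify $\omega$ and $B$ explicitly and check ampleness, then compare the two central charges on exterior products $E_1\boxtimes E_2$ by redoing the computation of the preceding lemma with distinct parameters $\gamma_1,\gamma_2$, and finally extend the identity to all of $\Knum(S)$ via the integral formula.

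\textbf{Ampleness.} Since $f_1,f_2$ are real classes, $B+i\omega=\gamma_1f_1+\gamma_2f_2$ means
\[
B=\Re(\gamma_1)f_1+\Re(\gamma_2)f_2,\qquad \omega=\Im(\gamma_1)f_1+\Im(\gamma_2)f_2 .
\]
By Lemma \ref{stability-on-curves}, $\Im\gamma_i>0$. Write $a=\Im\gamma_1$ and $b=\Im\gamma_2$, both positive. The class $af_1+bf_2$ is $\pr_1^*$ of a positive-degree (hence ample) class on $C_1$ plus $\pr_2^*$ of a positive-degree class on $C_2$. An exterior tensor product of ample classes is ample on $C_1\times C_2$, so $\omega$ is ample. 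As a check via Nakai--Moishezon: $\omega^2=2ab>0$, and for any irreducible curve $D$ we have $\omega\cdot D=a(f_1\cdot D)+b(f_2\cdot D)>0$, because $D$ cannot be contracted by both projections.

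\textbf{Normalisation.} Recall from the proof of Lemma \ref{stability-on-curves} that the original curve charges are $Z_i=\alpha_i\deg+\beta_i\rk$. These can be rewritten as
\[
Z_i=-\alpha_i\bigl(-\deg+\gamma_i\rk\bigr)=-\alpha_i Z_{C_i}^{\gamma_i}.
\]
Thus the constant $-\alpha_1\alpha_2$ is exactly the product of the two normalising scalars, $(-\alpha_1)(-\alpha_2)$, combined with the sign convention of the preceding lemma.

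\textbf{Computation on exterior products.} Repeat the computation of the preceding lemma, now with $\gamma_1\neq\gamma_2$ allowed. Using $e^{-\gamma_if_i}=1-\gamma_if_i$ and $\ch(E_1\boxtimes E_2)=\pr_1^*\ch(E_1)\cdot\pr_2^*\ch(E_2)$, the degree-two part of $e^{-(\gamma_1f_1+\gamma_2f_2)}\ch(E_1\boxtimes E_2)$ is
\[
(d_1-\gamma_1r_1)(d_2-\gamma_2r_2)=Z_{C_1}^{\gamma_1}(E_1)\,Z_{C_2}^{\gamma_2}(E_2).
\]
Hence $Z_{\omega,B}(E_1\boxtimes E_2)=-Z_{C_1}^{\gamma_1}(E_1)Z_{C_2}^{\gamma_2}(E_2)$. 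Inserting the normalisation above gives the claimed identity, up to the precise sign in the definition of $Z_1\otimes Z_2$. This sign is the one point I would check carefully against \cite{LMPSZ}.

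\textbf{Extension to all classes (main obstacle).} The identity must hold on all of $\Knum(S)$, not just on classes of exterior products, since $\NS(S)$ may be larger than $\Z f_1\oplus\Z f_2$ (for instance, graphs of correspondences). I would avoid decomposing $K$-classes. Instead, I would use that the product central charge of \cite{Liu-product}, in the form \cite[Eq.~(4.10)]{LMPSZ}, is itself defined for every $E$ by an integral $-\int_S e^{-(\gamma_1f_1+\gamma_2f_2)}\ch(E)$, up to the normalising constant. This formula is the same one used in the preceding lemma with $\gamma_1=\gamma_2=B+i\omega$, and it generalises verbatim to independent parameters. Both sides are then additive functionals of $\ch(E)$ given by the same integral, so they agree on $\Knum(S)$.
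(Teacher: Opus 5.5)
Your proposal is correct and follows the paper's proof: ampleness from $\Im\gamma_i>0$, then the computation on $E_1\boxtimes E_2$ repeated from the preceding lemma, with your extension to all of $\Knum(S)$ via the integral formula of \cite[Eq.~(4.10)]{LMPSZ} making explicit what the paper leaves implicit. The sign you flag does depend on the convention for $\otimes$. With the un-normalised charges $Z_i=-\alpha_iZ_{C_i}^{\gamma_i}$, the preceding lemma's rule $-Z_1(E_1)Z_2(E_2)$ gives $+\alpha_1\alpha_2Z_{\omega,B}$, while the plain product gives $-\alpha_1\alpha_2Z_{\omega,B}$. This is harmless, because Corollary~\ref{product=tilt} absorbs any nonzero scalar into the $\C$-action.
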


\begin{proof}
A divisor $af_1+bf_2$ on $S$ is ample if and only if $a,b>0$, and $\operatorname{Im}\gamma_i>0$ implies $\omega$ is ample. Rest of the proof goes similarly as the previous lemma.
\end{proof}

\begin{proposition}\label{geometric-stab}
\cite[Prop. 10.3]{Bri-K3} Let $X$ be a smooth projective surface and $\sigma=(Z,\mathcal A)$ a stability condition on $D^b(X)$ such that $Z=Z_{\omega,B}$ for some ample $\omega$ and some $B\in\operatorname{NS}(X)_{\mathbb R}$, and such that every skyscraper sheaf is $\sigma$-stable of phase $1$. Then $\mathcal A=\operatorname{Coh}^{\omega,B}(X)$, and $\sigma=\sigma_{\omega,B}$.
\end{proposition}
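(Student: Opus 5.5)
This is Bridgeland's Proposition 10.3 from \cite{Bri-K3}, so the plan is to reproduce his argument: identify the heart $\AA$ of $\sigma$ with the tilted heart $\operatorname{Coh}^{\omega,B}(X)=\AA_{\omega,B}$, then conclude from the central charges. The first step is to control the cohomology sheaves of objects of $\AA$ using the stable skyscrapers. Let $E\in\AA$. Since every $\OO_x$ is stable of phase $1$, Serre duality gives
\[
\Hom(\OO_x,E[i])\cong\Hom(E,\OO_x[2-i])^\vee .
\]
Both sides vanish for $i<0$, and for $i>1$, because $\OO_x$ and $E$ lie in the same heart $\AA$, which is the heart of a $t$-structure. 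A standard spectral sequence argument (Bridgeland's Lemma 10.1) then shows that $E$ has cohomology sheaves only in degrees $-1$ and $0$, that $H^{-1}(E)$ is torsion free, and that $\Hom(\OO_x,H^{-1}(E)[1])$ controls the rest. We take $\TT=\AA\cap\Coh(X)$ and $\FF=\AA[-1]\cap\Coh(X)$. This exhibits $\AA$ as the tilt of $\Coh(X)$ with respect to the torsion pair $(\TT,\FF)$, so $H^0(E)\in\TT$ and $H^{-1}(E)\in\FF$.

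The second step is to identify $(\TT,\FF)$ with $(\TT_{\omega,B},\FF_{\omega,B})$ using the positivity $\Im Z\ge0$ on $\AA$. The key formula is
\[
\Im Z_{\omega,B}(E)=\omega\cdot\ch_1^B(E).
\]
I then need four facts. First, every torsion sheaf lies in $\TT$: skyscrapers lie in $\AA$, and Bridgeland shows torsion sheaves do too, because a torsion sheaf $T$ has $\Hom(T,\OO_x)\neq0$ so $T$ cannot sit in $\FF$. Second, a $\mu_{B,\omega}$-stable sheaf $F$ with $\mu_{B,\omega}(F)>0$ must lie in $\TT$. Otherwise $F\in\FF$ and $\Im Z(F[1])=-\omega\cdot\ch_1^B(F)<0$, contradicting $F[1]\in\AA$. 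Third, the dual statement: a stable $F$ with $\mu_{B,\omega}(F)<0$ lies in $\FF$. Fourth, the borderline case $\mu_{B,\omega}(F)=0$ has to be handled separately.

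The borderline case is the main obstacle. Here $\Im Z=0$, so positivity does not decide whether $F$ goes into $\TT$ or $\FF$. For such a stable torsion-free $F$ with $F\in\TT$, the object $F$ would have phase $1$ in $\AA$. Then there is a nonzero map $F\to\OO_x$, and since both $F$ and $\OO_x$ have phase $1$, we get a short exact sequence $0\to K\to F\to\OO_x\to0$ in $\AA$ with $K$ of phase $1$. On the other hand $\OO_x$ is stable of phase $1$, so it cannot have a phase-$1$ quotient of this form. Bridgeland's argument derives a contradiction by iterating this, which is where the stability of every skyscraper and the Bogomolov-type bound on $\Re Z$ are used; I will follow his proof verbatim. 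This puts all slope-$\le0$ semistable sheaves in $\FF$, so $\TT=\TT_{\omega,B}$ and $\FF=\FF_{\omega,B}$.

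Finally, since HN filtrations respect the torsion pair, we get $\AA=\AA_{\omega,B}$. Because the central charges agree and a stability condition is determined by its heart together with its central charge, this gives $\sigma=\sigma_{\omega,B}$.
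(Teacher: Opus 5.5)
The paper gives no proof of its own here; it only cites Bridgeland. Your outline (use the stable skyscrapers to show $\AA$ is a tilt of $\Coh(X)$, then pin down the torsion pair with $\Im Z\ge 0$) is the right one.

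The genuine gap is the borderline case. You correctly flag it as the main obstacle but do not resolve it, and the step you write there gives no contradiction. If $0\ne F\in\TT$ has $\Im Z(F)=0$, then $F\in\PP(1)$. A nonzero map $F\to\OO_x$ is then an epimorphism in $\PP(1)$ precisely \emph{because} $\OO_x$ is stable, hence simple in $\PP(1)$. Here $\OO_x$ is the quotient, so its stability is not violated. You need an actual closing argument; any one of the following works.
(i) Bogomolov plus Hodge index. For torsion-free $\mu_{B,\omega}$-semistable $F$ with $\omega\cdot\ch_1^B(F)=0$ one has $(\ch_1^B(F))^2\le 0$, hence $\ch_2^B(F)\le 0$. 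So $\Re Z_{\omega,B}(F)=\tfrac{\omega^2}{2}\rk F-\ch_2^B(F)>0$, contradicting $Z(F)\in\R_{<0}$.
(ii) Your iteration, made precise. The kernel $K$ is again a nonzero torsion-free sheaf in $\PP(1)$, and $Z(K)=Z(F)+1$ since $Z(\OO_x)=-1$. After finitely many steps you reach a nonzero object of $\PP(1)$ whose central charge is not in $\R_{<0}$.
(iii) By the second half of Bridgeland's Lemma 10.1, a stable object of $\PP(1)$ is either some $\OO_x$ or $L[1]$ with $L$ locally free. Neither admits a nonzero map to a torsion-free sheaf, so no nonzero torsion-free sheaf lies in $\PP(1)$.
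Route (iii) is what works in Bridgeland's K3 setting, where no Bogomolov-type bound forces $\Re Z>0$ for arbitrary $\omega$ (consider $\OO_X$ with $\beta=0$, $\omega^2<2$). So ``Bogomolov'' is not what you would be following verbatim. Also, for $F\notin\TT$ you must run this on the $\TT$-part $A\subseteq F$ of the torsion-pair sequence. By semistability of $F$ one has $\Im Z(A)\le 0$, so $\Im Z(A)=0$ and $A\in\PP(1)$.

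Two further justifications are wrong as written.
\begin{enumerate}
\item In the first step, $\Hom(\OO_x,E[2])\cong\Hom(E,\OO_x)^\vee$ does not vanish in general (take $E=\OO_x$). The correct vanishing range $i\notin\{0,1,2\}$ only gives a three-term complex of locally free sheaves. Killing $H^{-2}(E)$ needs the extra fact that $\Hom(\OO_x,E)\neq 0$ for only finitely many $x$: each such $\OO_x$ is a simple subobject of $E$ in $\AA$, hence a Jordan--H\"older factor of its phase-one part. That is the real content of Lemma 10.1, so citing it is fine, but your stated reason is not.
\item $\Hom(T,\OO_x)\neq 0$ does not prevent a torsion sheaf $T$ from lying in $\FF$, since a torsion pair only requires $\Hom(\TT,\FF)=0$. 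The correct reason is that objects of $\FF$ are $H^{-1}$ of objects of $\AA$, hence torsion-free. So the $\FF$-quotient of a torsion sheaf is zero.
\end{enumerate}
Similarly, ``otherwise $F\in\FF$'' in your second and third facts is a false dichotomy. Apply the $\Im Z$ sign argument to the $\FF$-quotient (resp.\ $\TT$-subsheaf) of $F$, whose $\mu_{B,\omega}$-HN slopes are $\ge$ (resp.\ $\le$) $\mu_{B,\omega}(F)$.
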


\begin{corollary}\label{product=tilt}
    There is $\lambda\in\C$ with $e^{-i\pi\lambda}=-\alpha_1\alpha_2$ such that
    $\sigma^{\gamma_1}_{C_1}\otimes\sigma^{\gamma_2}_{C_2}=\lambda\cdot\sigma_{\omega,B}.$
\end{corollary}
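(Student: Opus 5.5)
The plan is to apply the $\C$-action and reduce to Proposition~\ref{geometric-stab}. Let $\sigma:=\sigma^{\gamma_1}_{C_1}\otimes\sigma^{\gamma_2}_{C_2}$. By Lemma~\ref{central-charge} its central charge is $-\alpha_1\alpha_2 Z_{\omega,B}$, with $\omega$ ample. Since $\alpha_i\neq 0$, we may choose $\lambda\in\C$ with $e^{-i\pi\lambda}=-\alpha_1\alpha_2$. This is unique up to adding even integers, and the choice will be pinned down below by the phase of skyscrapers. Set $\tau:=(-\lambda)\cdot\sigma$. By construction $\tau$ has central charge
\[
e^{i\pi\lambda}(-\alpha_1\alpha_2)Z_{\omega,B}=Z_{\omega,B}.
\]
Once we show $\tau=\sigma_{\omega,B}$, acting by $\lambda$ gives $\sigma=\lambda\cdot\sigma_{\omega,B}$, which is the statement.

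Next we check the skyscraper condition. Each point $x=(x_1,x_2)\in S$ has $\OO_x=\OO_{x_1}\boxtimes\OO_{x_2}$. On each curve, skyscrapers are stable of a fixed phase for $\sigma^{\gamma_i}_{C_i}$, since every stability condition on a curve of genus $\geq1$ is a $\widetilde{\GL}^+_2(\R)$-translate of $(Z_0,\Coh(C_i))$. Liu's construction \cite{Liu-product} (in the form of \cite{LMPSZ}) yields geometric stability conditions when both factors are geometric, so $\OO_x$ is $\sigma$-stable and all of them share a common phase $\phi_0$. Their central charge is
\[
Z(\OO_x)=-\alpha_1\alpha_2 Z_{\omega,B}(\OO_x)=\alpha_1\alpha_2 .
\]
So $\phi_0\equiv \arg(\alpha_1\alpha_2)/\pi \pmod 2$. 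We now choose $\lambda$ within its class mod $2\Z$ so that, after the shift, $\OO_x$ has phase exactly $1$ for $\tau$. This is consistent, because $Z_\tau(\OO_x)=Z_{\omega,B}(\OO_x)=-1$ already forces the phase to be an odd integer, and adjusting $\lambda$ by an even integer moves the phase to exactly $1$.

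Proposition~\ref{geometric-stab} now applies to $\tau$ on $S$. Its central charge is $Z_{\omega,B}$ with $\omega$ ample, and every skyscraper is $\tau$-stable of phase $1$. Hence $\tau=\sigma_{\omega,B}$, as required.

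The main obstacle is justifying that skyscrapers are stable for the product stability condition, rather than merely semistable. I would extract this from \cite{Liu-product}/\cite{LMPSZ}, where product stability conditions built from geometric ones are shown to be geometric. If only semistability is available, a fallback is to use that the class of $\OO_x$ is primitive and minimal with $\Im Z_{\omega,B}=0$ in the heart. That argument should still suffice for Bridgeland's characterization, since any destabilizing subobject would have to be a $0$-dimensional sheaf of smaller length, which is impossible for a single point.
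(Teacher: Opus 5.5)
Your proposal is correct and is essentially the paper's proof. The central charge comes from Lemma~\ref{central-charge}, $\lambda$ is chosen with $e^{-i\pi\lambda}=-\alpha_1\alpha_2$ and its real part normalized so that skyscrapers have phase $1$ for $(-\lambda)\cdot\sigma$ (the paper's choice $\Re\lambda=1-\phi$ is exactly your choice within the class mod $2\Z$), and Proposition~\ref{geometric-stab} finishes the argument.

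Like you, the paper simply takes stability of skyscrapers for the product condition as an input from Liu's construction, so your fallback is not needed. As sketched it would not work anyway. Describing subobjects of $\OO_x$ as sheaves presupposes that the heart is the tilted one, which is the conclusion being proved. Also, phase-$1$ classes are not bounded below by $|Z(\OO_x)|$: for instance $\OO_S[1]$ lies in the heart with $|Z|<1$ when $B=0$ and $\omega^2<2$.
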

\begin{proof}
    By Lemma~\ref{central-charge}, $\sigma^{\gamma_1}_{C_1}\otimes\sigma^{\gamma_2}_{C_2}$ has central charge $-\alpha_1\alpha_2 Z_{\omega,B}$, and skyscraper sheaves are stable, all of one phase $\phi$. Take $\lambda$ with $e^{-i\pi\lambda}=-\alpha_1\alpha_2$ and $\Re\lambda=1-\phi$. Then $(-\lambda)\cdot(\sigma^{\gamma_1}_{C_1}\otimes\sigma^{\gamma_2}_{C_2})$ has central charge $Z_{\omega,B}$ and slicing $\PP''(1)=\PP(1-\Re\lambda)=\PP(\phi)$. Thus skyscraper sheaves stable of phase $1$, so it equals $\sigma_{\omega,B}$ by Prop.~\ref{geometric-stab}.
\end{proof}

In particular, the product stability conditions lie in the $\C$-orbits of tilt stability conditions.

\subsection{Inducing stability conditions along a free quotient}
Now let $G$ act freely on $S=C_1\times C_2$ and let $X=S/G$ with $\pi:S\to X$ the quotient map. Since the action is free $\pi$ is \'etale of degree $|G|$. In this case, we get the equivalence,
\[
\Db(X)\cong \Dbg(S),
\]
and $\Forg:\Dbg(S)\to\Db(S)$ is just $\pi^*.$ In \cite{MMS}, the authors proved that 
\[
\sigma_G=(Z\circ\operatorname{Forg},\ \mathcal P_G),\qquad \mathcal P_G(\phi)=\{F\in \Dbg(S):\operatorname{Forg}(F)\in\mathcal P(\phi)\},
\]
is a stability condition on $\Dbg(S)$.

\begin{proposition}\label{prop:free}
Let $\operatorname{NS}(S)^G_{\R}=(\operatorname{NS}(S)_{\R})^G$ be the $G$-invariant part of the N\'eron-Severi space of $S$ and let $\omega,B\in\operatorname{NS}(S)_{\mathbb R}^G$ where $\omega$ is ample. Take $\omega_X,B_X\in\operatorname{NS}(X)_{\mathbb R}$ the induced classes, i.e. $\pi^*\omega_X=\omega$, $\pi^*B_X=B$. Then $\sigma_{\omega,B}$ is $G$-invariant, and under the equivalence $\Db(X)\cong\Dbg(S),$
\[
(\sigma_{\omega,B})_G=|G|\cdot\sigma_{\omega_X,B_X}.
\]
\end{proposition}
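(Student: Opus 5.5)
We will prove the statement in three steps: first establish $G$-invariance of $\sigma_{\omega,B}$, then compare central charges under the equivalence $\Db(X)\cong\Dbg(S)$, and finally identify the induced slicing using Proposition~\ref{geometric-stab}.

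\emph{Invariance.} For $g\in G$, the pushforward $g_*$ (equivalently $g^*$) sends $\sigma_{\omega,B}=(Z_{\omega,B},\AA_{\omega,B})$ to $(Z_{g^*\omega,g^*B},\AA_{g^*\omega,g^*B})$. The reason is that $\ch(g^*E)=g^*\ch(E)$ and $\int_S g^*(\cdot)=\int_S(\cdot)$. Also, $g^*$ preserves $\mu_{\omega,B}$-stability up to replacing $(\omega,B)$ by $(g^*\omega,g^*B)$, so it carries $\TT,\FF$ to the corresponding torsion pair. Since $\omega,B$ are $G$-invariant, $\sigma_{\omega,B}$ is fixed. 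Hence the induction of \cite{MMS} applies and gives $(\sigma_{\omega,B})_G=(Z_{\omega,B}\circ\Forg,\PP_G)$.

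\emph{Central charge.} Under the equivalence, an object $F\in\Db(X)$ corresponds to $\pi^*F$ with its canonical linearization, and $\Forg$ is just $\pi^*$. Because $\pi$ is étale of degree $|G|$, we have $\ch(\pi^*F)=\pi^*\ch(F)$ and $\int_S\pi^*\alpha=|G|\int_X\alpha$. Combining these with $\pi^*(B_X+i\omega_X)=B+i\omega$ gives
\[
Z_{\omega,B}(\pi^*F)=-\int_S\pi^*\bigl(e^{-(B_X+i\omega_X)}\ch(F)\bigr)=|G|\,Z_{\omega_X,B_X}(F).
\]
Note also that $\omega_X$ is ample, since $\pi$ is finite and $\pi^*\omega_X$ is ample.

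\emph{Slicing.} We then check that every skyscraper $\OO_x$ on $X$ is stable of phase $1$ for the induced stability condition. Its image $\pi^*\OO_x=\bigoplus_{y\in\pi^{-1}(x)}\OO_y$ is a direct sum of $\sigma_{\omega,B}$-stable objects of phase $1$. It is therefore semistable of phase $1$, so $\OO_x\in\PP_G(1)$. To get stability, note that a $G$-equivariant subobject of $\pi^*\OO_x$ in $\PP(1)$ of the same phase must be a direct sum of some of the $\OO_y$. Being $G$-stable, and $G$ acting transitively on the fibre, it is either $0$ or everything. This gives stability in $\PP_G(1)$, after noting that the Jordan–Hölder factors of the forgotten object are among the $\OO_y$.

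We now rescale by the real factor $|G|$, which lies in $\widetilde{\GL}^+(2,\R)$ and does not change phases. The rescaled condition has central charge $Z_{\omega_X,B_X}$ and all skyscrapers are stable of phase $1$. Proposition~\ref{geometric-stab} then identifies it with $\sigma_{\omega_X,B_X}$, which yields the formula, with $|G|\cdot$ interpreted as this rescaling.

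The main obstacle is the stability of $\OO_x$ in the induced slicing. The cleanest route is to use the fact from \cite{MMS} that $\Forg(E)$ of a $\sigma_G$-stable object is polystable, together with transitivity of $G$ on the fibre $\pi^{-1}(x)$.
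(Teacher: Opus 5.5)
Your proposal is correct and follows essentially the same route as the paper. Both arguments get $G$-invariance from $g^*$ fixing $(\omega,B)$, compute $Z_{\omega,B}(\pi^*F)=|G|\,Z_{\omega_X,B_X}(F)$ by the projection formula, and prove stability of $\OO_x$ by observing that $G$-invariant subobjects of $\pi^*\OO_x=\bigoplus_{y}\OO_y$ in $\PP(1)$ are sums over $G$-invariant subsets of a single free orbit, before concluding with Proposition~\ref{geometric-stab} after rescaling by $|G|$. Your ampleness step, which deduces that $\omega_X$ is ample from $\pi^*\omega_X$ being ample for finite $\pi$, runs in the direction that is actually needed.
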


\begin{proof}
Since $\pi$ is finite, $\pi^*\omega_X=\omega$ is ample, and in fact $\pi^*:\operatorname{NS}(X)_{\mathbb R}\to\operatorname{NS}(S)_{\mathbb R}^G$ is an isomorphism for a finite free quotient. Thus, for $g\in G$, $g^*$ preserves $\omega,B$, and thus $\sigma_{\omega,B}$ is $G$-invariant. 

Since $\pi$ is \'etale, hence flat, $\pi^*$ is exact and $\L\pi^*E=\pi^*E$ for any $E\in\Db(X)$. The projection formula gives,
\begin{align*}
    Z_{\omega,B}(\pi^*E)&=-\int_S e^{-B-i\omega}\ch(\pi^*E)\\
    &=-\int_S e^{-B-i\omega}\pi^*\ch(E)\\
    &=-\int_S \pi^*\left(e^{-B_X-i\omega_X}\ch(E))\right)\\
    &=-|G|\int_S e^{-B_X-i\omega_X}\ch(E)) = |G|Z_{\omega_X,B_X}(E).
\end{align*}
For $x\in X,$ $\Forg(\OO_x)=\pi^*(\OO_x)=\oplus_{p\in\pi^{-1}(x)}\OO_p$. $\OO_p$ is $\sigma_{\omega,B}$-stable of phase $1,$ so $\pi^*\OO_x\in\PP(1)$ and by definition $\OO_x\in\PP_G(1).$ In other words $\OO_x$ is $(\sigma_{\omega,B})_G$-semistable of phase $1,$ and a destabilizing subobject must share the same phase. Suppose $E\subset\OO_x$ is a proper non-zero subobject of $\OO_x$ in $\PP_G(1).$ Then $\Forg(E)\subseteq \oplus_{p\in\pi^{-1}(x)} \OO_p$ in $\PP(1).$ Each $\OO_p$ is stable in $\PP(1),$ thus simple. This implies a subobject $\Forg(E)$ is of the form $\oplus_{p\in T}\OO_p$, where $T\subseteq\pi^{-1}(x).$ But notice that the underlying support $T$ must be $G$-invariant. The set $\pi^{-1}(x)$ is one free $G$-orbit, so $G$ acts transitively on it. This implies $T=\emptyset$ or $T=\pi^{-1}(x).$ But $E$ is non-empty and a proper subset, so both the possibilities are ruled out. So $\OO_x$ is $\sigma_G$-stable of phase one $\forall x\in X.$

Thus the stability condition $|G|^{-1}\cdot(\sigma_{\omega,B})_G$ on $\Db(X)$ has the central charge $Z_{\omega_X,B_X}$ and all skyscraper sheaves are stable of phase $1,$ so we conclude it is equal to $\sigma_{\omega_X,B_X}$ by proposition \ref{geometric-stab}.
\end{proof}

\begin{theorem}\label{Thm:free case}
    Let $G$ act freely on $C_1\times C_2$ with $X=(C_1\times C_2)/G$. Every product stability condition on $C_1\times C_2$ is $G$-invariant, and the induced stability condition on $\Db(X)$ is $\sigma_{\omega_X,B_X}$ up to the action of $\C$ for some $\omega_X,B_X\in \NS(X)_\R$, with $\omega_X$ ample. 
    
    Moreover, if $p_g(X)=q(X)=0$, then every tilt stability condition $\sigma_{\omega,B}$ on $X$ arises in this way.
\end{theorem}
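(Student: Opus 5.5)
The first part will follow by combining Corollary \ref{product=tilt} with Proposition \ref{prop:free}. A product stability condition $\sigma^{\gamma_1}_{C_1}\otimes\sigma^{\gamma_2}_{C_2}$ equals $\lambda\cdot\sigma_{\omega,B}$ with $B+i\omega=\gamma_1f_1+\gamma_2f_2$, by Corollary \ref{product=tilt}. Since $f_1,f_2$ are $G$-invariant, $\omega$ and $B$ lie in $\NS(S)^G_\R$. Moreover, $\omega$ is ample because $\Im\gamma_i>0$.

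By Proposition \ref{prop:free}, $\sigma_{\omega,B}$ is $G$-invariant. The $\C$-action commutes with pullback by $g\in G$, since $g^*$ acts on $\Stab(S)$ through $\Aut(\Db(S))$, and the two actions commute. Hence $\lambda\cdot\sigma_{\omega,B}$ is also $G$-invariant.

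Next I will check that induction commutes with the $\C$-action. The induced central charge is $Z\circ\Forg$, and the induced slicing is $\PP_G(\phi)=\Forg^{-1}\PP(\phi)$. Both rotate compatibly under $\lambda$, so $(\lambda\cdot\sigma)_G=\lambda\cdot\sigma_G$. Combining this with Proposition \ref{prop:free} gives
\[
(\sigma^{\gamma_1}_{C_1}\otimes\sigma^{\gamma_2}_{C_2})_G=\lambda\cdot|G|\cdot\sigma_{\omega_X,B_X},
\]
which is in the $\C$-orbit of $\sigma_{\omega_X,B_X}$ with $\omega_X$ ample. Here $|G|$ acts as a real scalar, which is an element of the $\C$-action (imaginary part $-\log|G|/\pi$).

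For the converse, assume $p_g=q=0$. The key step is to show that $\NS(X)_\R$ is two-dimensional and spanned by the classes $\overline f_1,\overline f_2$ with $\pi^*\overline f_i=|G|\,$-multiples of $f_i$; more precisely, I want $\pi^*\NS(X)_\R=\R f_1\oplus\R f_2$. I will argue as follows.
\begin{enumerate}
\item Since $X$ is of general type with $p_g=q=0$, we have $\chi(\OO_X)=1$. Because $\pi$ is étale of degree $|G|$, $\chi(\OO_S)=|G|$, and also $K_X^2=8\chi(\OO_X)=8$, which is consistent with the product-quotient setting.
\item Noether's formula gives $e(X)=12-8=4$. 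Since $q=0$, $b_1=0$, so $b_2=2$. Since $p_g=0$, $h^{1,1}=b_2=2$, and hence $\rho(X)=2$.
\item The $G$-invariant classes $f_1,f_2$ are linearly independent, because $f_1f_2=1$. As $\pi^*:\NS(X)_\R\to\NS(S)^G_\R$ is an isomorphism (as used in Proposition \ref{prop:free}), $\NS(S)^G_\R=\R f_1\oplus\R f_2$.
\end{enumerate}

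Given an ample $\omega_X$ and any $B_X$, write $\pi^*(B_X+i\omega_X)=\gamma_1f_1+\gamma_2f_2$. Ampleness of $\pi^*\omega_X$ forces $\Im\gamma_i>0$, since $af_1+bf_2$ is ample if and only if $a,b>0$. By Lemma \ref{stability-on-curves}, $Z^{\gamma_i}$ is a central charge on $C_i$. The forward direction then shows that the induced condition of $\sigma^{\gamma_1}_{C_1}\otimes\sigma^{\gamma_2}_{C_2}$ is $\sigma_{\omega_X,B_X}$ up to $\C$.

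The main obstacle is step (2), namely verifying $\rho(X)=2$ via Hodge theory. Apart from that, the converse consists of bookkeeping.
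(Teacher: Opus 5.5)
Your proposal is correct and follows the paper's proof: the first part combines Corollary \ref{product=tilt} with Proposition \ref{prop:free}, and the converse uses Noether's formula to get $\rho(X)=2$, shows $\NS(X)_\R$ is spanned by classes pulling back to multiples of $f_1,f_2$, and so realizes any $B_X+i\omega_X$ by some $\gamma_1,\gamma_2\in\H$. The differences are cosmetic. You use $h^{1,1}=b_2$ and Lefschetz $(1,1)$ instead of the exponential sequence, identify $\NS(X)_\R$ through the isomorphism $\pi^*$ onto $\NS(S)^G_\R$ rather than through the fibre classes $F_i$, derive $K_X^2=8$ from $K_S^2=8\chi(\OO_S)$ and the \'etale cover rather than citing the classification, and explicitly check that induction commutes with the $\C$-action, a step the paper leaves implicit.
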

\begin{proof}
    Let $\sigma_i\in\Stab(C_i)$ and $\sigma_1\otimes\sigma_2\in\Stab(C_1\times C_2)$ be the product. Corollary \ref{product=tilt} implies $\sigma_1\otimes\sigma_2 =\lambda\cdot\sigma_{\omega,B}$ with $\omega,B\in \R f_1\oplus\R f_2$. Since these classes are $G$-invariant, so is $\sigma_1\otimes\sigma_2$. By Proposition \ref{prop:free}, the induced stability condition is $\lambda\, |G|\cdot\sigma_{\omega_X,B_X},$ hence the result follows.
    
    Since $p_g(X)=q(X)=0$, we have $\chi(\OO_X)=1$, so Noether's formula gives $e(X)=12\chi(\OO_X)-K_X^2=4$ (following \cite{BP12}, $K_X^2=8$). As $b_1(X)=2q(X)=0$ and $b_3(X)=b_1(X)$ by Poincaré duality, $b_2(X)=e(X)-2=2$. Now, $p_g(X)=q(X)=0$; thus the exponential sequence gives $\Pic(X)\cong H^2(X,\Z)$; in particular $\rho(X)=b_2(X)=2$. Let $F_i\in\NS(X)$ be the classes of fibers of the fibrations $X\to C_i/G,$ for $i=1,2.$ On $S,$ $\pi^*F_i$ are positive multiples of $f_i.$ This gives $F_1\cdot F_2>0$ and $F_i^2=0.$ Hence, $F_1, F_2$ are linearly independent in $\NS(X)_{\R}.$ This implies $\NS(X)_{\R} =\R F_1\oplus\R F_2.$ By Lemma \ref{central-charge}, we can choose $\gamma_i\in\mathbb{H}$ and produce any pair $(\omega_X,B_X)\in\R F_1\oplus\R F_2,$ and hence, every $\sigma_{\omega_X,B_X}.$ 
\end{proof}

\begin{remark}
    Following \cite{Dell-Heng-Licata}, we know that $\Stabgeo(X)$ forms a connected component of $\Stab(X)$ in the free-action cases. Thus, the existence of a non-geometric stability condition would imply $\Stab(X)$ is not connected. In all of the thirteen cases, there exist max-length exceptional collections of line bundles on $\Db(X)$ \cite{Galkin-Shinder,Lee1,Lee2,Lee3,Lee4}. But the collections are not full, and the orthogonal subcategory is a \emph{quasi-phantom}. It is known that there does not exist any stability condition on a quasi-phantom. This is why we cannot glue stability conditions along these exceptional collections to construct new stability conditions. In other words, if there does exist a non-geometric stability condition on $\Db(X),$ then it cannot be constructed by gluing from the exceptional collection. On the other hand, the canonical bundle is ample; thus $\Aut(\Db(X))$ is generated by the standard autoequivalences, and these preserve $\Stabgeo(X).$ This implies there's no interesting autoequivalence whose action can give rise to non-geometric stability conditions. So a natural question to ask,
    \begin{question}
        In the cases when $G$ acts freely on $C_1\times C_2$, i.e. when $X=(C_1\times C_2)/G$, is $\Stab(X)=\Stabgeo(X)$?
    \end{question}
\end{remark}

\section{When the action has fixed points}
In this section we construct non-geometric stability conditions on $X$, when the quotient $(C_1\times C_2)/G$ has only $A_1$-singularities. Following the classification \cite[Table 1,2]{BP12}, there are more than $20$ families of surfaces that fit into this. We do not rely on the product structure anymore, and prove our results in a general setup.

\subsection{Setup - the derived McKay correspondence}
Let $Y$ be a smooth quasi-projective variety with an action of a finite group $G$. Let $\omega_Y$, the canonical sheaf of $Y$ be locally trivial as a $G$-sheaf, equivalently, the stabilizer $G_p\subset\SL(T_{p}Y)$ for every $p\in Y.$ A $G$-cluster $Z\subset Y$ is a $G$-invariant $0$-dimensional subscheme, such that $\dim_\C H^0(\OO_Z)=|G|$ and as a $G$-representation $H^0(\OO_Z)\cong \C[G],$ the regular representation. The $G$-Hilbert scheme $G\text{-}\!\Hilb(Y)$ parametrizes $G$-clusters on $Y.$ The universal family $\ZZ\subset G\text{-}\!\Hilb(Y)\times Y$ is a closed subscheme, with $\ZZ_x\subset Y$ the $G$-cluster represented by $x.$

In \cite{BKR}, under a dimension condition, the authors prove that $\GHilb(Y)\to Y/G$ is a crepant resolution and the Fourier-Mukai equivalence of derived categories,
\[
\Db(\GHilb(Y))\simeq  \Dbg(Y).
\]
When $\dim Y\leq 3,$ the dimension condition automatically follows. Recall that a pseudo-reflection of a complex vector space $V$ is a non-trivial linear auotomorphism $s\in \GL(V)$ whose fixed subspace is a hyperplane. If $\dim V=2$, then $s$  takes the form 
\[
s=\begin{pmatrix}
1&0\\
0&\lambda
\end{pmatrix}
\]
in some suitable basis, where $\lambda\neq 1.$ Hence $\det(s)=\lambda\neq 1.$

From now on, let $Y$ be a smooth projective surface and $G$ a finite group acting faithfully on $Y$ such that the canonical sheaf $\omega_Y$ is locally trivial as a $G$-sheaf and $Y/G$ has only $A_1$ singularities. 

\begin{lemma}
Every nontrivial stabilizer $G_p$ is $\mathbb Z/2$
acting on $T_pY$ by $-\mathrm{id}$, and the set $P$ of such points is finite.
\end{lemma}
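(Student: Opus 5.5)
Let $p\in Y$ with nontrivial stabilizer $G_p$. The plan is to linearize the action at $p$, apply the classification of quotient singularities together with the $A_1$ hypothesis to identify $G_p$, and then prove finiteness separately.

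First we linearize. Since $G_p$ is finite and $p$ is fixed, by Cartan's linearization (averaging a local coordinate system over $G_p$) there are analytic coordinates near $p$ in which $G_p$ acts linearly through its faithful representation on $T_pY$. Faithfulness follows because a finite-order automorphism of a connected variety that fixes $p$ and acts trivially on $T_pY$ is the identity near $p$, hence everywhere, and $G$ acts faithfully. Hence, analytically locally, the germ of $Y/G$ at the image of $p$ is isomorphic to $(\C^2/G_p,0)$. This uses that the $G$-orbit of $p$ consists of $|G/G_p|$ distinct points, so a small neighbourhood of $p$ maps isomorphically to its quotient by $G_p$.

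Next we identify $G_p$. The hypothesis that $\omega_Y$ is locally trivial as a $G$-sheaf gives $G_p\subset \SL(T_pY)\cong\SL_2(\C)$. In particular $G_p$ contains no pseudo-reflections, since, as recalled above, a pseudo-reflection $s$ in dimension $2$ has $\det s\neq 1$. So $G_p$ is a small subgroup of $\SL_2(\C)$, and $\C^2/G_p$ is singular at $0$ whenever $G_p\neq 1$. By the classical classification of Du Val singularities, the analytic type of $\C^2/H$ for a finite $H\subset\SL_2(\C)$ determines $H$ up to conjugacy; more precisely, the local fundamental group of the link of the singularity is $H$ itself, because $H$ acts freely on $\C^2\setminus\{0\}$. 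An $A_1$-singularity is $\C^2/\{\pm\mathrm{id}\}$, with local fundamental group $\Z/2$. Therefore $G_p\cong\Z/2$. Its nontrivial element $g$ lies in $\SL_2$ and satisfies $g^2=1$, so its eigenvalues are $\pm1$ with product $1$. They cannot both be $1$ (otherwise $g=\mathrm{id}$, as $g$ is diagonalizable of finite order), so both are $-1$ and $g=-\mathrm{id}$. The main obstacle is precisely this step: justifying that the singularity type recovers the group, which I would do via the local fundamental group argument rather than by citing Klein's list wholesale.

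Finally we show finiteness. The set $P$ is the union over $g\in G\setminus\{1\}$ of the fixed loci $Y^g$. Each $Y^g$ is a closed smooth subvariety whose tangent space at a point $p$ is $(T_pY)^g$. By the previous step, $g$ acts as $-\mathrm{id}$ at every fixed point, so $(T_pY)^g=0$. Hence $Y^g$ is zero-dimensional, and it is finite since $Y$ is projective. As $G$ is finite, $P$ is finite.
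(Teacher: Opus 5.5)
Your proof is correct, and its skeleton matches the paper's. In both, the $\mathrm{SL}$ condition rules out pseudo-reflections, the local model $(Y/G,[p])\cong(\mathbb{C}^2/G_p,0)$ forces the germ to be singular and hence of type $A_1$, and the $A_1$ type then pins down $G_p$. The differences lie in how the steps are justified, and in the finiteness claim.

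\emph{Identification of $G_p$.} The paper asserts the local model without comment. It cites Chevalley--Shephard--Todd to get singularity of the quotient, and reads off $G_p\cong\mathbb{Z}/2$ acting by $-\mathrm{id}$ from the classification of du Val singularities. You instead derive the local model from Cartan linearization together with faithfulness of $G_p$ on $T_pY$. You then replace both citations by one local fundamental group computation: $G_p$ acts freely on the punctured plane, so the local fundamental group of the germ is $G_p$. That group is trivial at a smooth point and $\mathbb{Z}/2$ at an $A_1$ point. An elementary eigenvalue argument in $\mathrm{SL}_2$ then gives $-\mathrm{id}$.

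\emph{Finiteness of $P$.} The paper's proof only identifies the stabilizers and never argues that $P$ is finite. Your fixed-locus argument supplies this: at every fixed point of $g\neq 1$ the element acts by $-\mathrm{id}$, so $T_p(Y^g)=(T_pY)^g=0$. Hence each $Y^g$ is zero-dimensional and finite, and $P$ is a finite union of such sets.

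The paper's version is shorter because it leans on citations. Yours is self-contained and proves the full statement.
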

\begin{proof}
    From the locally trivial hypothesis, we have $G_p\subset\SL(T_pY)$ \cite[\S 1]{BKR}. If $G_p$ contained some pseudo-reflection $g$ of $T_pY,$ that would mean $\det g\neq 1,$ contradicting $G_p\subset\SL(T_pY)$. Hence, $G_p$ contains no pseudo-reflections.

    We have the local quotient description $(Y/G,[p])\cong (T_pY/G_p,0).$ The quotient is smooth if and only if $G_p$ is generated by pseudo-reflections (\cite{Chevalley1955, ShephardTodd1954}). Because $G_p$ has no pseudo-reflections,  $T_pY/G_p$ is singular, hence by assumption an $A_1$ singularity. By the classification of du Val singularities, a finite subgroup of $\SL_2(\C)$ with quotient of type $A_1$ is cyclic of order $2$, i.e. $G_p\cong \Z/2$ and its nontrivial element acts by $(u,v)\mapsto (-u,-v)$, in other words, by $-\mathrm{id}_{T_pY}$.
\end{proof}

Let $P\subset Y$ be the set of points with non-trivial stabilizer and $P=P_1\sqcup\dots \sqcup P_n$ be the decomposition into $G$-orbits, where $|P_j|=|G|/2.$ The quotient $\pi_Y:Y\to \overline{Y}=Y/G$ has $n$ $A_1$ singularities. Write $\pi_X:X\to \overline{Y}$ for the minimal resolution and $E_j$ for the $(-2)$-curves over the images of $P_j.$ 

Take $q:\widetilde{Y}=\operatorname{Bl}_P(Y)\to Y$ to be the blow-up and write $e_p$ for the exceptional curve over $p\in P.$ Since $P$ is $G$-invariant, the $G$-action lifts to the blow-up $q:\widetilde{Y}=\operatorname{Bl}_P(Y)\to Y$; write $e_p$ for the exceptional curve over $p\in P$ and $p: \widetilde{Y}\to \widetilde{Y}/G$ for the quotient map. 

\begin{lemma}
    $\widetilde Y/G$ is smooth and the induced morphism $\widetilde Y/G\to\overline Y$ is the minimal resolution; hence $\widetilde Y/G\cong X$. The images of the exceptional curves $e_p$, for $p\in P_j$, coincide with the exceptional $(-2)$-curves $E_j$ over the corresponding $A_1$-singularities.
\end{lemma}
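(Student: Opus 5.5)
The argument is local: it suffices to check smoothness and the shape of the exceptional locus analytically near each point of $\widetilde Y/G$. Away from $q^{-1}(P)$ the group acts freely on $\widetilde Y$, because $q$ is a $G$-equivariant isomorphism there and $G$ acts freely on $Y\setminus P$. Hence $\widetilde Y/G$ is smooth there, and the induced map $\widetilde Y/G\to\overline Y$ is an isomorphism onto $\overline Y\setminus\pi_Y(P)$. So we only need to study a neighbourhood of $e_p$ for $p\in P_j$.

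Fix $p\in P_j$. By the previous lemma, $G_p=\Z/2$ acts on $T_pY$ by $-\mathrm{id}$. By Cartan linearization we choose $G_p$-equivariant local coordinates $(u,v)$ at $p$ in which the nontrivial element $\iota$ acts by $(u,v)\mapsto(-u,-v)$. A neighbourhood of $q^{-1}(G\cdot p)$ in $\widetilde Y$ is $G\times_{G_p}U$, where $U$ is a neighbourhood of $e_p$. Therefore the quotient near the image of $e_p$ is $U/G_p$, and the orbit $P_j$ contributes a single curve in the quotient.

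On the blow-up chart with coordinates $(u,t)$, $v=ut$, the involution acts by $(u,t)\mapsto(-u,t)$. This is a reflection, and it fixes $e_p=\{u=0\}$ pointwise. The same holds on the other chart. Hence $U/G_p$ is smooth with local coordinates $(u^2,t)$, and the image of $e_p$ is a smooth rational curve $E$.

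For the self-intersection, the quotient map $U\to U/G_p$ is a double cover branched along $e_p$. Therefore $\varphi^*E=2e_p$, and
\[
4e_p^2=(\varphi^*E)^2=2E^2 .
\]
Since $e_p^2=-1$, this gives $E^2=-2$. Alternatively, one can check directly that the quotient is the total space of $\OO(-2)$ over $\P^1$, namely the standard resolution of the $A_1$-singularity $\C^2/\pm1$.

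The map $\widetilde Y/G\to\overline Y$ is proper and birational, and it contracts exactly these $n$ curves. The resolution contains no $(-1)$-curves in the exceptional locus, since each exceptional component has self-intersection $-2$. So it is the minimal resolution, which is unique, and hence $\widetilde Y/G\cong X$ with the image of $e_p$ equal to $E_j$ for $p\in P_j$.

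The main point requiring care is the equivariant local model, namely the linearization and the reduction from $G$ to $G_p$ near an orbit. Both are standard: the stabilizer is finite, so one can average a coordinate system over $G_p$, and the slice $G\times_{G_p}U$ handles the passage from the whole orbit to a single point.
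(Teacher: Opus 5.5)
Your proposal is correct and follows essentially the same route as the paper. Both use the chart computation showing that the lifted involution is the reflection $(u,t)\mapsto(-u,t)$ fixing $e_p$ pointwise, deduce smoothness of the quotient with coordinates $(u^2,t)$, identify the image of $e_p$ as a $(-2)$-curve, and finish with minimality plus uniqueness of the minimal resolution. Your self-intersection computation via $\varphi^*E=2e_p$ and your explicit slice reduction $G\times_{G_p}U$ are only minor, slightly more careful variants of the paper's appeal to $\operatorname{Tot}(\OO_{\P^1}(-1))/\{\pm1\}\cong\operatorname{Tot}(\OO_{\P^1}(-2))$ and its orbit argument $e_{g(p)}=g(e_p)$.
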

\begin{proof}
    The morphism $q:\widetilde{Y}\to Y$ induces a proper birational morphism $\pi_X:\widetilde{Y}/G\to \overline{Y}.$ Away from $P$ the the map $q$ is an isomorphism and $G$ acts freely. Thus $\pi_X$ is an isomorphism over the smooth locus of $\overline{Y}.$ By the previous lemma, we know that $G_p\cong\Z/2 = \{1,\iota\}$ for any $p\in P$ and $\iota\cdot(x,y)=(-x,-y)$ for some chosen coordinates near $p.$ After quotenting, we have an $A_1$ singularity, locally $\C^2/\{\pm 1\}.$ 
    
    On the chart $y=xt,$ the coordinates are $(x,t)$ and the lifted involution $(x,t)\mapsto (-x,t).$ Thus after quotienting this chart gets the $\Z/2$-invariant coordinates $(u,t)=(x^2,t)$ and is smooth. Similarly, one can show that the other chart $x=sy$ also gives another smooth affine chart. The quotient of the blow-up charts gives us the maps
    \[
    (u,t)\mapsto (u,ut,ut^2),\qquad (s,v)\mapsto (s^2v,sv,v).
    \]
    These maps are isomorphisms away from the origin. Over the origin the fiber is $\{u=0\}$ glued to $\{v=0\}$ by $s=t^{-1}.$ Thus the exceptional curve is $\P^1\cong\P(T_pY).$ Since $-\mathrm{id}$ acts trivially on $\P(T_pY),$ the image in the quotient too is $\P^1.$  Moreover, the involution acts by $-1$ in the normal direction. Thus
\[
\operatorname{Tot}\bigl(\OO_{\P^1}(-1)\bigr)/\{\pm1\}
\cong
\operatorname{Tot}\bigl(\OO_{\P^1}(-2)\bigr),
\]
so the image of the exceptional curve has self-intersection $-2$. Therefore $\pi_X$ is locally the minimal resolution of an $A_1$ singularity.

For the second part, let $p,p'\in P_j.$ Then $p'=g(p)$ for some $g\in G$ and thus $e_{p'}=g(e_p).$ Thus all the curves $e_p$ for $p\in P_j$ maps to the same curve under $p:\widetilde{Y}\to\widetilde{Y}/G.$ The image is the unique exceptional $(-2)$-curve over the corresponding $A_1$ singularity, $E_j.$ The exceptional curves $E_j$ on $\widetilde{Y}/G$ are $(-2)$-curves; this implies $\widetilde{Y}/G$ is the minimal resolution and uniqueness gives us $\widetilde{Y}/G\cong X.$
\end{proof}
We get the commutative diagram,
\[\begin{tikzcd}
	{\widetilde{Y}} & Y \\
	X & {\overline{Y}}
	\arrow["q", from=1-1, to=1-2]
	\arrow["p"', from=1-1, to=2-1]
	\arrow["{\pi_Y}", from=1-2, to=2-2]
	\arrow["{\pi_X}"', from=2-1, to=2-2]
\end{tikzcd}\]
The quotient $p:\widetilde{Y}\to X$ is a  finite map of degree $|G|$ between smooth projective surfaces, hence flat. We have,
\begin{equation}\label{eq:pullbackE}
p^*E_j=2\sum_{p\in P_j}e_p,\qquad
p_*e_p=E_j\quad (p\in P_j),\qquad
q_*e_p=0.
\end{equation}
By Bridgeland-King-Reid \cite{BKR},
\[
\Phi:=\R q_*p^*: \Db(X)\xlongrightarrow{\sim}\Dbg(Y)
\]
is an equivalence.

Since $p$ is the quotient map, $p^*F$ carries the canonical $G$-linearization and naturally $p^*F\in \Dbg(\widetilde{Y}).$ Thus under $\Phi$,
\[
F\longmapsto \R q_*(p^*F\text{ with canonical $G$-linearization}).
\]
After forgetting the equivariant structure, one obtains $\Forg\,\circ\,\Phi=\R q_*p^*:\Db(X)\to\Db(Y).$ In other words, the underlying object of $\Phi(E)\in\Dbg(Y)$ is $\R q_*p^*E$; the extra datum in $\Phi(E)$ is the canonical $G$-linearization.

\begin{lemma}\label{lem:action-of-phi}
\begin{enumerate}
\item If $x\notin E_j\subset X$, then $\Phi(\mathcal O_x)$ is the equivariant structure sheaf $\pi_Y^{-1}(\pi_X(x))$. If $x\in E_j$, then $\Phi(\mathcal O_x)=\mathcal O_{Z_x}$ is the $G$-cluster supported on $P_j$, of length $2$ at each point.
\item $\Phi(\mathcal O_{E_j}(-1))=\mathcal O_{P_j}\otimes\operatorname{sgn}$ and $\Phi(\mathcal O_{E_j}(-2))=\mathcal O_{P_j}[-1]$. Thus, for $x\in E_j,$ $\Phi$ takes the triangle $\mathcal O_{E_j}(-1)\to\mathcal O_x\to\mathcal O_{E_j}(-2)[1]$ to the following short exact sequence of equivariant sheaves
\[
0\to\mathcal O_{P_j}\otimes\operatorname{sgn}\to\mathcal O_{Z_x}\to\mathcal O_{P_j}\to0 .
\]
\end{enumerate}
\end{lemma}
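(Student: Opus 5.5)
Everything reduces to a local computation with $\Phi=\R q_*p^*$ and the description of $p$ near each orbit $P_j$. Away from the exceptional curves this is immediate. For (1) with $x\notin E_j$: $p$ is étale over a neighbourhood of $x$, so $p^*\OO_x=\bigoplus_{y\in p^{-1}(x)}\OO_y$ with its canonical (permutation) $G$-linearization. The points $y$ avoid all $e_p$, so $q$ is an isomorphism near them and $\R q_*$ returns $\bigoplus_{z\in\pi_Y^{-1}(\pi_X(x))}\OO_z$, which is the equivariant structure sheaf of the free orbit.

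For $x\in E_j$ I would work on one point $p\in P_j$ and then induce from $G_p=\Z/2$ to $G$. Since $G$ permutes the $|G|/2$ points of $P_j$ transitively, it suffices to compute $G_p$-equivariantly in the chart $\C^2$ with $\iota=-\mathrm{id}$ and $\widetilde{\C^2}=\mathrm{Tot}(\OO_{\P^1}(-1))$. Here $p^*\OO_x$ is the structure sheaf of the length-2 subscheme $p^{-1}(x)$, which is the preimage point $y\in e_p$ thickened in the normal direction; this follows from the local formula $u=x^2$ in the proof of the previous lemma. Its pushforward under $q$ is the length-2 subscheme of $\C^2$ at the origin with tangent direction $y\in\P(T_pY)$. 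As a $\Z/2$-representation its global sections are $\C\oplus\C\cdot\ell$, where $\ell$ is a linear form on which $\iota$ acts by $-1$, so they form the regular representation. Inducing up to $G$ gives a $G$-cluster $Z_x$ supported on $P_j$ of length 2 at each point. We also need $R^1q_*=0$, which holds because $p^*\OO_x$ is zero-dimensional.

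For (2) I would use the restriction $p|_{e_p}:e_p\to E_j$. It is an isomorphism $\P^1\to\P^1$, but it is not étale: $p^*E_j=2\sum e_p$ by \eqref{eq:pullbackE}. Since $p$ is flat, $p^*\OO_{E_j}(k)$ is a sheaf on the non-reduced curve $2e_p$. That curve has the filtration
\[
0\to\OO_{e_p}(-e_p)\otimes \OO(k)\to \OO_{2e_p}(k)\to\OO_{e_p}(k)\to0,
\]
with $\OO_{e_p}(-e_p)=\OO_{\P^1}(1)$. On this filtration $\iota$ acts trivially on the first graded piece of functions and by $\mathrm{sgn}$ on the conormal piece. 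The pullback of $\OO_{E_j}(-1)$ needs care, because $\OO_{E_j}(-1)$ pulls back under $e_p\cong E_j$ to degree $-1$ on $\P^1$, although $E_j\cdot E_j=-2$ while $e_p^2=-1$. Granting this, $p^*\OO_{E_j}(-1)$ has graded pieces $\OO_{\P^1}(-1)$ with trivial action and $\OO_{\P^1}(0)\otimes\mathrm{sgn}$. Applying $\R q_*$, which is $R\Gamma$ along $e_p$ into the skyscraper at $p$, kills the first piece and gives $\C\otimes\mathrm{sgn}$. So $\Phi(\OO_{E_j}(-1))=\OO_{P_j}\otimes\mathrm{sgn}$. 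For $\OO_{E_j}(-2)$ the pieces are $\OO_{\P^1}(-2)$, giving $H^1=\C$ trivial, and $\OO_{\P^1}(-1)\otimes\mathrm{sgn}$, which is acyclic. Hence $\Phi(\OO_{E_j}(-2))=\OO_{P_j}[-1]$.

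Finally, apply $\Phi$ to the triangle $\OO_{E_j}(-1)\to\OO_x\to\OO_{E_j}(-2)[1]$. This produces a triangle whose three terms are the sheaves $\OO_{P_j}\otimes\mathrm{sgn}$, $\OO_{Z_x}$ and $\OO_{P_j}$, so it is a short exact sequence. It is consistent with the regular-representation decomposition found in (1).

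\textbf{Main obstacle.} The delicate step is computing $p^*$ on sheaves supported on $E_j$, namely getting the correct degrees and $\iota$-characters on the two graded pieces of $\OO_{2e_p}$. I would check these against Euler characteristics and the already computed (1): the sum of the two results must recover the regular representation $\C\oplus\mathrm{sgn}$ at each point. That check pins down which piece carries the sign.
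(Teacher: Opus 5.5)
Your proposal is correct and follows essentially the paper's route: for (2) the paper uses the same filtration $0\to\OO_{e_p}(k+1)\otimes\operatorname{sgn}\to p^*\OO_{E_j}(k)|_{2e_p}\to\OO_{e_p}(k)\to 0$ and computes $\R q_*$ as $\R\Gamma$ along $e_p$, while for (1) it simply appeals to the universal family of $G$-clusters, where you instead give an explicit local computation. The step you flag as needing care is immediate: $p|_{e_p}\colon e_p\to E_j$ is an isomorphism, and the degree of $\OO_{E_j}(k)$ is intrinsic to $E_j\cong\P^1$, so the self-intersections $E_j^2=-2$ and $e_p^2=-1$ play no role there.
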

\begin{proof}
    \begin{enumerate}
        \item Follows from the definition of the universal family of clusters.
        \item Since $p^*E_j=2\sum_{p\in P_j}e_p,$ the sheaf $p^*(\OO_{E_j}(k))$ is supported on the non-reduced divisor $p^{-1}(E_j)=2\sum_{p\in P_j}e_p$. We have the standard short exact sequence,
        \[
        0\to\OO_{e_p}(-e_p)\to \OO_{2e_p}\to \OO_{e_p}\to 0.
        \]
        Since $e_p^2=-1,$ $\OO_{e_p}(-e_p)=\OO_{e_p}(1).$ Moreover $p$ restricted to $e_p$ gives an isomorphism $p|_{e_p}\colon e_p\xrightarrow{\sim}E_j$, which implies $p^*\OO_{E_j}(k)|_{e_p}\simeq\OO_{e_p}(k)$. Set $L_k=(p^*\OO_{E_j}(k))|_{2e_p},$ which is a line bundle on $2e_p,$ thus tensoring the above sequence by $L_k$ keeps exactness preserved. We get
        \[
        0\to \OO_{e_p}(k+1)\to L_k\to \OO_{e_p}(k)\to 0,
        \]
        since $L_k|_{e_p}\cong\OO_{e_p}(k).$ The stabilizer $\Z/2$ fixes $e_p$ pointwise and acts on its normal bundle by $-1.$ Thus it acts by the character $\sgn$ on $\OO_{e_p}(-e_p)$ and trivially on $\OO_{e_p}$. Consequently, the pieces carry the equivariant structures $\OO_{e_p}(k+1)\otimes\sgn$ and $\OO_{e_p}(k)\otimes 1.$ Now set $k=-1,$
        \[
        0\to\OO_{e_p}\otimes\sgn\to (p^*\OO_{E_j}(-1))|_{2e_p}\to\OO_{e_p}(-1)\to 0.
        \]
        Apply $\R q_*.$ Since $q$ contracts $e_p$ to a point $p,$ 
        \[
        \R q_*\OO_{e_p}(d)=\R \Gamma(\P^1,\OO_{\P^1}(d))\otimes\OO_p.
        \]
        Using this, we get $\R q_*(\OO_{e_p}(-1))=0$ and $\R q_* (p^*\OO_{E_j}(-1))|_{2e_p}= \OO_p\otimes\sgn.$ Since $p^{-1}(E_j)=\sqcup_{p\in P_j}2e_p$, the pullback sheaf decomposes into direct sums, $p^*\OO_{E_j}(-1)\cong\oplus_{p\in P_j}(p^*\OO_{E_j}(-1))|_{2e_p}.$ Doing the calculation for every disjoint component gives,
        \[
        \Phi(\OO_{E_j}(-1))=\R q_*p^*\OO_{E_j}(-1)=\bigoplus_{p\in P_j}\OO_p\otimes\sgn=\OO_{P_j}\otimes\sgn.
        \]
        Similarly, we get the other identity.
    \end{enumerate}
\end{proof}

Before moving to the next section, let us prove that we can apply the general theory to product-quotient surfaces.

\begin{lemma}\label{lem:BKR equivalence PQ}
    Let $X\to (C_1\times C_2)/G$ be a product-quotient surface. Then the canonical sheaf of $C_1\times C_2$ is locally trivial as a $G$-sheaf. In particular, 
    \[
    \Db(X)\cong\Dbg(C_1\times C_2).
    \]
\end{lemma}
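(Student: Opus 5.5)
To show that $\omega_S$, for $S=C_1\times C_2$, is locally trivial as a $G$-sheaf, I will use the criterion recalled at the start of this section: it suffices to check that $G_p\subset\SL(T_pS)$ for every $p\in S$. Once this holds, $S$ is a smooth quasi-projective surface with $\dim S=2\le 3$, so the dimension condition of \cite{BKR} is automatic. Hence $\GHilb(S)\to S/G$ is a crepant resolution and $\Db(\GHilb(S))\simeq\Dbg(S)$.

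Step one is the local computation. Fix $p=(x,y)$ with nontrivial stabilizer $G_p=G_x\cap G_y$. Since $G$ acts faithfully on the smooth curves $C_i$, each stabilizer $G_x\subset\Aut(C_1)$ is cyclic and acts faithfully on $T_xC_1$ by a character $\chi_1$; likewise $\chi_2$ for $G_y$ on $T_yC_2$. The diagonal action then gives $T_pS=T_xC_1\oplus T_yC_2$, on which $g\in G_p$ acts by $\operatorname{diag}(\chi_1(g),\chi_2(g))$ with determinant $\chi_1(g)\chi_2(g)$. Both characters are faithful on $G_p$, so no nontrivial $g$ fixes either factor pointwise. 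Consequently $G_p$ contains no pseudo-reflections.

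Step two is the main obstacle: upgrading ``no pseudo-reflections'' to $\chi_1\chi_2=1$. I do not think the general product-quotient setting gives this for free. In the paper's setting, $(C_1\times C_2)/G$ has only canonical (du Val) singularities, as stated above. For a cyclic group $\frac1m(1,a)$ without pseudo-reflections, the quotient singularity is canonical iff it is Gorenstein iff $a\equiv -1 \pmod m$, i.e.\ iff the group lies in $\SL_2$. This is the Reid–Tai criterion, or equivalently the statement that du Val singularities are exactly the quotients by finite subgroups of $\SL_2(\C)$ (Prill's theorem ensures the small group is determined by the singularity). I will therefore invoke the standing assumption, taken from \cite{BP12}, that the singularities are canonical, and conclude $G_p\subset\SL(T_pS)$. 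In the $A_1$ case of Theorem~\ref{thm:B} this is the already proved fact that $G_p=\{\pm\mathrm{id}\}$.

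Step three: with $G_p\subset\SL(T_pS)$ for all $p$, I apply the equivalence $\Phi=\R q_*p^*$ established above. Equivalently, I cite \cite{BKR} and identify $\GHilb(S)$ with the minimal resolution $X$, using that for $\dim 2$ the crepant resolution of a du Val quotient is the minimal one. This gives $\Db(X)\cong\Dbg(C_1\times C_2)$. In the free case the statement is trivial, since every $G_p$ is trivial.
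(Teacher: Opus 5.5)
Your argument is correct and is essentially the paper's: you exclude pseudo-reflections (via faithful tangent characters on each factor, where the paper uses finiteness of $\operatorname{Fix}(g)$), then go canonical $\Rightarrow$ Gorenstein $\Rightarrow$ $G_p\subset\SL(T_pS)$ for small groups, then apply Bridgeland--King--Reid with crepant $=$ minimal on surfaces; like the paper you must take canonicity of the singularities as an assumption, and you are right to flag it, since product-quotient surfaces with non-canonical points such as $\frac13(1,1)$ do occur. Drop two side remarks: the equivalence $\Phi=\R q_*p^*$ was only built in the $A_1$ setting, so for general canonical singularities only your Bridgeland--King--Reid option applies, and citing the lemma $G_p=\{\pm\mathrm{id}\}$ is circular because its proof starts from $G_p\subset\SL(T_pY)$.
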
    
\begin{proof}
    The group $G$ acts faithfully on each $C_i$ and diagonally on $C_1\times C_2,$ so $\operatorname{Fix}_{C_1\times C_2}(g)=\operatorname{Fix}_{C_1}(g)\times \operatorname{Fix}_{C_2}(g)$ for any $g\in G.$ Since $\operatorname{Fix}_{C_i}(g)$ is a closed subvariety on a curve, for any $g\neq 1$, it is finite. Thus $\operatorname{Fix}_{C_1\times C_2}(g)$ is finite, for every $g\neq 1$. Suppose $1\neq g\in G_p$ is a pseudo-reflection on $T_p(C_1\times C_2).$ One can choose local coordinates $(u,v)$ centered at $p$ such that $g(u,v)=(u,\lambda v)$ for $\lambda\neq 1.$ Hence $\operatorname{Fix}_{C_1\times C_2}(g)=\{v=0\},$ but this contradicts $\operatorname{Fix}_{C_1\times C_2}(g)$ is finite. This implies $G_p$ does not contain any pseudo-reflection.

    Recall $(C_1\times C_2)/G$ has only canonical singularities. On surfaces this implies Gorenstein. Now, for a small finite subgroup $G_p\subset\GL(T_p(C_1\times C_2))$ (i.e. no pseudo-reflection), $\C^2/G_p$ is Gorenstein if and only if $G_p\subset\SL(T_p(C_1\times C_2)).$ This implies we can apply \cite{BKR}, which says $\GHilb(C_1\times C_2)\to (C_1\times C_2)/G$ is a crepant resolution. On surfaces, crepant resolution is minimal, so by uniqueness, $\GHilb(C_1\times C_2)\cong X$, and again using \cite{BKR} we get the equivalence of derived categories in the statement. 
\end{proof}

\subsection{The Central Charge}\label{subsection: central-charge}
Let $\overline\omega,\overline B\in\NS(\overline Y)_{\mathbb R}$ with $\overline\omega$ ample, and set
\[
\omega=\pi_Y^*\overline\omega,\quad B=\pi_Y^*\overline B\ \text{on }Y,\qquad \omega_X=\pi_X^*\overline\omega,\quad B_X=\pi_X^*\overline B\ \text{on }X .
\]
Since $\pi_Y$ is finite, $\omega$ is ample and $G$-invariant. On the other hand $\omega_X$ is nef and big ($\omega_X\cdot E_j=0$). By commutativity, $q^*\omega=p^*\omega_X$, $q^*B=p^*B_X$. Take $B^\sharp=B_X+\frac14\sum_{j=1}^nE_j.$

\begin{lemma}\label{lem:GRR}
For every $F\in D^b(X)$,
\[
Z_{\omega,B}\bigl(\Phi(F)\bigr)=Z_{\omega,B}\bigl(Rq_*p^*F\bigr)=|G|\Bigl(Z_{\omega_X,B^\sharp}(F)-\frac n{16}\operatorname{rk}F\Bigr).
\]
\end{lemma}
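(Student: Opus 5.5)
The plan is to compute the Chern character of $\R q_*p^*F$ on $Y$ using Grothendieck--Riemann--Roch for the blow-up $q$, and then push everything down to $X$ using the finite flat map $p$. The equivariant structure plays no role, since $Z_{\omega,B}$ depends only on the underlying object $\Forg\circ\Phi(F)=\R q_*p^*F$.

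\textbf{Step 1: Todd classes of the blow-up.} Write $e=\sum_{p\in P}e_p$. We have $K_{\widetilde Y}=q^*K_Y+e$ and $\chi(\OO_{\widetilde Y})=\chi(\OO_Y)$. Also $q^*K_Y\cdot e=0$, because $q^*K_Y$ restricts trivially to each contracted curve $e_p$. Expanding both sides in degrees $0,1,2$ then gives
\[
\td_{\widetilde Y}=q^*\td_Y\cdot\bigl(1-\tfrac12 e\bigr).
\]
GRR together with the projection formula for $q_*$ yields
\[
\ch(\R q_*\mathcal G)\,\td_Y=q_*\bigl(\ch(\mathcal G)\,q^*\td_Y\,(1-\tfrac12 e)\bigr)=\td_Y\cdot q_*\bigl(\ch(\mathcal G)(1-\tfrac12 e)\bigr).
\]
Since $\td_Y$ is invertible, this gives $\ch(\R q_*\mathcal G)=q_*\bigl(\ch(\mathcal G)(1-\tfrac12 e)\bigr)$. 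I expect this to be the main point requiring care. The alternative is to check the identity directly on $\OO_{e_p}(d)$ and on pullbacks, but the Todd class comparison seems cleanest.

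\textbf{Step 2: Move to $\widetilde Y$ and then to $X$.} By the projection formula for $q$ and the relation $q^*(B+i\omega)=p^*(B_X+i\omega_X)$, we get
\[
Z_{\omega,B}(\R q_*p^*F)=-\int_{\widetilde Y} p^*\bigl(e^{-(B_X+i\omega_X)}\ch F\bigr)\bigl(1-\tfrac12 e\bigr).
\]
By \eqref{eq:pullbackE}, $e=\tfrac12 p^*\sum_j E_j$, so the last factor equals $p^*\bigl(1-\tfrac14\sum_jE_j\bigr)$. Since $p$ is finite flat of degree $|G|$, we have $\int_{\widetilde Y}p^*\alpha=|G|\int_X\alpha$. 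This gives
\[
Z_{\omega,B}(\R q_*p^*F)=-|G|\int_X e^{-(B_X+i\omega_X)}\ch(F)\bigl(1-\tfrac14\textstyle\sum_j E_j\bigr).
\]

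\textbf{Step 3: Identify the shifted $B$-field.} The $E_j$ are disjoint $(-2)$-curves, so $(\sum_jE_j)^2=-2n$. Hence
\[
e^{-\frac14\sum_jE_j}=1-\tfrac14\textstyle\sum_jE_j-\tfrac{n}{16}[\mathrm{pt}],
\]
that is, $1-\tfrac14\sum_jE_j=e^{-\frac14\sum_jE_j}+\tfrac n{16}[\mathrm{pt}]$. Substituting this, the first term produces $-|G|\int_X e^{-(B^\sharp+i\omega_X)}\ch F=|G|\,Z_{\omega_X,B^\sharp}(F)$. In the second term the point class picks out only the degree-zero part of $e^{-(B_X+i\omega_X)}\ch F$, which is $\rk F$. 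This contributes $-|G|\tfrac n{16}\rk F$, which completes the identity.
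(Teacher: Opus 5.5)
Your proof is correct and follows the paper's route: Grothendieck--Riemann--Roch for the blow-up $q$ with relative Todd class $1-\tfrac12\sum_{p\in P} e_p$ (plus a degree-two term that integrates to zero), then transfer to $X$ via $p$ and a shift of the $B$-field. The differences are cosmetic. You write the correction as $p^*\bigl(1-\tfrac14\sum_j E_j\bigr)$ and absorb the shift via $1-D=e^{-D}+\tfrac{n}{16}[\mathrm{pt}]$. The paper instead uses $p_*e_p=E_j$ and $|P_j|=|G|/2$, and expands $Z_{\omega_X,B^\sharp}$ directly.
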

\begin{proof}
    Recall $q:\widetilde{Y}\to Y$ is the blow-up map. Grothendieck-Riemann-Roch gives $\ch(\R q_*M)=q_*(\ch(M)\cdot\td_q),$ where $\td_q=\td(\widetilde{Y})\cdot q^*\td(Y)^{-1}.$ Write $\td_q=1+\td_q^{(1)}+\td_q^{(2)}$. Under the blow-up $K_{\widetilde{Y}}=q^*K_Y+\sum_{p\in P}e_p.$ Thus,
    \begin{equation}\label{td1}
    \td_q^{(1)}= \td_1(\widetilde{Y})-q^*\td_1(Y)=-\frac{1}{2}(K_{\widetilde{Y}}-q^*K_Y)=-\frac{1}{2}\sum_{p\in P}e_p.
    \end{equation}
    Now apply GRR to $\OO_{\widetilde{Y}}.$ Since $\R q_*\OO_{\widetilde{Y}}=\OO_Y,$ we get $q_*\td_q=1,$ and thus $q_*\td_q^{(1)}=0=q_*\td_q^{(2)}$. Since integration is compatible with proper-pushforward, we get,
    \[
    \int_{\widetilde{Y}}\td_q^{(2)}=\int_Y q_*\td_q^{(2)}=0.
    \]
    Set $\alpha=e^{-B_X-i\omega_X}\ch(F)=\alpha_0+\alpha_1+\alpha_2.$ Thus,
    \[
    \alpha_0=\rk F,\qquad \alpha_1=c_1(F)-\rk F(B_X+i\omega_X),\qquad \int_X\alpha=\int_X\alpha_2=-Z_{\omega_X,B_X}(F).
    \]
    Since the classes on $X$ and $Y$ we defined, came from $\overline{Y},$ we get $q^*(B+i\omega)=p^*(B_X+i\omega_X)$ by the commutativity of the diagram. For any $\omega,B\in\NS(Y)_\R,$
    \begin{align*}
        -Z_{\omega,B}(\R q_*p^*F)&=\int_{Y} e^{-B-i\omega}\ch(\R q_*p^*F)\\
        &=\int_{Y} e^{-B-i\omega} q_*(p^*\ch(F)\cdot\td_q)\\
        &=\int_{Y} q_*(q^*e^{-B-i\omega}p^*\ch(F)\td_q)\qquad\text{projection formula}\\
        &=\int_{\widetilde{Y}} q^*e^{-B-i\omega}p^*\ch(F)\td_q\qquad\text{proper-pushforward}.
    \end{align*}
    Here $q^*e^{-B-i\omega}=e^{q^*(-B-i\omega)}=e^{p^*(-B_X-i\omega_X)}=p^*e^{-B_X-i\omega_X}.$ This gives,
\begin{align*}
    q^*e^{-B-i\omega}p^*\ch(F)&=p^*e^{-B_X-i\omega_X}p^*\ch(F)\\
    &=p^*(e^{-B_X-i\omega_X}\ch(F))\\
    &=p^*\alpha.\\
    \implies -Z_{\omega,B}(\R q_*p^*F)&=\int_{\widetilde{Y}}p^*\alpha\td_q.
\end{align*}
Now only the top-degree part contributes to the integral, and for $p^*\alpha\td_q$ the top-degree part is $p^*\alpha_2+p^*\alpha_1\cdot\td_q^{(1)}+\alpha_0\td_q^{(2)}.$ Notice,
\[
\int_{\widetilde{Y}}\alpha_0\td_q^{(2)}=\rk F\int_{\widetilde{Y}}\td_q^{(2)}=0,\qquad \int_{\widetilde{Y}}p^*\alpha_2=|G|\int_X\alpha_2.
\]
For the middle part we use equation \ref{td1},
\begin{align*}
    \int_{\widetilde{Y}} p^*\alpha_1\cdot\td_q^{(1)}&= -\frac{1}{2}\sum_{p\in P}\int_{\widetilde{Y}}p^*\alpha_1\cdot e_p=-\frac{1}{2}\sum_{p\in P}\alpha_1\cdot p_*e_p.
\end{align*}
For $p\in P_j,$ $p_*e_p=E_j$ and $|P_j|=\frac{|G|}{2}.$ So,
\begin{align*}
    -\frac{1}{2}\sum_{p\in P}\alpha_1\cdot p_*e_p=-\frac{|G|}{4}\sum_j\alpha_1\cdot E_j&=-\frac{|G|}{4}\sum_j(c_1(F)-\rk F(B_X+i\omega_X))\cdot E_j\\
    &=-\frac{|G|}{4}\sum_j c_1(F)\cdot E_j.
\end{align*}
Collecting terms,
\begin{align*}
    -Z_{\omega,B}(\R q_*p^*F)&=|G|\left(\int_X\alpha_2-\frac{1}{4} \sum_j c_1(F)\cdot E_j\right)\\
    &=-|G|\left(Z_{\omega_X,B_X}(F)+\frac{1}{4} \sum_j c_1(F)\cdot E_j\right),
    \end{align*}
since $\int_X\alpha_2=-Z_{\omega_X,B_X}(F).$ Now changing the $B$-field from $B_X$ to $B^\sharp=B_X+\tfrac{1}{4}\sum_j E_j$ gives us,
\[
Z_{\omega_X,B^\sharp}(F)=Z_{\omega_X,B_X}(F)+\frac{1}{4}\sum_j c_1(F)\cdot E_j+\frac{n}{16}\rk F.
\]
The result follows.
\end{proof}

\begin{definition}
    Notice the stability condition $\sigma_{\omega,B}\in\Stab(Y)$ is $G$-invariant by the argument of Proposition \ref{prop:free}. Consider the induced stability condition $(\sigma_{\omega,B})_G$ on $\Dbg(Y).$ Under the equivalence $\Phi,$ define,
\begin{align}\label{main-definition-sigmaX}
    \sigma_X:=|G|^{-1}\cdot\Phi^*(\sigma_{\omega,B})_G\in\operatorname{Stab}(X).
\end{align}
This gives us,
\begin{align*}
    Z_X(F)&=Z_{\sigma_X}(F)=|G|^{-1}\cdot Z_{(\sigma_{\omega,B})_G}(\Phi(F))\\
    &=|G|^{-1}\cdot Z_{\sigma_{\omega,B}}(\Forg\,\circ\,\Phi(F))\\
    &=|G|^{-1}\cdot Z_{\sigma_{\omega,B}}(\R q_*p^*F)\\
    &=Z_{\omega_X,B^{\sharp}}-\frac{n}{16}\rk F.
\end{align*}
Similarly, the slicing $\mathcal P_X(\phi)=\{F: \R q_*p^*F\in\mathcal P_{\omega,B}(\phi)\}$. By Corollary \ref{cor:full-support}, we see that $\sigma_X$ satisfies support property with respect to $\Knum(X);$ thus $\sigma_X\in\Stab(X).$
\end{definition} 

\begin{theorem}\label{maintheorem}
    Every skyscraper sheaf $\mathcal O_x$ is $\sigma_X$-semistable of phase $1$. If $x\notin\bigcup_j E_j$, then $\OO_x$ is $\sigma_X$-stable. If $x\in E_j$, it is strictly semistable with Jordan--H\"older filtration
\[
0\longrightarrow\OO_{E_j}(-1)\longrightarrow\OO_x\longrightarrow\OO_{E_j}(-2)[1]\longrightarrow 0.
\]
Hence, $\sigma_X$ is not geometric. 
\end{theorem}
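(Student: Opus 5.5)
The plan is to transport every question about $\sigma_X$ through $\Phi$ and $\Forg$ to the cover $Y$. There the stability condition is the geometric tilt condition $\sigma_{\omega,B}$, whose skyscraper sheaves are stable of phase $1$. By definition, $F\in\PP_X(\phi)$ if and only if $\R q_*p^*F=\Forg\Phi(F)\in\PP_{\omega,B}(\phi)$. The rescaling by $|G|^{-1}$ is a positive real scalar, so it does not change phases.

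\textbf{Semistability of $\OO_x$ of phase $1$.} By Lemma \ref{lem:action-of-phi}(1), $\Forg\Phi(\OO_x)$ is a zero-dimensional sheaf on $Y$: either the structure sheaf of a free orbit, or the cluster $\OO_{Z_x}$, which has length $2$ at each point of $P_j$. Any zero-dimensional sheaf is an iterated extension of skyscrapers $\OO_y$. These lie in $\PP_{\omega,B}(1)$, which is extension closed, so $\Forg\Phi(\OO_x)\in\PP_{\omega,B}(1)$ and hence $\OO_x\in\PP_X(1)$.

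\textbf{Stability off the exceptional curves.} For $x\notin\bigcup_j E_j$, the image $\Phi(\OO_x)$ is the equivariant structure sheaf of a free $G$-orbit. I would repeat the argument of Proposition \ref{prop:free} verbatim. Equivariant subobjects in the abelian category $\PP_G(1)$ forget to subobjects of $\bigoplus_{y}\OO_y$ in $\PP(1)$, and these are $\bigoplus_{y\in T}\OO_y$ for some subset $T$ of the orbit. $G$-invariance and transitivity force $T=\emptyset$ or $T$ the whole orbit. So $\Phi(\OO_x)$ is simple in $\PP_G(1)$, and since $\Phi$ is an equivalence, $\OO_x$ is $\sigma_X$-stable.

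\textbf{Points on $E_j$.} For $x\in E_j$, Lemma \ref{lem:action-of-phi}(2) sends the triangle $\OO_{E_j}(-1)\to\OO_x\to\OO_{E_j}(-2)[1]$ to the short exact sequence $0\to\OO_{P_j}\otimes\sgn\to\OO_{Z_x}\to\OO_{P_j}\to0$. Both outer terms forget to $\bigoplus_{p\in P_j}\OO_p\in\PP(1)$, so $\OO_{E_j}(-1)$ and $\OO_{E_j}(-2)[1]$ lie in $\PP_X(1)$, and the triangle is a short exact sequence in the abelian category $\PP_X(1)$. Hence $\OO_x$ is strictly semistable. To see that this is a Jordan--H\"older filtration, I would show that $\OO_{P_j}$ and $\OO_{P_j}\otimes\sgn$ are simple in $\PP_G(1)$. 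A subobject forgets to $\bigoplus_{p\in T}\OO_p$ with $T\subseteq P_j$ $G$-invariant, and $G$ acts transitively on $P_j$, so $T=\emptyset$ or $T=P_j$. Here I must also check that a subobject cannot be a proper sub-linearization with the same support. This holds because each $\OO_p$ is one-dimensional, so the equivariant structure on $\bigoplus_{p\in P_j}\OO_p$ is induced from the character of the stabilizer $\Z/2$ at one point.

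\textbf{Non-geometricity, and the main obstacle.} $\sigma_X$ is not geometric because some skyscraper sheaves are not stable. The step I expect to require the most care is the identification of subobjects in $\PP_G(1)$ with $G$-invariant subobjects of the forgotten object. I would handle it as in Proposition \ref{prop:free}: $\Forg$ is faithful and exact on the slicing, so a monomorphism in $\PP_G(1)$ forgets to a monomorphism in $\PP(1)$. The image is then $G$-stable because the inclusion is a $G$-equivariant morphism.
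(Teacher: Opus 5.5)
Your proposal is correct and follows essentially the paper's proof. Like the paper, you transport everything through $\Forg\circ\Phi$, reuse the transitivity argument of Proposition \ref{prop:free} to show simplicity of $\Phi(\OO_x)$, $\OO_{P_j}$ and $\OO_{P_j}\otimes\sgn$, and read off the Jordan--H\"older filtration from Lemma \ref{lem:action-of-phi}(2).

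Two small remarks. The paper's extra computation $Z_X(\OO_{E_j}(-1))=Z_X(\OO_{E_j}(-2)[1])=-\tfrac12$ is redundant given your membership-in-$\PP_X(1)$ argument. Your ``sub-linearization'' worry is settled more directly by noting that an equivariant morphism whose underlying morphism is an isomorphism is itself an isomorphism.
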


\begin{proof}
    If $x\notin\cup_j E_j,$ then 
    \[
    \Phi(\OO_x)= \bigoplus_{p\in\pi_Y^{-1}(\pi_X(x))}\OO_p\in\PP_{\omega,B,G}(1),
    \]
    since $\sigma_{\omega,B}$ is geometric. As in the proof of Proposition \ref{prop:free}, one shows that $\Phi(\OO_x)$ is a simple object in $\PP_{\omega,B,G}(1)$ and hence $(\sigma_{\omega,B})_G$-stable. Under the equivalence $\OO_x$ is $\Phi^*(\sigma_{\omega,B})_G$-stable. Since $\C$-action doesn't change stability, $\OO_x$ is $\sigma_X$-stable.

    Now, suppose $x\in E_j$ for some $j.$ By Lemma \ref{lem:action-of-phi}, we know that $\Phi$ takes the triangle $\OO_{E_j}(-1)\to\OO_x\to\OO_{E_j}(-2)[1]$ to the short exact sequence of $G$-equivariant sheaves
    \[
    0\to\OO_{P_j}\otimes\sgn\to \OO_{Z_x}\to\OO_{P_j}\to 0,
    \]
    hence, applying $\Forg$,
    \[
    0\to\OO_{P_j}\to\OO_{Z_x} \to\OO_{P_j}\to 0.
    \]
    Notice, all of these are $0$-dimensional sheaves on $Y$, and thus lie in $\PP_{\omega,B}(1).$ Thus we have a triangle in $\PP_X(1),$ $\OO_{E_j}(-1)\to\OO_x\to\OO_{E_j}(-2)[1].$ In other words, we have the short exact sequence,
    \[
    0\to\OO_{E_j}(-1)\to\OO_x\to\OO_{E_j}(-2)[1]\to 0. 
    \]
    Since $\Forg\,\circ\,\Phi(\OO_{E_j}(-1))=\OO_{P_j}=\bigoplus_{p\in P_j}\OO_p=\Forg\,\circ\,\Phi(\OO_{E_j}(-2)[1]),$ as before, we conclude that both the subobject and quotient in the above short exact sequence are $\sigma_X$-stable. Finally, $\ch(\OO_{E_j}(-1))=(0,E_j,0)$ and $\ch(\OO_{E_j}(-2))=(0,E_j,-1)$ and, since $\omega_X\cdot E_j=0$ and $B^\sharp\cdot E_j=-\tfrac{1}{2}$, we get
\[
Z_X(\OO_{E_j}(-1))=-\frac{1}{2},\qquad Z_X(\OO_{E_j}(-2)[1])=-\bigl(1+B^\sharp\cdot E_j\bigr)=-\frac{1}{2}.
\]
Therefore the short exact sequence is the Jordan--H\"older filtration of $\OO_x:$ its two factors are stable, non-isomorphic, and have the same phase. Hence $\OO_x$ is strictly $\sigma_X$-semistable and $\sigma_X$ is not geometric.
\end{proof}

\subsection*{Mass-Hom bound, deformation, path of stability conditions}
The goal of this section is to prove the following theorem. 

\begin{theorem}\label{thm:maintheorem2}
    Suppose we are in the set up of Theorem \ref{maintheorem}. Then,
\begin{enumerate}
    \item $\sigma_X$ lies in the closure of the geometric chamber $\Stabgeo(X)$.
\item $\sigma_X$ lies in the closure of the locus
\[
\Ubg=\{\,\sigma_{\omega',B'}\cdot g : \omega'\text{ ample},\ B'\in\mathrm{NS}(X)_{\mathbb R},\ g\in\widetilde{\mathrm{GL}}^+(2,\mathbb R)\,\}
\]
if and only if $\omega_X^2\geq n/8.$ In particular, $\mathcal{U}_{\mathrm{BG}} \subsetneq \Stabgeo(X)$ whenever $\omega_X^2<n/8$ for some choice of $\omega.$
\end{enumerate}
\end{theorem}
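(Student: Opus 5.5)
The plan is to handle both parts through central charges, using Bridgeland's local homeomorphism $\ZZ\colon\Stab(X)\to\Hom(\Knum(X),\C)$. The starting point is the formula from \S\ref{subsection: central-charge},
\[
Z_X=Z_{\omega_X,B^\sharp}-\tfrac{n}{16}\rk .
\]
The one delicate feature is that $\omega_X$ is only nef, with $\omega_X\cdot E_j=0$.

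For (1), I will deform $\omega_X$ to the ample classes $\omega_t=\omega_X-t\sum_jE_j$ for small $t>0$; these are ample because $\omega_t\cdot E_j=2t>0$ and $\omega_X$ is nef and big. Put
\[
Z_t=Z_{\omega_t,B^\sharp}-\tfrac{n}{16}\rk .
\]
For $t$ small, $Z_t$ lies in the neighbourhood of $Z_X$ over which $\ZZ$ is a homeomorphism. This uses the full support property of $\sigma_X$ (Corollary \ref{cor:full-support}), and the mass--Hom bound gives an explicit size for that neighbourhood. We therefore get a path $\sigma_t\to\sigma_X$.

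Next I check that $\sigma_t$ is geometric for small $t>0$. Points $x$ off $\bigcup E_j$ are $\sigma_X$-stable, and stability is an open condition; since the skyscrapers form a proper family, I expect a uniform neighbourhood to work for all of them. For $x\in E_j$, the only destabilizing sequence at $\sigma_X$ is the Jordan--H\"older filtration of Theorem \ref{maintheorem}, and by local finiteness of walls near $\sigma_X$ it remains the only candidate nearby. For the subobject we get $\Im Z_t(\OO_{E_j}(-1))=\omega_t\cdot E_j=2t>0$ while $Z_t(\OO_x)=-1$. So $\OO_{E_j}(-1)$ has phase $<1$, it no longer destabilizes, and $\OO_x$ becomes stable of phase $1$. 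Hence $\sigma_t\in\Stabgeo(X)$ and $\sigma_X\in\overline{\Stabgeo(X)}$. I expect this uniformity step to be the main obstacle: I need that no new destabilizers of skyscrapers appear uniformly along the path. I would try to obtain it from the support property together with the fact that $\{\ch(\OO_x)\}$ is a single class.

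For (2), I normalize by the $\C$-action so that $Z(\OO_x)=-1$. An element of $\Ubg$ normalized this way has central charge $g\circ Z_{\omega',B'}$, where $g$ fixes $-1$, i.e. $g(x+iy)=x+by+iay$ with $a>0$. Matching the $\ch_1$-coefficients of the imaginary parts against $Z_X$ (note $\omega_X\cdot B^\sharp=\omega_X\cdot B_X$) forces $a\omega'=\omega_X$. Matching the real parts then forces $B'+b\omega'=B^\sharp$. Substituting into the $\rk$-coefficient, a short computation turns the remaining equation into
\[
\frac{1+b^2}{a^2}\,\omega_X^2=\omega_X^2-\frac n8 .
\]
For the "only if" direction, suppose $\sigma_X=\lim\sigma_k$ with $\sigma_k\in\Ubg$. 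The normalized central charges then converge to $Z_X$. Passing to the limit in the identity above, whose left side is $\ge0$, gives $\omega_X^2\ge n/8$.

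For the "if" direction, take $\omega_X^2>n/8$ first. Run the path of (1) but with $Z_t$ written in the form $g\circ Z_{\omega'_t,B'_t}$, where $\omega'_t=\omega_t/a$ and $a$ solves the identity; when $\omega_X^2=n/8$, let $a\to\infty$ along the path. Each $\sigma_t$ is geometric with this central charge, so $\sigma_t\cdot g^{-1}$ is geometric with central charge $Z_{\omega'_t,B'_t}$. By Proposition \ref{geometric-stab} it equals $\sigma_{\omega'_t,B'_t}$, so $\sigma_t\in\Ubg$ and $\sigma_X\in\overline{\Ubg}$. Finally, when $\omega_X^2<n/8$, the geometric $\sigma_t$ from (1) lie near $\sigma_X$ but outside $\overline{\Ubg}$ for small $t$. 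This gives $\Ubg\subsetneq\Stabgeo(X)$.
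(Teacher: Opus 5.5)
Your arguments for (1), for the ``only if'' direction, and for the ``if'' direction when $\omega_X^2>n/8$ are sound and essentially follow the paper. The paper packages your coefficient matching into a $\widetilde{\mathrm{GL}}^+(2,\R)$-semi-invariant $Q$. In your version, the limiting identity should be read as $(1+b_k^2)\omega_k'^2\to\omega_X^2-\frac n8$, which is all you need.

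The genuine gap is the boundary case $\omega_X^2=n/8$ of the ``if'' direction, which the statement includes. Along your path, $\omega_t=\omega_X-t\sum_jE_j$ satisfies $\omega_t^2=\omega_X^2-2nt^2<\omega_X^2$. So, in $B^\sharp$-twisted Chern character coordinates, the $\rk$-coefficient of $\Re Z_t$ is $\frac12(\omega_t^2-\frac n8)=\frac12(\omega_X^2-\frac n8)-nt^2$. In the equality case this is $-nt^2<0$. Then $\frac{1+b^2}{a^2}\omega_t^2=\omega_t^2-\frac n8$ has no solution with $a>0$. Sending $a\to\infty$ does not help, because the left side stays strictly positive.

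Worse, run your own ``only if'' computation at $\sigma_t$ instead of $\sigma_X$. It shows $\sigma_t\notin\overline{\Ubg}$ for every $t>0$; equivalently, the paper's invariant gives $Q(Z_t)=(\omega_t^2-\frac n8)\omega_t^2<0$. Your deformation shrinks $\omega^2$, so it pushes the path out of $\overline{\Ubg}$ precisely when there is no slack. Hence this path cannot witness $\sigma_X\in\overline{\Ubg}$.

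The missing idea is to raise the real $\rk$-coefficient as well. This is why the paper's path is $W_t=Z_X+it\,H\cdot\ch_1^{B^\sharp}+t\rk$ and not just an imaginary-part deformation. In your setting, take $Z'_t=Z_t+t\rk$.
\begin{itemize}
\item This changes neither $Z'_t(\OO_x)=-1$ nor $\Im Z'_t(\OO_{E_j}(-1))=\omega_t\cdot E_j=2t$, so your geometricity argument from (1) applies verbatim.
\item The $\rk$-coefficient becomes $\frac12(\omega_X^2-\frac n8)+t-nt^2$, which is positive for $0<t<1/n$ whenever $\omega_X^2\ge n/8$.
\item Hence $Z'_t=g_t\circ Z_{\omega_t/a_t,B^\sharp}$ with $g_t(x+iy)=x+ia_ty$ and $a_t^2=\omega_t^2/(\omega_t^2-\frac n8+2t)$.
\item Proposition~\ref{geometric-stab} then places the corresponding geometric stability conditions in $\Ubg$, covering both $\omega_X^2>n/8$ and $\omega_X^2=n/8$ at once.
\end{itemize}

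Two smaller points.
\begin{itemize}
\item Ampleness of $\omega_t$ for small $t$ does not follow from $\omega_t\cdot E_j>0$ and nefness of $\omega_X$ alone. Use instead that $-\sum_jE_j$ is $\pi_X$-ample.
\item The uniformity you flag in (1) is obtained in the paper exactly along the lines you suggest. It uses the finite set of classes of $\tau_t$-mass at most $2$ and preservation of the sign of imaginary parts on that set. Properness of the family of skyscrapers plays no role; what matters is that all $\OO_x$ share a single class.
\end{itemize}
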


If $(Z,\PP)$ is a pre-stability condition and $E\in\PP(\phi),$ then $Z(E)=m(E)e^{i\pi\phi}$ with $m(E)\in\R_{>0}.$ This is called the \emph{mass} of the semistable object $E.$ For an object $E\in\Db(X)$ one defines the mass of $E,$ to be $m(E)=\sum_im(E_i),$ where the sum is over the semistable factors of $E$ from the HN filtration of $E.$
\begin{definition}
    \cite[Def. 2.7]{HL-Robotis} We say that a numerical pre-stability condition $\sigma=(Z,\PP)$ admits a \emph{mass-Hom bound} if, for every $A\in\Db(X)$, there exists a constant $C_A\geq 0$ such that
    \[
    \dim_k\Hom(A,F)\leq C_Am_{\sigma}(F)
    \]
    for every $F\in\Db(X)$, where $m_{\sigma}(F)$ denoted the mass of $F$ with respect to $\sigma.$
\end{definition}

\begin{theorem}\label{Cheng}
    \cite[Theorem 2.1]{Cheng:26} Let $X$ be a projective scheme over $\C$, and let $\sigma=(Z,\PP)$ be a numerical
pre-stability condition on $\Db(X)$. Suppose that $\sigma$ admits a mass-Hom bound. Then $\sigma$ satisfies
the support property with respect to the full numerical Grothendieck group $\Knum(X)$.
\end{theorem}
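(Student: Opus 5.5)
The plan is to prove the equivalent ``norm'' form of the support property. It suffices to find a norm $\|\cdot\|$ on $\Knum(X)\otimes\R$ and a constant $C>0$ with
\[
\|v(E)\|\le C\,|Z(E)|\qquad\text{for every $\sigma$-semistable }E.
\]
Given such a bound, the quadratic form $Q(v)=C^2|Z(v)|^2-\|v\|^2$ is negative definite on $\ker Z\otimes\R$ and satisfies $Q(v(E))\ge 0$ on semistable objects, which is exactly the support property with respect to $\Knum(X)$. Since $\sigma$ is numerical, $Z$ already factors through $\Knum(X)$. For a semistable $F$ we have $m_\sigma(F)=|Z(F)|$, so the task reduces to bounding the coordinates of $v(F)$ by the mass.

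\textbf{Step 1 (coordinates via $\chi$).} Since $\Knum(X)=K_0(X)/T$ with $T$ the kernel of the Euler pairing, the pairing is nondegenerate on $\Knum(X)\otimes\R$. I would choose finitely many perfect objects $A_1,\dots,A_r$ (for instance suitable twists $\OO_X(-m)$ and their shifts or duals) such that the functionals $v\mapsto\chi(A_i,v)$ span the dual space. Then $\|v\|:=\sum_i|\chi(A_i,v)|$ is a norm. For each $i$,
\[
|\chi(A_i,F)|\le\sum_k\dim\Hom(A_i,F[k]) .
\]

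\textbf{Step 2 (apply the mass-Hom bound).} Shifting does not change mass: $m_\sigma(F[k])=m_\sigma(F)$. Hence each term satisfies
\[
\dim\Hom(A_i,F[k])\le C_{A_i}\,m_\sigma(F)=C_{A_i}|Z(F)| .
\]
This yields the desired inequality, provided the sum over $k$ only involves a number of indices bounded independently of $F$.

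\textbf{Step 3 (main obstacle: uniform amplitude).} The hard part is showing there is an $N$ such that $\Hom(A_i,F[k])=0$ for $|k|>N$, for all semistable $F$ after normalizing the phase to $(0,1]$. The mass-Hom bound only controls one shift at a time. My first attempt would use the bounded $t$-structure $\PP(0,1]$: show it has finite amplitude relative to $\Coh(X)$, so that $\PP(0,1]\subset D^{[-N,N]}_{\Coh}$. I would try to extract this from the mass-Hom bound itself, applied to $A=\OO_X(-m)[j]$ with $m\gg0$, because $\Hom(\OO(-m),G[j])=H^j(G(m))$ detects the cohomology sheaves of $G$. Combined with Serre vanishing, and the finite $\mathrm{Ext}$-amplitude of perfect objects against $D^{[a,b]}$, this should give the uniform window in $k$.

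If that fails, the fallback is to replace the sum over all $k$ by a filtration argument. Using the HN filtration of the shifts, I would bound $\chi(A_i,F)$ by the finitely many $\Hom$ groups whose phase window is forced by the phases of the HN factors of $A_i$.
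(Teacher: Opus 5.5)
The paper gives no proof of this statement; it is imported from \cite{Cheng:26}, so I judge your proposal on its own. There is a genuine gap: Step 3, the uniform window in $k$, is where the content of the theorem lies, and neither of your routes closes it. Steps 1--2 are the right reduction, with two small repairs. First, for $Q=C^2|Z|^2-\|\cdot\|^2$ to be a quadratic form you need a Euclidean norm; take one dominated by a multiple of your $\ell^1$-norm. Second, twists of a single $\OO_X(-m)$, with their shifts and duals, only produce Hilbert-polynomial functionals, at most $\dim X+1$ of them. These do not span $(\Knum(X)\otimes\R)^\vee$ in general, e.g.\ on a surface with $\rho>1$. Nondegeneracy of $\chi$ does, however, give some finite family of perfect $A_i$.

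Your first route to Step 3 cannot work. The mass-Hom bound is an inequality on dimensions and can never force $\Hom(\OO_X(-m),G[j])=0$. Moreover, the $m$ needed for Serre vanishing depends on $G$, so no window uniform in $G$ comes out. Finite amplitude of $\PP((0,1])$ relative to $\Coh(X)$ is a statement about the heart alone that the mass-Hom bound cannot see, and you do not need it.

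Your fallback gives only half of the window. For $F\in\PP(\phi)$, semi-orthogonality against the HN factors of $A$ gives $\Hom(A,F[k])=0$ whenever $\phi+k<\phi^-(A)$. But the axioms of a slicing say nothing about $\Hom(A,F[k])$ for $k\gg0$, since positive Exts between objects of a heart are uncontrolled. The missing idea is duality. For $A$ perfect, Grothendieck--Serre duality on the projective scheme $X$ gives
\[
\Hom(A,F[k])^\vee\cong\Hom\bigl(F[k],S(A)\bigr),\qquad S(A)=A\otimes\omega_X^\bullet\quad(=A\otimes\omega_X[\dim X]\ \text{if $X$ is smooth}).
\]
Here $S(A)$ is a \emph{fixed} object of $\Db(X)$ with finitely many HN factors, so the right-hand side vanishes once $\phi+k>\phi^+(S(A))$. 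Hence only integers $k$ with $\phi^-(A)\le\phi+k\le\phi^+(S(A))$ contribute. There are at most $N_A$ of them, with $N_A$ depending only on $A$ and not on $F$. This gives
\[
|\chi(A_i,F)|\le\sum_k\dim\Hom(A_i,F[k])\le N_{A_i}\,C_{A_i}\,|Z(F)|
\]
for every $\sigma$-semistable $F$, which is exactly the uniform bound your Step 2 needed.
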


\begin{lemma}\label{lem:sigmaX-mass-Hom}
    $\sigma_X$ as in \ref{main-definition-sigmaX} admits a mass-Hom bound. 
\end{lemma}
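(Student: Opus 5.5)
The plan is to transport a mass-Hom bound from $\sigma_{\omega,B}$ on $Y$ to $\sigma_X$ through $\Phi$ and the forgetful functor. There are three steps. First, establish that the tilt stability condition $\sigma_{\omega,B}$ on $\Db(Y)$ admits a mass-Hom bound. Second, check that the induced condition $(\sigma_{\omega,B})_G$ on $\Dbg(Y)$ inherits one. Third, pull back along $\Phi$ and rescale by $|G|^{-1}$.

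For the first step I would invoke the known result that geometric (tilt) stability conditions satisfying the support property admit mass-Hom bounds, as in \cite{HL-Robotis}. Concretely, for fixed $A$ one bounds $\dim\Hom(A,F)$ for $F$ semistable by a constant times $|Z_{\omega,B}(F)|$. This is done by reducing $A$ to finitely many HN/cohomology pieces and using that $\hom(A,F)$ is bounded by $|\chi|$-type expressions plus vanishing from phase considerations, together with the support-property inequality $|Z(F)|\geq c\,\|v(F)\|$. I expect this to be the main obstacle if one cannot cite it directly. The fallback is to bound $\hom(A,F)\le \sum_i\hom(A,\mathcal{H}^i)$ via cohomology sheaves of $F$ with respect to $\AA_{\omega,B}$, then use Castelnuovo–Mumford-type bounds on $h^0$ of torsion-free sheaves in terms of $\ch$, controlled by $|Z|$ via the support property.

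For the second step, take $A,F\in\Dbg(Y)$. Since $\Hom_{\Dbg}(A,F)=\Hom_{\Db(Y)}(\Forg A,\Forg F)^G\subseteq \Hom(\Forg A,\Forg F)$, we get $\dim\Hom_G(A,F)\le C_{\Forg A}\, m_{\sigma_{\omega,B}}(\Forg F)$. Next, the HN filtration of $F$ with respect to $(\sigma_{\omega,B})_G$ maps under $\Forg$ to the HN filtration of $\Forg F$; this follows from uniqueness of HN filtrations and $G$-invariance, as in \cite{MMS}. Since the central charges agree, $m_{(\sigma_{\omega,B})_G}(F)=m_{\sigma_{\omega,B}}(\Forg F)$. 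Hence $(\sigma_{\omega,B})_G$ admits a mass-Hom bound with $C_A:=C_{\Forg A}$.

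For the third step, $\Phi$ is an equivalence, and by definition \eqref{main-definition-sigmaX} it identifies the slicing of $\sigma_X$ with that of $(\sigma_{\omega,B})_G$, while the central charges differ by the factor $|G|^{-1}$. Therefore $m_{\sigma_X}(F)=|G|^{-1}m_{(\sigma_{\omega,B})_G}(\Phi F)$ and $\Hom_X(A,F)\cong\Hom_G(\Phi A,\Phi F)$. It follows that $\dim\Hom(A,F)\le |G|\,C_{\Forg\Phi A}\,m_{\sigma_X}(F)$. Note that the argument only needs $\sigma_X$ to be a pre-stability condition, so it does not circularly use the support property; Theorem \ref{Cheng} can then be applied afterwards.
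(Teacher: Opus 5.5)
Your proposal is correct and matches the paper's proof. Both arguments bound $\dim\Hom_X(M,N)=\dim\Hom_Y(\Forg\Phi M,\Forg\Phi N)^G\le\dim\Hom_Y(\Forg\Phi M,\Forg\Phi N)$, use that $\Forg\circ\Phi$ carries $\sigma_X$-HN filtrations to $\sigma_{\omega,B}$-HN filtrations so that $m_{\sigma_{\omega,B}}(\Forg\Phi N)=|G|\,m_{\sigma_X}(N)$, and reach the same constant $|G|\,C_{\Forg\Phi M}$; the paper just does your Steps 2 and 3 in one pass, and it takes the input that geometric stability conditions on a smooth projective surface admit a mass-Hom bound from \cite[Prop.~7.6]{LLLMPSZ:26} (citing \cite{HL-Robotis} only for the definition), so your fallback sketch is not needed.
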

\begin{proof}
Recall $\sigma_X=|G|^{-1}\cdot \Phi^*(\sigma_{\omega,B})_G$. By \cite[Prop. 7.6]{LLLMPSZ:26}, we know that every geometric stability condition on a smooth projective surface admits a mass-Hom bound. This implies $\sigma_{\omega,B}\in\Stab(Y)$ (see \S  \ref{subsection: central-charge}) admits a mass-Hom bound. Since $\Phi$ is an equivalence,
\begin{align*}
    \dim\Hom_X(M,N)&=\dim\Hom_{\Dbg (Y)}(\Phi(M),\Phi(N))\\
    &=\dim\Hom_{Y}(\Forg\,\circ\,\Phi(M),\Forg\,\circ\,\Phi(N))^G\\
    &=\dim\Hom_{Y}(\R q_*p^*(M),\R q_*p^*(N))^G\\
    &\leq \dim\Hom_{Y}(\R q_*p^*(M),\R q_*p^*(N)).
\end{align*}
Let $\R q_*p^*(M)=M_Y,\,\R q_*p^*(N)=N_Y.$ Since $\sigma_{\omega,B}$ admits a mass-Hom bound, there exists a constant $C_{M_Y}\geq 0$, such that
\[
\dim\Hom_X(M,N)\leq C_{M_Y}m_{\sigma_{\omega,B}}(N_Y)
\]
Let $0\neq F\in\Db(X)$ with $\sigma_X$-Harder--Narasimhan filtration $0=F_0\to\dots \to F_m=F$ and semistable factors $A_i\in\PP_X(\phi_i),$ $\phi_1>\dots>\phi_m.$ By definition of the slicing $\PP_X$ we have $\Forg\,\circ\,\Phi(A_i)\in\PP_{\omega,B}(\phi_i),$ hence $0=F_{0,Y}\to\dots \to F_{m,Y}=F_{Y}$ is the Harder-Narasimhan filtration of $F_Y,$ where $F_{i,Y}=\R q_*p^*(F_i).$ Since $Z_X=|G|^{-1}Z_{\omega,B}\circ\R q_*p^*,$ 
\[
m_{\sigma_X}(F)=\sum_i|Z_X(A_i)|=|G|^{-1}\sum_i|Z_{\omega,B}(A_{i,Y})|=|G|^{-1}m_{\sigma_{\omega,B}}(F_Y).
\]
Thus,
\[
m_{\sigma_{\omega,B}}(N_Y)=|G|\,m_{\sigma_X}(N).
\]
Therefore,
\[
\dim \Hom_X(M,N)\leq |G|\,C_{M_Y}\,m_{\sigma_X}(N).
\]
Hence, $\sigma_X$ admits a mass-Hom bound.
\end{proof}

\begin{corollary}\label{cor:full-support}
    $\sigma_X$ satisfies the support property with respect to the full numerical Grothendieck group $\Knum(X)$.
\end{corollary}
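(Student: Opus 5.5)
This follows directly from Theorem \ref{Cheng} combined with Lemma \ref{lem:sigmaX-mass-Hom}. We only need to check that $\sigma_X$ meets the hypotheses of Theorem \ref{Cheng}: it must be a \emph{numerical pre-stability condition} on $\Db(X)$, where $X$ is projective over $\C$.

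First, I will show that $\sigma_X=(Z_X,\PP_X)$ is a pre-stability condition.

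\textbf{Slicing axioms.} These transfer from $\sigma_{\omega,B}$ through the construction of \cite{MMS} for $(\sigma_{\omega,B})_G$ on $\Dbg(Y)$. Pulling back along the exact equivalence $\Phi$ preserves shifts, Hom-vanishing and HN filtrations. The rescaling by $|G|^{-1}\in\C$ only alters the central charge.

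\textbf{Compatibility.} For $F\in\PP_X(\phi)$ we have $\Forg\circ\Phi(F)\in\PP_{\omega,B}(\phi)$. Hence
\[
Z_X(F)=|G|^{-1}Z_{\omega,B}(\R q_*p^*F)\in\R_{>0}e^{i\pi\phi}.
\]
The one input taken on faith is that \cite{MMS} applies at the pre-stability level to $G$-invariant stability conditions. That is exactly the content of the Definition preceding the statement.

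Second, numericality. Lemma \ref{lem:GRR} gives
\[
Z_X(F)=Z_{\omega_X,B^\sharp}(F)-\tfrac{n}{16}\rk F,
\]
which depends only on $\ch(F)$. The Chern character factors through $\Knum(X)$ on a smooth projective surface, because by Riemann--Roch the Euler pairing is nondegenerate on $\ch$ modulo numerical equivalence. So $Z_X$ factors via $\Knum(X)$.

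Third, combine: Lemma \ref{lem:sigmaX-mass-Hom} provides the mass-Hom bound, and Theorem \ref{Cheng} then yields the support property with respect to the full $\Knum(X)$.

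There is no genuine obstacle. The only point requiring care is that Lemma \ref{lem:sigmaX-mass-Hom} used the HN filtrations of $\sigma_X$. So we must set up $\sigma_X$ as a pre-stability condition without invoking the support property, which would otherwise make the argument circular. The first step above ensures this.
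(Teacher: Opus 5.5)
Your proposal is correct and is the paper's argument: the paper's proof is the one line combining Lemma \ref{lem:sigmaX-mass-Hom} with Theorem \ref{Cheng}. Your extra checks make explicit the hypotheses the paper leaves implicit, namely the pre-stability of $\sigma_X$ via \cite{MMS} (which avoids circularity) and that $Z_X$ factors through $\Knum(X)$ via Lemma \ref{lem:GRR}.
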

\begin{proof}
    Follows from Lemma \ref{lem:sigmaX-mass-Hom} and Theorem \ref{Cheng}.
\end{proof}

\begin{lemma}\label{lem:closure of UBG}
    When $\omega_X^2<n/8,$ $\sigma_X$ does not lie in the closure of $\Ubg=\{\,\sigma_{\omega',B'}\cdot g : \omega',B'\in\NS(X)_\R\text{ with $\omega'$ ample},\ g\in\widetilde{\mathrm{GL}}^+(2,\mathbb R)\,\}.$
\end{lemma}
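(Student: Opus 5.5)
The plan is to argue by contradiction at the level of central charges. The map $\ZZ\colon\Stab(X)\to\Hom(\Knum(X),\C)$ is continuous, and the $\widetilde{\GL}^+(2,\R)$-action on central charges factors through $\GL^+(2,\R)$. So if $\sigma_X\in\overline{\Ubg}$, there are ample $\omega_k$, classes $B_k\in\NS(X)_\R$ and matrices $A_k\in\GL^+(2,\R)$ with
\[
A_k\circ Z_{\omega_k,B_k}\longrightarrow Z_X=Z_{\omega_X,B^\sharp}-\tfrac{n}{16}\rk
\]
pointwise on $\Knum(X)$. The aim is to show that this forces $\omega_X^2-\tfrac n8\ge 0$, which fails by hypothesis.

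\textbf{Normalization.} Since $Z_{\omega_k,B_k}(\OO_x)=-1$ and $Z_X(\OO_x)=-1$ (as $\OO_x$ has phase $1$ in Theorem \ref{maintheorem}), the numbers $c_k:=-A_k(-1)$ tend to $1$. I will replace $A_k$ by $c_k^{-1}A_k$, where $c_k^{-1}$ acts by complex multiplication, which preserves orientation. The convergence to $Z_X$ is unaffected and now $A_k(1)=1$. An orientation-preserving real-linear map fixing $1$ has the form
\[
A_k(x+iy)=x+y(t_k+is_k)\quad\text{with } t_k\in\R,\ s_k>0.
\]

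\textbf{Comparing coefficients.} Write a class as $(\ch_0,\ch_1,\ch_2)$. Then
\[
Z_{\omega,B}=-\ch_2+B\cdot\ch_1+\tfrac12(\omega^2-B^2)\ch_0+i\,\omega\cdot(\ch_1-B\ch_0).
\]
Hence
\[
\Re\bigl(A_k Z_{\omega_k,B_k}\bigr)=-\ch_2+(B_k+t_k\omega_k)\cdot\ch_1+\Bigl(\tfrac{1+t_k^2}{2}\,\omega_k^2-\tfrac12(B_k+t_k\omega_k)^2\Bigr)\ch_0 .
\]
On the other side,
\[
\Re Z_X=-\ch_2+B^\sharp\cdot\ch_1+\Bigl(\tfrac12\omega_X^2-\tfrac n{16}-\tfrac12(B^\sharp)^2\Bigr)\ch_0 .
\]
Pointwise convergence on $\Knum(X)$ gives convergence of the linear functionals on its image in $H^{even}$. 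Testing against classes with $\ch_0=0$ (for example $\ch(\OO_C)$ for curves $C$, which span $\NS(X)_\R$) gives $B_k+t_k\omega_k\to B^\sharp$ in $\NS(X)_\R$. Then testing against $\ch(\OO_X)$ yields
\[
(1+t_k^2)\,\omega_k^2\longrightarrow \omega_X^2-\tfrac n8 .
\]
The left side is strictly positive because each $\omega_k$ is ample. So the limit is $\ge0$, contradicting $\omega_X^2<n/8$. The imaginary parts never need to be used.

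\textbf{Main obstacle.} The delicate point is justifying the normalization and the coefficient extraction, since nothing forces $A_k$, $\omega_k$ or $B_k$ to converge individually. I will handle this only through the convergent combinations $B_k+t_k\omega_k$ and $(1+t_k^2)\omega_k^2$, which are read off as values of the converging functionals on fixed classes ($\OO_x$, curve classes, $\OO_X$). One must also check that $\ch\colon\Knum(X)\to H^{*}$ has image spanning $\R\oplus\NS(X)_\R\oplus\R$, so that these test classes exist. The reformulation of $Z_X$ as a tilt-type charge whose ``$\omega^2$'' has been shifted by $-n/8$ is the key observation that makes the sign obstruction visible.
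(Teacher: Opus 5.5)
Your proof is correct, and it reaches the same obstruction as the paper by a different route.

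The paper does not normalize at all. It represents a central charge as $Z=\langle\Omega_Z,-\rangle$ under the Mukai pairing and uses the Gram determinant
\[
Q(Z)=\langle\Re\Omega_Z,\Re\Omega_Z\rangle\langle\Im\Omega_Z,\Im\Omega_Z\rangle-\langle\Re\Omega_Z,\Im\Omega_Z\rangle^2 .
\]
This is continuous and satisfies $Q(g\circ Z)=\det(g)^2Q(Z)$. Since $Q(Z_{\omega,B})=(\omega^2)^2>0$, one gets $Q\ge 0$ on $\ZZ(\overline{\Ubg})$, whereas $Q(Z_X)=(\omega_X^2-\tfrac n8)\,\omega_X^2<0$.

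You instead fix the $\GL^+(2,\R)$-freedom by imposing $Z(\OO_x)=-1$, and then read off a single real coefficient. Your quantity $(1+t_k^2)\omega_k^2$ is exactly $\langle\Re\Omega,\Re\Omega\rangle$ for the normalized charge $A_k\circ Z_{\omega_k,B_k}$. The same expression equals $\omega_X^2-\tfrac n8$ at $Z_X$. So you are using one factor of the paper's $Q$ in a fixed gauge, rather than the full gauge-invariant determinant.

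The paper's version is cleaner: it needs no sequences, no normalization and no test objects, and $Q$ is an intrinsic function on $\Hom(\Knum(X),\C)$. Yours is more elementary. It makes transparent that $Z_X$ is a tilt-type charge whose ``$\omega^2$'' has been shifted to $\omega_X^2-\tfrac n8$. It also does not need $\omega_X^2>0$, which the paper uses to read off the sign of $Q(Z_X)$.

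Two small points:
\begin{enumerate}
\item $Z_X(\OO_x)=-1$ follows from the formula $Z_X=Z_{\omega_X,B^\sharp}-\tfrac n{16}\rk$, not from $\OO_x$ having phase $1$.
\item You do not need the spanning statement for $\ch(\Knum(X))$, since you only evaluate on $\OO_x$, $\OO_C$ and $\OO_X$. What you do use is that curve classes span $\NS(X)_\R$ and that the intersection form is nondegenerate there; this is what gives $B_k+t_k\omega_k\to B^\sharp$.
\end{enumerate}
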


\begin{proof}
    For a surface $X,$ we know $\Knum(X)_\R\cong \R\oplus\NS(X)_\R\oplus\R$ with the Mukai pairing $\langle(r,c,s),(r',c',s')\rangle=c\cdot c'-rs'-r's.$ Since the pairing is nondegenerate, for every $Z\in\Hom(\Knum(X)_\R,\C)$ there exists a unique $\Omega_Z\in\Knum(X)_\R,$ such that,
    \[
    Z(-)=\langle\Omega_Z,-\rangle.
    \]
    Now consider the following function in $Z,$
    \[
    Q(Z)=\langle\Re\Omega_Z,\Re \Omega_Z \rangle\langle \Im\Omega_Z, \Im\Omega_Z\rangle-\langle\Re\Omega_Z,\Im\Omega_Z \rangle^2.
    \]
    This is continuous on $\Hom(\Knum(X)_\R,\C).$ Indeed, $Z\mapsto\Omega_Z$ and $\Omega\mapsto (\Re\Omega,\Im\Omega)$ are $\R$-linear, and the Mukai pairing is bilinear, so $Q$ is a homogeneous polynomial, thus continuous.
    
    For $Z_{\omega,B}$ we have $\Omega=e^{(B+i\omega)}= \bigl(1,B+i\omega,\tfrac{B^2-\omega^2}{2}+iB\cdot\omega\bigr).$ From this, one computes, $Q(Z_{\omega,B})=\omega^4>0,$ since $\omega$ is ample. For any $g\in\widetilde{\GL}_2^+(\R)$, we have $Q(g\circ Z)=\det(g)^2Q(Z).$ Hence $Q>0$ on $\Ubg$ and by continuity of $\Stab(X)\to \Hom(\Knum(X),\C),$ $Q\geq 0$ on $\overline{\UU_{BG}},$

    Now back to our case, $Z_X=Z_{\omega_X,B^\sharp}-\tfrac{n}{16}\rk$. One has $\Omega_{Z_X}=e^{B^\sharp+i\omega_X}+(0,0,\tfrac{n}{16}).$ This gives,
    \begin{align*}
        Q(Z_X) &=\langle\Re\Omega_{Z_X},\Re \Omega_{Z_X} \rangle \langle \Im\Omega_{Z_X}, \Im\Omega_{Z_X}\rangle-\langle\Re\Omega_{Z_X},\Im\Omega_{Z_X} \rangle=\left(\omega_X^2-\frac{n}{8}\right)\omega_X^2.
    \end{align*}

Since $\omega_X$ is nef and big, $\omega_X^2>0.$ Therefore whenever $\omega_X^2<n/8,$ $Q(Z_X)<0$, which implies $\sigma_X\notin\overline{\Ubg}.$
\end{proof}

\begin{lemma}\label{lem:mainlemma-path}
    Let $\sigma_X\in\Stab(X)$ and $H$ be an ample class on $X.$ Set
    \[
    W_t=Z_X+it(H\cdot c_1 - (H\cdot B^\sharp)\rk)+t\rk,
    \]
    for $t\in[0,t_1).$ There exist a $t_0\in (0,t_1]$ and a continuous path $(\tau_t)_{0\leq t<t_0}$ in $\Stab(X)$ with $\tau_0=\sigma_X$, central charge $W_t.$ Moreover, $\tau_t\in\Stabgeo(X)$ for $t\in(0,t_0).$
\end{lemma}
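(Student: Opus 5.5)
The path comes from Bridgeland's deformation theorem, and geometricity comes from checking skyscraper sheaves. By Corollary~\ref{cor:full-support}, $\sigma_X$ satisfies the support property with respect to $\Knum(X)$ for some quadratic form $Q_X$ that is negative definite on $\ker Z_X\otimes\R$. The family $W_t$ depends linearly on $t$, and $W_0=Z_X$. Bridgeland's deformation theorem, in the support-property form (cf.\ \cite{Bri-original}, \cite{Macri-Schmidt}), therefore gives an open neighbourhood of $Z_X$ in $\Hom(\Knum(X),\C)$ over which $\ZZ$ has a continuous section through $\sigma_X$. Concretely, this is the set of $W$ with $\|W-Z_X\|_{Q_X}$ small enough that $Q_X$ stays negative definite on $\ker W$. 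Choosing $t_0\le t_1$ so that $W_t$ lies in this neighbourhood for $t<t_0$, we lift the path to a continuous $(\tau_t)_{0\le t<t_0}$ with $\tau_0=\sigma_X$ and $Z_{\tau_t}=W_t$. This step is formal.

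For geometricity I would first compute the perturbation on the relevant classes. Since $\rk\OO_x=0$ and $c_1(\OO_x)=0$, we get $W_t(\OO_x)=Z_X(\OO_x)=-1$ for all $t$. Next I would classify the $\sigma_X$-stable factors of skyscrapers. By Theorem~\ref{maintheorem}, $\OO_x$ is $\sigma_X$-stable of phase $1$ for $x\notin\bigcup E_j$. For $x\in E_j$, its Jordan--H\"older factors are $\OO_{E_j}(-1)$ and $\OO_{E_j}(-2)[1]$, with classes $(0,E_j,0)$ and $(0,-E_j,1)$. For these factors,
\[
W_t(\OO_{E_j}(-1))=-\tfrac12+it\,H\cdot E_j,\qquad W_t(\OO_{E_j}(-2)[1])=-\tfrac12-it\,H\cdot E_j.
\]
Since $H\cdot E_j>0$, for small $t>0$ the phase of $\OO_{E_j}(-1)$ drops strictly below $1$ and the phase of $\OO_{E_j}(-2)[1]$ rises strictly above $1$. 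The subobject then has smaller phase than the quotient, so the short exact sequence no longer destabilizes $\OO_x$.

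The main step is to show that $\OO_x$ is actually $\tau_t$-stable, uniformly in $x$, for all small $t>0$. I would use the local finiteness of walls that follows from the support property. Skyscrapers form a bounded family of fixed class $v=\ch(\OO_x)$. In a small neighbourhood of $\sigma_X$, the walls for $v$ are locally finite, and any destabilizing subobject of some $\OO_x$ at $\tau_t$ must be $\sigma_X$-semistable of phase $1$ with class among finitely many classes occurring in $\sigma_X$-JH filtrations of skyscrapers. By the JH analysis above, the only such classes are $[\OO_{E_j}(-1)]$ and $[\OO_{E_j}(-2)[1]]$. For a proper destabilizing subobject of $\OO_x$ at $\tau_t$, its $\sigma_X$-JH factors must be among those of $\OO_x$. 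For $x\notin\bigcup E_j$ there are none, since $\OO_x$ is $\sigma_X$-stable and stability is open. For $x\in E_j$, the only candidate subobject is $\OO_{E_j}(-1)$, because $\Hom(\OO_{E_j}(-2)[1],\OO_x)=0$. This candidate has smaller $\tau_t$-phase than $\OO_x$, which rules it out. Hence all $\OO_x$ are $\tau_t$-stable of the common phase $1$ for $0<t<t_0$, after possibly shrinking $t_0$.

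I expect the uniform local-finiteness argument to be the main obstacle. It can be made precise with the standard lemma that stability of objects of a fixed class is open off finitely many walls near $\sigma_X$, applied to the compact family of skyscrapers. I would first try a direct deformation argument on the $t$-structure: for small $t$, $\tau_t$ has heart obtained from $\PP_X((0,1])$ by a small rotation, and one checks the JH filtration by hand.
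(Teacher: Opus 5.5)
Your proposal follows essentially the same route as the paper: you lift $W_t$ by the deformation theorem and compute $W_t$ on $\OO_x$, $\OO_{E_j}(-1)$ and $\OO_{E_j}(-2)[1]$. You then reduce any $\tau_t$-destabilizing subobject $M\subset\OO_x$ to one with $M$ and its quotient $N$ in $\PP_X(1)$, and conclude by the Jordan--H\"older analysis, using $\Hom(\OO_{E_j}(-2)[1],\OO_x)=0$ and $\Im W_t(\OO_{E_j}(-1))=t\,H\cdot E_j>0$. The step you defer to ``local finiteness of walls'' is exactly what the paper proves by hand, uniformly in $x$ (which pointwise openness of stability would not give): it takes the finite set $\Gamma$ of classes of $\tau_t$-mass at most $2$, chooses $t_0$ so that $\operatorname{sign}\Im W_t=\operatorname{sign}\Im Z_X$ on $\Gamma$, and uses $\phi^+_{\sigma_X}(M)\le 1\le\phi^-_{\sigma_X}(N)$ in the heart $\PP_X((\tfrac12,\tfrac32])$ to force $M,N\in\PP_X(1)$.
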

\begin{proof}
    The path $t\mapsto W_t$ is continuous in $\Hom(\Knum(X),\C)$ with $W_0=Z_X$. Since $\Stab(X)\to\Hom(\Knum(X),\C)$ is a local homeomorphism near $\sigma_X$, we get a $t_1>0$ and a continuous path $(\tau_t)_{0\leq t<t_1}$ in $\Stab(X)$ with $\tau_0=\sigma_X$ and central charge of $\tau_t$ is $W_t.$ Set $\tau_t=(W_t,\PP_t).$ We may assume (after shrinking) that $d(\sigma_X,\tau_t)<\tfrac{1}{8}$ for every $t<t_1,$ where $d(-,-)$ is the generalized metric on $\Stab(X)$ \cite[Prop. 8.1]{Bri-original}. This implies, for every $0\neq F$ and $t<t_1,$ we have
    \begin{equation}\label{metric-inequalities}
        |\phi^+_{\sigma_X}(F)-\phi^+_{\tau_t}(F)|<\tfrac{1}{8},\quad |\phi^-_{\sigma_X}(F)-\phi^-_{\tau_t}(F)|<\tfrac{1}{8},\quad e^{-1/8}<\tfrac{m_{\sigma_X}(F)}{m_{\tau_t}(F)}<e^{1/8}.
    \end{equation}

    Since $\ch(\OO_x)=(0,0,1),\ch(\OO_{E_j}(-1))=(0,E_j,0),$ and $\ch(\OO_{E_j}(-2)[1])=(0,-E_j,1),$ we have,
    \begin{equation}\label{central-charge-W_t}
        W_t(\OO_x)=-1,\qquad
  W_t(\OO_{E_j}(-1))=-\tfrac{1}{2}+it(H\cdot E_j),\qquad
  W_t(\OO_{E_j}(-2)[1])=-\tfrac{1}{2}-it(H\cdot E_j),
    \end{equation}
    where $H\cdot E_j>0$ since $H$ is ample. 
    
    Since $\OO_x$ is $\sigma_X$--semistable with phase $1$ (Theorem \ref{maintheorem}), $\phi_{\sigma_X}^+(\OO_x)=1= \phi_{\sigma_X}^-(\OO_x),$ and $m_{\sigma_X}(\OO_x)=|Z_{X}(\OO_x)|=1.$ By \ref{metric-inequalities},
    \[
    \frac{7}{8}<\phi_{\tau_t}^-(\OO_x)\leq\phi_{\tau_t}^+(\OO_x)<\frac{9}{8},\qquad m_{\tau_t}(\OO_x)<e^{1/8}<2.
    \]
    Thus every $\tau_t$-HN factor $A$ of $\OO_x$ is $\tau_t$--semistable of some phase $\psi\in \left(\tfrac{7}{8},\tfrac{9}{8}\right)$. By \ref{metric-inequalities}, $\phi^{\pm}_{\sigma_X}(A)\in (\psi-\tfrac{1}{8},\psi+\tfrac{1}{8})\subset (\tfrac{3}{4},\tfrac{5}{4}),$ i.e. $A\in \PP_X((\tfrac{3}{4},\tfrac{5}{4})).$ In particular, $\OO_x$ and its $\tau_t$--HN factors all lie in the heart $\AA:=\PP_X((\tfrac{1}{2},\tfrac{3}{2}])$. Notice, $\Im Z_X(E)=\Im Z_X(\OO_x)-\Im Z_X(M)=0$ by 

    Let $\Gamma\subset\Knum(X)$ be the set of all classes $[F]$ of objects $F\neq 0$ with $m_{\tau_t}(F)\leq 2$ for some $t<t_1.$ Support property implies $\Gamma$ is finite. Since $\Gamma$ is finite and $W_t\to Z_X$ as $t\to 0,$ there exists a $t_0\in(0,t_1]$ such that,
    \begin{equation}\label{def:t_0}
        \operatorname{sign} \Im W_t(\gamma)=\operatorname{sign} \Im Z_X(\gamma),\quad \text{for all $t\in(0,t_0)$ and all $\gamma\in\Gamma$ with $\Im Z_X(\gamma)\neq 0.$}
    \end{equation}
    Fix $t\in (0,t_0)$ and $x\in X.$ We claim that there's no short exact sequence as follows,
    \begin{equation}\label{destab-seq}
        0\to M\to\OO_x\to N\to 0
    \end{equation}
    with $M,N\neq 0,$ $m_{\tau_t}(M)<2$ and $\Im W_t(M)\leq 0.$ 
    
    Since $M\in\AA$, the $\sigma_X$--HN factors on $M$ also lie in $\AA$ Notice $\phi_{\sigma_X}^+(M)\leq 1$ and $\Im Z_X(M)\geq 0.$ Let $M_1\hookrightarrow M$ be the first step on the $\sigma_X$--HN filtration, so $M_1\in\PP_X(\phi_{\sigma_X}^+(M)).$ The composite $M_1\hookrightarrow M\hookrightarrow\OO_x$ is a monomorphism in $\AA$, hence nonzero. Since $\OO_x\in\PP_X(1)$, this forces $\phi^+_{\sigma_X}(M)\leq1$. Thus all the $\sigma_X$--HN factors of $M$ have phases in $(\tfrac{1}{2},1]$, i.e. the imaginary part under $Z_X$ is non-negative and $\Im Z_X(M)\geq 0.$

    Suppose $\Im Z_X(M)>0.$ Since $m_{\tau_t}(M)<2$, we have $[M]\in\Gamma$, thus \ref{def:t_0} implies $\Im W_t(M)>0,$ contradicting $\Im W_t\leq 0.$

    Now take the remaining case; let $\Im Z_X(M)=0.$ Since all the $\sigma_X$--HN factors of $M$ have phases in $(\tfrac{1}{2},1]$ and non-negative imaginary parts under $Z_X$ summing to $0,$ we conclude that they all have phase exactly $1.$ This proves $M\in\PP_X(1).$ We now prove that also $N\in\PP_X(1).$  Notice $\Im Z_X(N)=\Im Z_X(\OO_x)-\Im Z_X(M)=0$ by \ref{central-charge-W_t}. Let $N'\in\PP_X(\sigma_X^-(N))$ be the last $\sigma_X$--HN facor of $N$ and let $N\to N'$ be the projection, which is a quotient in $\AA.$ Since the composition $\OO_x\to N\to N'$ is an epimorphism in $\AA,$ it is non-zero, which implies $\phi_{\sigma_X}^-(N)\geq 1.$ Thus all the $\sigma_X$--HN factors of $N$ have phase in $[1,\tfrac{3}{2}].$ This means they have non--positive imaginary part under $Z_X.$ But since they sum to $\Im Z_X(N)=0,$ every factor has phase $1$ and $N\in \PP_X(1)$. 

    Therefore, \ref{destab-seq} is a short exact sequence in the abelian category in $\PP_X(1)$, since its a triangle with all three terms in $\PP_X(1).$ If $x\notin\cup_j E_j,$ then $\OO_x$ is simple in $\PP_X(1)$ by Theorem \ref{maintheorem}, a contradiction. On the other hand, if $x\in E_j$, then again by Theorem \ref{maintheorem}, $\OO_x$ has a length $2$ Joran--H\"older filtration. Thus, $\{M,N\}\cong\{\OO_{E_j}(-1),\OO_{E_j}(-2)[1]\}.$ Notice, $\Hom(\OO_{E_j}(-2)[1],\OO_x)=0.$ Thus, $M\cong \OO_{E_j}(-1).$ But then, 
    \[ 
    \Im W_t(M)=\Im W_t(\OO_{E_j}(-1))=t(H\cdot E_j)>0,
    \]
    again contradictory to $\Im W_t(M)\leq 0.$

    Now, fix $t\in (0,t_0)$ and $x\in X.$ Suppose $\OO_x$ is not $\tau_t$--semistable. Let $M\hookrightarrow\OO_x$ be the first step of the $\tau_t$--HN filtration and let $N$ be the cone of this map. Then as we saw above, $M$ and the remaining $\tau_t$--HN factors lie in $\PP_X((\tfrac{3}{4},\tfrac{5}{4}))\subset\AA.$ Since $N$ is an iterated extension of the remaining factors and $\AA$ is closed under extension, we get $N\in\AA.$ Therefore, the triangle $M\to\OO_x\to N$ has all terms in $\AA$ and is a short exact sequence in $\AA$ as \ref{destab-seq} with $M,N\neq 0.$ Moreover, $m_{\tau_t}(M)\leq m_{\tau_t}(\OO_x)<2$ and the $\tau_t$-phases of the HN factors lie in $(\tfrac{7}{8},\tfrac{9}{8})$ and are strictly decreasing. Notice, if the first phase were $\leq 1$, then all phases would lie in $(\tfrac{7}{8},1]$, so all factors would have non-negative imaginary parts under $W_t$. Since they all sum to $\Im W_t(\OO_x)=0$, all factors would have phase $1$, contradicting strict decrease in phases. Hence $\phi_{\tau_t}(M)\in(1,\tfrac98)$ and $\Im W_t(M)<0$, again a contradiction to the above discussion.

    Thus, $\OO_x$ is $\tau_t$--semistable with $W_t(\OO_x)=-1$, so $\phi_t(\OO_x)\in 1+2\Z$. Since $\phi_t(\OO_x)\in(\tfrac{7}{8},\tfrac{9}{8}),$ we get $\phi_t(\OO_x)=1,$ for every $x\in X.$ Suppose $\OO_x$ is strictly $\tau_t$--semistable. Then, by the support property, $\tau_t$ is locally finite. Thus $\PP_t(1)$ has finite length and we can choose a $\tau_t$--stable subobject $M\subsetneq \OO_x$ in $\PP_t(1)$ with quotient $0\neq N\in \PP_t(1).$ Then $W_t(M),W_t(N)\in\R_{<0}$, so $\Im W_t(M)=0$ and $m_{\tau_t}(M)=|W_{t}(M)|<|W_t(\OO_x)|=1<2.$ By \ref{metric-inequalities}, $M,N\in\PP_X((\tfrac{7}{8},\tfrac{9}{8}))\subset\AA$. Thus the triangle $M\to\OO_x\to N$ is a short exact sequence \ref{destab-seq} in $\AA,$ again contradicting the claim above. Thus every $\OO_x$ is $\tau_t$--stable of phase $1.$ This proves $\tau_t\in\Stabgeo(X)$ for all $t\in (0,t_0).$
    \end{proof}

\subsubsection*{Proof of Theorem \ref{thm:maintheorem2}}
We can now prove the theorem we promised at the start of this section.
\begin{enumerate}
    \item Using Lemma \ref{lem:mainlemma-path}, we have $\tau_t\in\Stabgeo(X)$ for $t\in(0,t_0)$ and $\tau_t\to\sigma_X$ as $t\to 0.$ Thus $\sigma_X\in\overline{ \Stabgeo(X)}.$
    
    \item From Lemma \ref{lem:closure of UBG} it follows that $\sigma_X\in\overline{\Ubg}\implies\omega_X^2\geq n/8.$ Conversely, assume $\omega_X^2\geq n/8$ and let $(\tau_t)$ be the path of Lemma \ref{lem:mainlemma-path}. Let $H_t=\omega_X+tH$ for $t\in(0,t_0),$ which is ample since $\omega_X$ is nef and $H$ is ample. Let $g_b\in\GL_2^+(\R)$ acts as $g_b(u+iv)=u+ibv.$ Now define,
    \[
    b_t=\left(\frac{\omega_X^2-n/8+2t}{H_t^2}\right)^{1/2}>0.
    \]
    Comparing the real and imaginary parts,
    \[
    W_t=g_{1/b_t}\,\circ\, Z_{b_tH_t,B^\sharp}.
    \]
    Let $\widetilde{g}_t \in\widetilde{\GL}_2^+ (\R)$ be the lift of $g_{1/b_t}.$ The stability condition $\tau_t\cdot \widetilde{g}_t$ has central charge $Z_{b_tH_t,B^\sharp}$ with $b_tH_t$ ample, has the same semistable objects as $\tau_t$, so every skyscraper sheaf is $\tau_t\cdot\widetilde{g}_t$-stable of phase $1$. We get $\tau_t\cdot\widetilde{g}_t =\sigma_{b_tH_t,B^\sharp}$, i.e.
    \[
    \tau_t= \sigma_{b_tH_t,B^\sharp} \cdot \widetilde{g}_t^{-1}\in \Ubg,\qquad\text{ for }0<t<t_0,
    \]
    by Prop. \ref{geometric-stab}. Thus $\sigma_X\in\overline{\Ubg}.$
\end{enumerate}

\qed

\begin{remark}[Comparison with \cite{Chou,Tramel-Xia},]
The stability conditions $\sigma_X$ should be compared with the ones constructed by Tramel--Xia \cite{Tramel-Xia} and Chou \cite{Chou}. The two constructions are not exactly the same. For $x\in E_j$, the Jordan--H\"older factors of $\OO_x$ are $\OO_{E_j}(-1)$ and $\OO_{E_j}(-2)[1]$ for $\sigma_X$ (Theorem \ref{maintheorem}), but $\mathcal{O}_{E_j}$ and $\mathcal{O}_{E_j}(-1)[1]$ in  \cite[Proof of Lemma 8.9]{Chou}. Notice that the spherical twist along $\mathcal{O}_{E_j}(-1)$ sends the second pair to the first, and acts on $K_{\mathrm{num}}(X)$, which turns the class
$B^\sharp=B_X+\frac{1}{4}\sum_j E_j$ in the definition of $Z_X$ to $B_X-\frac{1}{4}\sum_j E_j$, which looks like the classes appearing in the central charges of \cite[\S4]{Chou}. This suggests that $\sigma_X$ should be related to the stability conditions of \cite{Chou} by these spherical twists. We plan to return to this question soon in future work.
\end{remark}

\bibliographystyle{alpha}
\bibliography{mainbib}

\end{document}